\newtheorem{Theorem}{Theorem}[section]
\newtheorem{Lemma}[Theorem]{Lemma}
\newtheorem{Proposition}[Theorem]{Proposition}
\newtheorem{Corollary}[Theorem]{Corollary}
\newtheorem{Example}[Theorem]{Example}
\newtheorem{Remark}[Theorem]{Remark}
\newtheorem{Definition}[Theorem]{Definition}
\newtheorem{Notation}[Theorem]{Notation}
\newtheorem*{Theorem A}{Theorem A}
\newtheorem*{Theorem B}{Theorem B}
\newtheorem*{Theorem C}{Theorem C}
\newcommand*{\overbar}[1]{\mkern 1.5mu\overline{\mkern-1.5mu#1\mkern-1.5mu}\mkern 1.5mu}
\begin{document}
\author{Karin Baur}
\address{School of Mathematics, University of Leeds, Leeds, LS2 9JT, United Kingdom}
\address{On leave from the University of Graz}
\email{pmtkb@leeds.ac.uk}
\author{Charlie Beil}
\address{Institut f\"ur Mathematik und Wissenschaftliches Rechnen, Universit\"at Graz, Heinrichstrasse 36, 8010 Graz, Austria.}
 \email{charles.beil@uni-graz.at}
\title[A general.\ of cancellative dimer algebras to hyperbolic surfaces]{A generalization of cancellative dimer algebras to hyperbolic surfaces}
 \keywords{Dimer algebra, hyperbolic surface, non-noetherian ring, noncommutative algebraic geometry.}
 \subjclass[2010]{16S38,16S50,16G20}
 \date{}

\maketitle

\begin{abstract}
We study a new class of quiver algebras on surfaces, called `geodesic ghor algebras'. 
These algebras generalize cancellative dimer algebras on a torus to higher genus surfaces, where the relations come from perfect matchings rather than a potential. 
Although cancellative dimer algebras on a torus are noncommutative crepant resolutions, the center of any dimer algebra on a higher genus surface is just the polynomial ring in one variable, and so the center and surface are unrelated.
In contrast, we establish a rich interplay between the central geometry of geodesic ghor algebras and the topology of the surface in which they are embedded.
Furthermore, we show that noetherian central localizations of such algebras are endomorphism rings of modules over their centers.
\end{abstract}

\section{Introduction}

Cancellative dimer algebras on a torus have been extensively studied in the contexts of noncommutative resolutions, Calabi-Yau algebras, and stability conditions, e.g., \cite{Br,D,B2}.
It is well known that every cancellative dimer algebra on a torus is a noncommutative crepant resolution, and every three dimensional affine toric Gorenstein singularity admits a noncommutative crepant resolution given by such a dimer algebra.
However, if the dimer algebra is on a surface of genus $g \geq 2$, then these nice properties disappear: the center of such a dimer algebra is simply the polynomial ring in one variable, and so there can be no interesting interactions
between the topology of the surface and the algebras central geometry and representation theory.

In this article we consider special quotients of dimer algebras, called `ghor algebras'.
A ghor algebra is a quiver algebra whose quiver embeds in a surface, with relations determined by the perfect matchings of its quiver (the precise definition is given in Section \ref{results}).
Ghor algebras were introduced in \cite{B1,B4} to study non-cancellative dimer algebras on a torus.\footnote{In \cite{B4}, we called ghor algebras `homotopy algebras' because their relations are homotopy relations on the paths in the quiver when the surface is a torus.
However, in the higher genus case homologous cycles are also identified (see Theorem \ref{iff} below), and therefore the name `homotopy' is less suitable for general surfaces.  The word `ghor' is Klingon for surface.}
We introduce a special property of certain ghor algebras called `geodesic' in Section \ref{results}; on a torus, a ghor algebra is geodesic if and only if it is a cancellative dimer algebra.
On higher genus surfaces, we will show that noetherian localizations of geodesic ghor algebras remain endomorphism rings of modules over their centers, but new features arise.
The purpose of this article is to show that geodesic ghor algebras exhibit a rich interplay between their algebraic properties, central geometry, and the topology of the surface in which they are embedded.

In the following, let $\Sigma$ be a surface obtained from a regular $2N$-gon $P$ by identifying the opposite sides, and vertices, of $P$.
Our first main result gives an explicit description of the center of a geodesic ghor algebra $A = kQ/\operatorname{ker}\eta$ on $\Sigma$.

\begin{Theorem A} (Corollary \ref{injective}.)
The center of a geodesic ghor algebra $A$ on $\Sigma$ is isomorphic to the ring
\begin{equation} \label{R def}
R = k\left[ \cap_{i \in Q_0} \bar{\tau}(e_iAe_i) \right],
\end{equation}
where $\bar{\tau}$ is a particular algebra homomorphism to the polynomial ring generated by the perfect matchings of $Q$ (see Section \ref{results}).
\end{Theorem A}

In Section \ref{ncg} we will show that if $\Sigma$ is hyperbolic, then the centers of geodesic ghor algebras on $\Sigma$ are usually nonnoetherian, in stark contrast to the case where $\Sigma$ is a torus.
We can nevertheless view such a center as the coordinate ring on a geometric space, using the framework of nonnoetherian geometry introduced in \cite{B3}.
In short, the geometry of a nonnoetherian coordinate ring of finite Krull dimension looks just like a finite type algebraic variety, except that it has some positive dimensional closed points (that is, positive dimensional subvarieties with no interior points).
The original motivation for this framework was to make sense of the central geometry of non-cancellative dimer algebras on a torus (see \cite{B2,B5}), and is based on the following definition.

\begin{Definition} \label{depiction definition} \rm{
A \textit{depiction} of an integral domain $k$-algebra $R$ is a finitely generated overring $S$ such that the morphism
$$\operatorname{Spec}S \to \operatorname{Spec}R, \ \ \ \mathfrak{q} \mapsto \mathfrak{q} \cap R,$$
 is surjective, and
\begin{equation} \label{U}
U_{S/R} := \{ \mathfrak{n} \in \operatorname{Max}S \ | \ R_{\mathfrak{n} \cap R } = S_{\mathfrak{n}} \} = \{ \mathfrak{n} \in \operatorname{Max}S \ | \ R_{\mathfrak{n} \cap R} \text{ is noetherian} \} \not = \emptyset.
\end{equation}
}\end{Definition}

For example, the algebra $S = k[x,y]$ is a depiction of its nonnoetherian subalgebra $R = k + xS$.
We thus view $\operatorname{Max}R$ as the variety $\operatorname{Max}S = \mathbb{A}^2_k$, except that the line $\{ x = 0 \}$ is identified as a $1$-dimensional (closed) point of $\operatorname{Max}R$.
In particular, the complement $\{x \not = 0\} \subset \mathbb{A}^2_k$ is the `noetherian locus' $U_{S/(k+xS)}$ \cite[Proposition 2.8]{B3}.
\newpage
In Sections \ref{ncg} and \ref{Krull section} we show the following.

\begin{Theorem B} (Theorems \ref{depiction theorem}, \ref{KZA}.)
Let $A$ be a geodesic ghor algebra with center $R$ on the surface $\Sigma$.
Consider the ring generated by the union of the $\bar{\tau}$-images,
\begin{equation} \label{S def}
S := k\left[ \cup_{i \in Q_0} \bar{\tau}(e_iAe_i) \right];
\end{equation}
we call $S$ the `cycle algebra' of $A$.
The following holds.
\begin{enumerate}
\item If the center $R$ is nonnoetherian, then $R$ is depicted by the cycle algebra $S$.
\item The Krull dimensions of the center $R$ and cycle algebra $S$ coincide, and equal one plus half the number of sides of the fundamental polygon $P$:
\begin{equation*}
\operatorname{dim}R = \operatorname{dim}S = N + 1.
\end{equation*}
In particular, if $\Sigma$ is a smooth genus $g \geq 0$ surface, then
$$\operatorname{dim}R = \operatorname{rank}H_1(\Sigma) + 1 = 2g + 1.$$
\end{enumerate}
\end{Theorem B}

Finally, in Section \ref{endo section} we show that the noetherian locus $U_{S/R} \subset \operatorname{Max}S$ is intimately related to the endomorphism ring structure of $A$.

\begin{Theorem C} (Theorem \ref{endo}.)
Let $A$ be a geodesic ghor algebra with center $R$ on the surface $\Sigma$.
At each point $\mathfrak{m} \in \operatorname{Max}R$ for which the localization $R_{\mathfrak{m}}$ is noetherian, the localization $A_{\mathfrak{m}} := A \otimes_R R_{\mathfrak{m}}$ is an endomorphism ring over its center: for each $i \in Q_0$, we have
$$A_{\mathfrak{m}} \cong \operatorname{End}_{R_{\mathfrak{m}}}(A_{\mathfrak{m}}e_i).$$
The locus of such points forms the noetherian locus, and lifts to an open dense subset of the algebraic variety $\operatorname{Max}S$.
\end{Theorem C}

\begin{figure}
$$\begin{array}{ccccc}
\includegraphics[scale=.63]{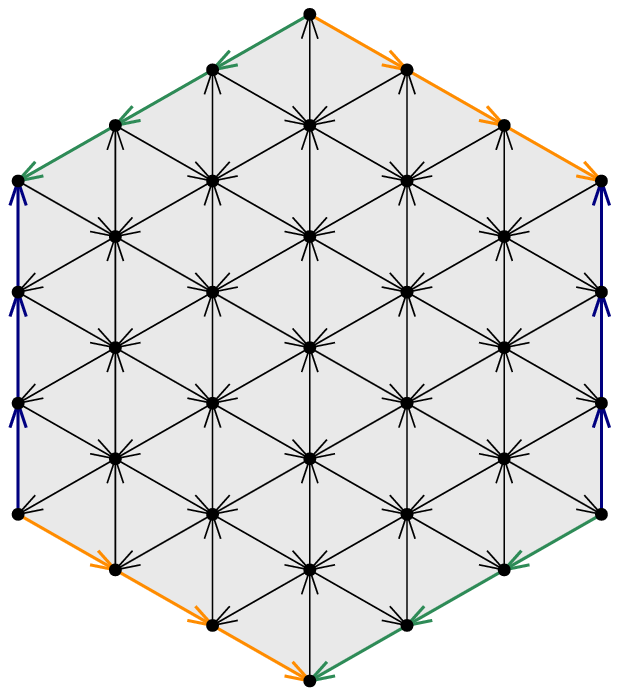}
& \ &
\includegraphics[scale=.63]{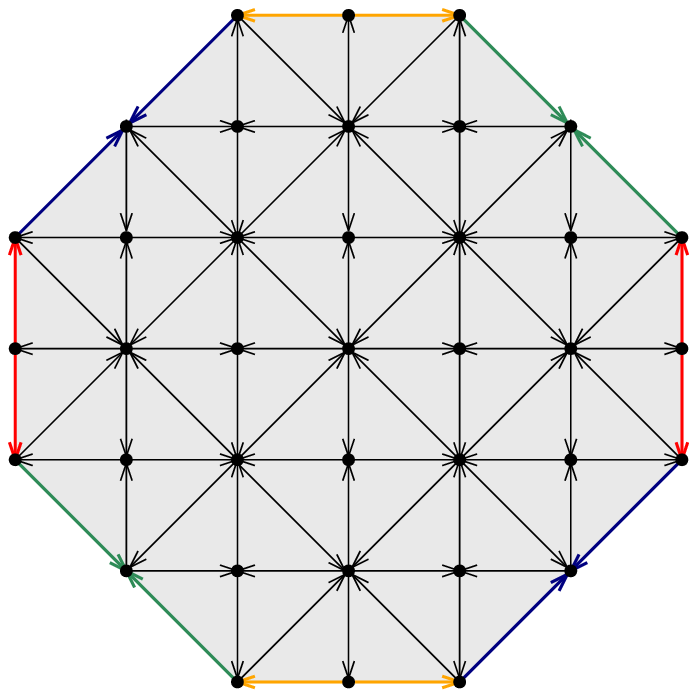}
& \ &
\includegraphics[scale=.63]{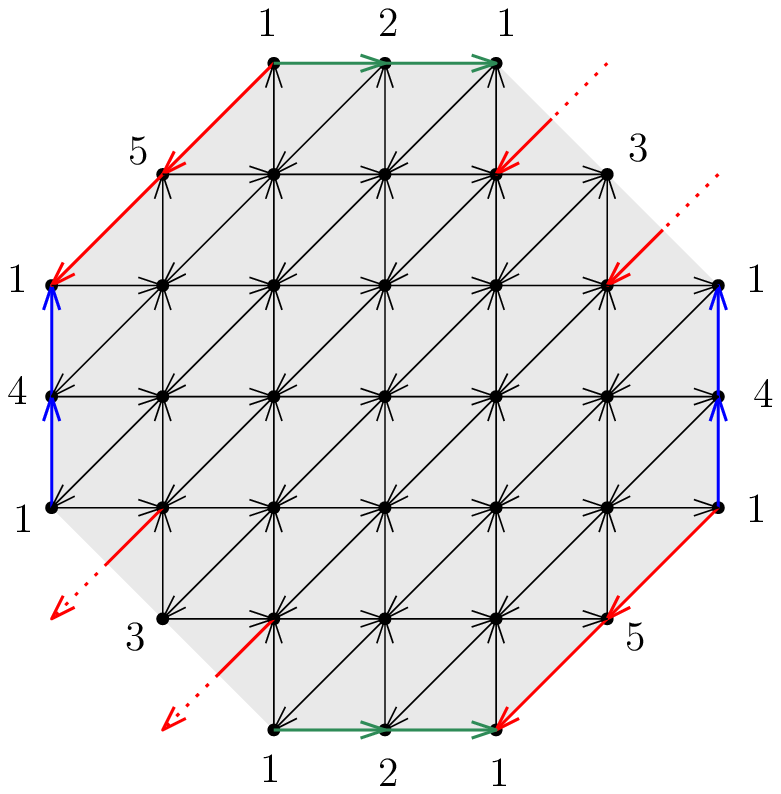}
\\ %\ \\
(i) & & (ii) & & (iii)
\end{array}$$
\caption{Examples of geodesic ghor algebras.
Opposite sides of the polygons are identified.
The ghor algebra (i) is on a pinched torus, and the ghor algebras (ii) and (iii) are on a smooth genus $2$ surface.
The centers of (i) and (ii) are given explicitly in \cite[Sections 3.2, 3.3]{BB}.}
\label{examples}
\end{figure}

\begin{figure}
\includegraphics[width=10cm]{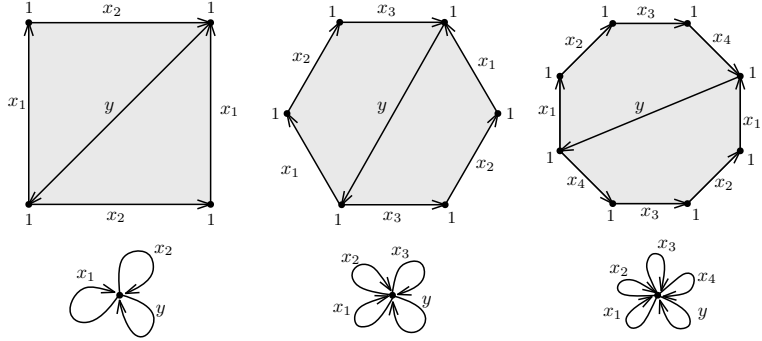}
\caption{The polynomial ghor algebras $A = k[x_1,x_2,y]$, $A = k[x_1,x_2,x_3,y]$, and $A = k[x_1,x_2,x_3,x_4,y]$.}
\label{fig:polynomial}
\end{figure}

\begin{figure}
\includegraphics[width=10cm]{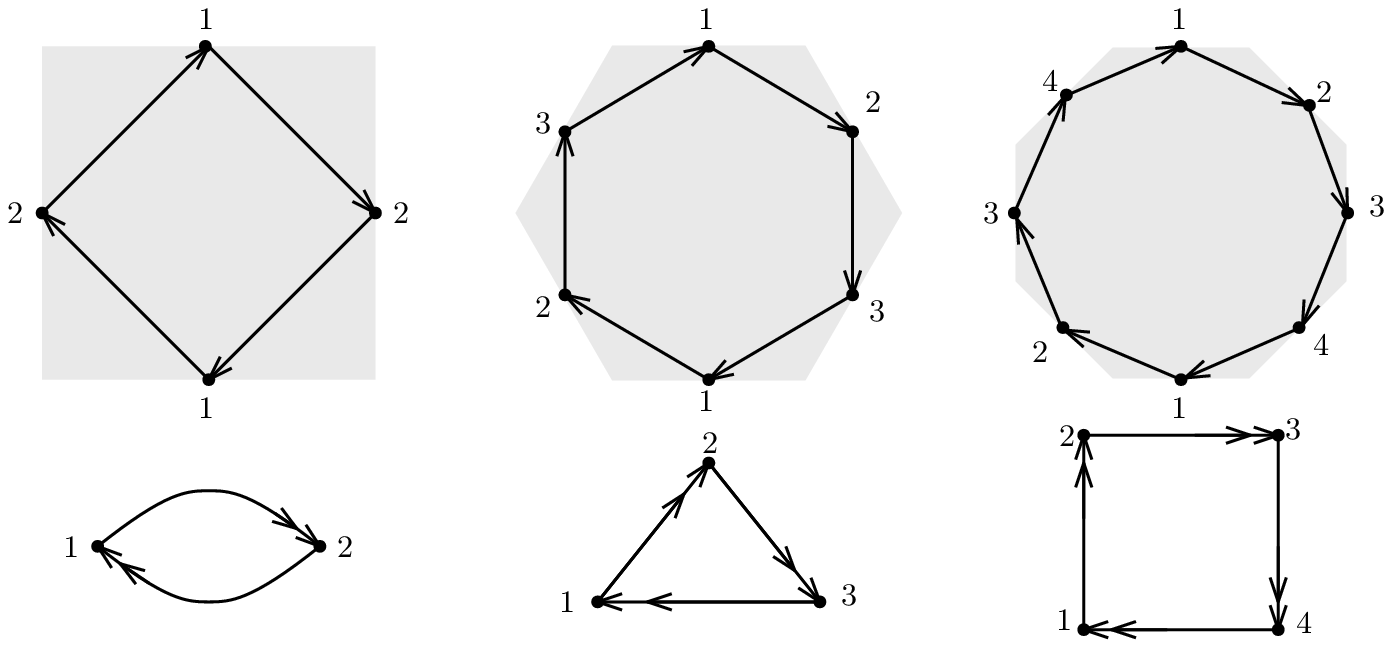}
\caption{A generalization of the conifold dimer algebra on a torus (shown on the left) to geodesic ghor algebras on genus $g$ surfaces.
The grey polygon region is the fundamental domain $P$ of the surface.
These geodesic ghor algebras are noetherian, satisfy $R = S$ and $\mathcal{S} = \mathcal{P}$, and are the only known noetherian ghor algebras with more than one vertex in the case $g \geq 2$.}
\label{conifold}
\end{figure}

\section{Preliminary definitions} \label{results}

\begin{Notation} \rm{
Throughout, $k$ is an uncountable algebraically closed field.
We denote by $\operatorname{Spec}S$ and $\operatorname{Max}S$ the prime ideal spectrum (or scheme) and maximal ideal spectrum (or affine variety) of $S$, respectively.
Given a quiver $Q$, we denote by $kQ$ the path algebra of $Q$; by $Q_{\ell}$ the paths of length $\ell$; by $\operatorname{t}, \operatorname{h}: Q_1 \to Q_0$ the tail and head maps; and by $e_i$ the idempotent at vertex $i \in Q_0$.
By \textit{cyclic subpath} of a path $p$, we mean a subpath of $p$ that is a nontrivial cycle.
We denote by $[n]$ the set $\{1, 2, \ldots, n\}$.
Finally, we denote by $e_{ij} \in M_n(k)$ the $n \times n$ matrix with a $1$ in the $ij$-th slot and zeros elsewhere.
}\end{Notation}

In this article we consider surfaces $\Sigma$ that are obtained from a regular convex $2N$-gon $P$, $N \geq 2$, by identifying the opposite sides, and vertices, of $P$.
This class of surfaces includes all smooth orientable compact closed connected genus $g \geq 1$ surfaces.
Specifically,
\begin{itemize}
 \item if $P$ is a $4g$-gon, then $\Sigma$ is a smooth genus $g$ surface; and
 \item if $P$ is a $2(2g+1)$-gon, then $\Sigma$ is a genus $g$ surface with a pinched point.
\end{itemize}
The polygon $P$ is then a fundamental polygon for $\Sigma$.

If $N = 2$, then $\Sigma$ is a torus, and the covering space of $\Sigma$ is the plane $\mathbb{R}^2$.
For $N \geq 3$, the covering space of $\Sigma$ is the hyperbolic plane $\mathbb{H}^2$.
The hyperbolic plane may be represented by the interior of the unit disc in $\mathbb{R}^2$, where straight lines in $\mathbb{H}^2$ are segments of circles that meet the boundary of the disc orthogonally.
In the covering, the hyperbolic plane is tiled with regular $2N$-gons, with $2N$ such polygons meeting at each vertex.
In this case, $\Sigma$ is said to be a hyperbolic surface.

\begin{Definition} \label{definitions} \rm{ 
A \textit{dimer quiver} on $\Sigma$ is a quiver $Q$ whose underlying graph $\overbar{Q}$ embeds in $\Sigma$, such that each connected component of $\Sigma \setminus \overbar{Q}$ is simply connected and bounded by an oriented cycle, called a \textit{unit cycle}.

\begin{itemize}
 \item[-] A \textit{perfect matching} of a dimer quiver $Q$ is a set of arrows $x \subset Q_1$ such that each unit cycle contains precisely one arrow in $x$.
 \item[-] A perfect matching $x$ is called \textit{simple} if $Q \setminus x$ contains a cycle that passes through each vertex of $Q$ (equivalently, $Q \setminus x$ supports a simple $kQ$-module of dimension vector $(1,1, \ldots, 1)$).
\end{itemize}

Denote by $\mathcal{P}$ and $\mathcal{S}$ the set of perfect and simple matchings of $Q$, respectively.
We will consider the polynomial rings $k[\mathcal{P}]$ and $k[\mathcal{S}]$ generated by these matchings.
\textit{Throughout, we assume that each arrow of $Q$ is contained in a perfect matching.}
}\end{Definition}

Let $Q$ be a dimer quiver on a surface $\Sigma$. 
In the following, we define the ghor algebra and dimer algebra of $Q$.

\begin{Definition} \rm{\ \\
\indent $\bullet$ Consider the algebra homomorphism
$$\eta: kQ \to M_{|Q_0|}\left(k[\mathcal{P}]\right)$$
defined on the vertices $i \in Q_0$ and arrows $a \in Q_1$ by
$$\eta(e_i) = e_{ii}, \ \ \ \ \ \ \ \ \eta(a) = e_{\operatorname{h}(a),\operatorname{t}(a)} \prod_{x \in \mathcal{P}: \, x \ni a} x,$$
and extended multiplicatively and $k$-linearly to $kQ$.
The \textit{ghor algebra} of $Q$ is the quotient
$$A := kQ/\operatorname{ker}\eta.$$

We will also consider the algebra homomorphism
$$\tau: kQ \to M_{|Q_0|}\left(k[\mathcal{S}]\right)$$
similarly defined on the vertices $i \in Q_0$ and arrows $a \in Q_1$ by
$$\tau(e_i) = e_{ii}, \ \ \ \ \ \ \ \ \tau(a) = e_{\operatorname{h}(a),\operatorname{t}(a)} \prod_{x \in \mathcal{S} : \, x \ni a} x.$$
That is, $\tau$ is obtained from $\eta$ by setting all the non-simple variables equal to $1$.

$\bullet$ The \textit{dimer algebra} of $Q$ is the quotient of $kQ$ by the ideal
$$I = \left\langle p - q \ | \ \exists a \in Q_1 \text{ such that } pa, qa \text{ are unit cycles} \right\rangle \subset kQ,$$
where $p,q$ are paths.
A dimer algebra $kQ/I$ is said to be \textit{non-cancellative} if there are paths $p,q,r \in kQ/I$ such that $p \not = q$, but
$$rp = rq \not = 0 \ \ \text{ or } \ \ pr = qr \not = 0;$$
otherwise $kQ/I$ is \textit{cancellative}. 
}\end{Definition}

A ghor algebra $A = kQ/\operatorname{ker}\eta$ is the quotient of the dimer algebra $kQ/I$ since $I \subseteq \operatorname{ker}\eta$: if $pa, qa$ are unit cycles with $a \in Q_1$, then
$$\eta(p) = e_{\operatorname{h}(p),\operatorname{t}(p)} \prod_{x \in \mathcal{P}: \, x \not \ni a} x = \eta(q).$$ 
Dimer algebras on non-torus surfaces have been considered in the context of, for example, cluster categories \cite{BKM,K}, Belyi maps \cite{BGH}, and gauge theories \cite{FGU,FH}.

%Recall that a cycle $\gamma: S^1 \to \Sigma$ (with connected image) is null homologous if either $\gamma$ is the constant map, or $\Sigma$ becomes disconnected by cutting along $\gamma(S^1)$.

\begin{Notation} \rm{
Let $\pi: \Sigma^+ \to \Sigma$ be the projection from the covering space $\Sigma^+$ (here, $\mathbb{R}^2$ or the hyperbolic plane $\mathbb{H}^2$) to the surface $\Sigma$.
Denote by $Q^+ := \pi^{-1}(Q) \subset \Sigma^+$ the (infinite) covering quiver of $Q$, and by $p^+$ the lift of a path $p$ to $Q^+$.
}\end{Notation}

\begin{Definition} \rm{
We say a homotopy $H: \overline{Q} \times [0,1] \to \Sigma$ of the underlying graph $\overline{Q}$ of $Q$ is \textit{dimer-preserving} if for all $t \in [0,1]$, $H(-,t): \overline{Q} \to \Sigma$ is an embedding, and each connected component of $\Sigma \setminus H(\overline{Q},t)$ is simply connected and bounded by unit cycle.
We will often omit the overline and simply write $H: Q \times [0,1] \to \Sigma$.

Fix a tiling of the covering space of $\Sigma$ by fundamental polygons $P$, and label the sides $1, 2, \ldots, {2N}$ of $P$ in counterclockwise order, with indices taken modulo $2N$.
Let $H: Q \times [0,1] \to \Sigma$ be a dimer-preserving homotopy for which all the vertices of $H(Q,1)$ lift to vertices that lie in the interior of $P$, and no arrow intersects a corner vertex $v$ of $P$:
\begin{equation} \label{H(Q,1)}
H(Q_0,1) \cap \pi(\partial P) = \emptyset, \ \ \ \ \ H(Q_1,1) \cap \pi(v) = \emptyset.
\end{equation}
We say a path $p$ \textit{transversely intersects} side $k$ of $P$ with respect to $H$ if $p$ intersects $k$ transversely in $H(Q,1)$.

Given a cycle $c$, we define the \textit{class} of $c$ to be
$$[c] := \sum_{k \in [N]} (n_k - n_{k+N}) (\delta_{k \ell})_{\ell} \in \mathbb{Z}^N,$$
where $n_k$, $k \in [2N]$, is the number of times $c$ transversely intersects side $k$ of $P$.
If $\Sigma$ is smooth (that is, if $N$ is even), then $[c]$ is the homology class of $c$ in $H_1(\Sigma) := H_1(\Sigma, \mathbb{Z})$.
}\end{Definition}

We introduce the following special class of ghor algebras which generalizes cancellative dimer algebras on a torus.

\begin{Definition} \label{geocom} \rm{ \
\begin{itemize}
 \item A cycle $p \in A$ is \textit{geodesic} if the lift to $Q^+$ of each cyclic permutation of each representative of $p$ does not have a cyclic subpath.
 \item Two cycles are \textit{parallel} if they do not transversely intersect.
 \item A ghor algebra is \textit{geodesic} if for each $k \in [2N]$, there is a geodesic cycle $\gamma_k$ with class
\begin{equation} \label{geodesic cycle def}
[\gamma_k] =  (\delta_{k \ell} - \delta_{k+N,\ell})_{\ell \in [N]} \in \mathbb{Z}^N,
\end{equation}
with indices modulo $2N$, and a set of pairwise parallel geodesic cycles
$$\{c_i \in e_i kQ e_i\}_{i \in Q_0}$$
such that $c_{\operatorname{t}(\gamma_k)} = \gamma_k$.
\end{itemize}
}\end{Definition}

\begin{Remark} \rm{
We note that if $\Sigma$ is hyperbolic, then for fixed $k \in [N]$, the parallel geodesic cycles $c_i$ will in general be in different homology classes; in particular, $[c_i]$ need not equal $[\gamma_k]$.
However, if $\Sigma$ is flat (that is, $N = 2$), then for fixed $k \in [2]$, the parallel geodesic cycles $c_i$ may be chosen to be in the same homology class.
}\end{Remark}

\begin{Remark} \rm{
Consider the ghor algebra $A$ with quiver $Q$ on a genus $2$ surface given in Figure \ref{examples}.ii.
Label the sides of $P$ by $1, \ldots, 8$, starting with the top side and continuing counterclockwise around $P$.
Observe that $A$ is geodesic.
However, there is no geodesic cycle at the vertex in the center of $P$ that intersects both sides $1$ and $7$ transversely.
Thus, it is too restrictive to require that there is a geodesic cycle at each vertex in each homology class of $\Sigma$; see also Remark \ref{curve}.
}\end{Remark}

For $i,j \in Q_0$, consider the $k$-linear maps
$$\bar{\eta}: e_jkQe_i \to k[\mathcal{P}] \ \ \ \ \text{ and } \ \ \ \ \bar{\tau}: e_jkQe_i \to k[\mathcal{S}]$$
defined by sending $p \in e_jkQe_i$ to the single nonzero matrix entry of $\eta(p)$ and $\tau(p)$ respectively; that is,
$$\eta(p) = \bar{\eta}(p)e_{ji} \ \ \ \ \text{ and } \ \ \ \ \tau(p) = \bar{\tau}(p)e_{ji}.$$
These maps define multiplicative labelings of the paths of $Q$.
We will often write $\overbar{p}$ for $\bar{\eta}(p)$ or $\bar{\tau}(p)$.

An important monomial is the $\bar{\eta}$- and $\bar{\tau}$-images of each unit cycle in $Q$, namely
$$\sigma_{\mathcal{P}} := \prod_{x \in \mathcal{P}}x \ \ \ \ \text{ and } \ \ \ \ \sigma_{\mathcal{S}} := \prod_{x \in \mathcal{S}}x.$$
We will omit the subscript $\mathcal{P}$ or $\mathcal{S}$ if it is clear from the context.

Since unit cycles $\sigma_i$ are contractible curves on $\Sigma$, the topology of $\Sigma$ is more closely reflected in the quotient rings
\begin{equation} \label{quotient}
k[\mathcal{P}]/(\sigma_{\mathcal{P}} -1) \ \ \ \ \text{ and } \ \ \ \ k[\mathcal{S}]/(\sigma_{\mathcal{S}} -1).
\end{equation}
If polynomials $g,h$ are equal in the quotient, that is, if there is an $\ell \in \mathbb{Z}$ such that $g = h \sigma^{\ell}$, then we will write
$$g \stackrel{\sigma}{=} h.$$

\begin{Notation} \rm{
Given paths $p,q$, we write $\bar{\eta}(p) \mid \bar{\eta}(q)$ (resp.\ $\bar{\tau}(p) \mid \bar{\tau}(p)$) if $\bar{\eta}(p)$ divides $\bar{\eta}(q)$ in $k[\mathcal{P}]$ (resp.\ $\bar{\tau}(p)$ divides $\bar{\tau}(q)$ in $k[\mathcal{S}]$).
}\end{Notation}

\section{A bridge from topology to algebra: subdivisions and simple matchings}

Let $A = kQ/ \ker \eta$ be a ghor algebra.

\begin{Lemma} \label{Z}
\
\begin{itemize}
 \item[(i)] If $p^+$ is a cycle in $Q^+$, that is, $p = \pi(p^+)$ is a contractible cycle, then
$$\bar{\eta}(p) \stackrel{\sigma}{=} 1 \ \ \ \text{ and } \ \ \ \bar{\tau}(p) \stackrel{\sigma}{=} 1.$$
 \item[(ii)] If $p^+$, $q^+$ are paths in $Q^+$ satisfying
\begin{equation} \label{t and h}
\operatorname{t}(p^+) = \operatorname{t}(q^+) \ \ \ \text{ and } \ \ \ \operatorname{h}(p^+) = \operatorname{h}(q^+),
\end{equation}
then
$$\bar{\eta}(p) \stackrel{\sigma}{=} \bar{\eta}(q) \ \ \ \text{ and } \ \ \ \bar{\tau}(p) \stackrel{\sigma}{=} \bar{\tau}(q).$$
\end{itemize}
\end{Lemma}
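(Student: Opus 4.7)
The plan is to combine (i) and (ii) into a single homological argument. Observe that $\bar{\eta}(p) = \prod_{x \in \mathcal{P}} x^{\nu_x(p)}$, where $\nu_x(p)$ denotes the number of arrows of $x$ traversed by $p$; similarly $\bar{\tau}(p) = \prod_{x \in \mathcal{S}} x^{\nu_x(p)}$. The relation $\bar{\eta}(p) \stackrel{\sigma}{=} \bar{\eta}(q)$ is therefore equivalent to the integer $\nu_x(p) - \nu_x(q)$ being independent of $x \in \mathcal{P}$, and analogously for (i) and for $\bar{\tau}$.

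I will work in the CW structure on $\Sigma^+$ whose $1$-skeleton is $Q^+$ and whose $2$-cells are the lifted faces of $Q$. Because $\Sigma^+$ is simply connected (indeed contractible), the cellular complex $C_\bullet$ has $H_1 = 0$, so every $1$-cycle is a $2$-boundary. A directed path in $Q^+$ is naturally a $1$-chain (arrows counted with multiplicity); in case (i) the cycle $p^+$ is itself a $1$-cycle, and in case (ii) the hypothesis (\ref{t and h}) makes $p^+ - q^+$ a $1$-cycle. Either way,
\[
p^+ = \sum_f n_f\, \partial_2(f) \quad\text{or}\quad p^+ - q^+ = \sum_f n_f\, \partial_2(f)
\]
for some integers $n_f$, summed over the lifted faces.

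Extending $\nu_x$ linearly to $C_1$, the argument concludes once I verify that $\nu_x(\partial_2 f)$ is independent of $x$ for each face $f$: since the boundary of $f$ is a unit cycle, every arrow of $\partial f$ is traversed in its quiver orientation, so $\partial_2(f) = \epsilon_f \sum_{a \in \partial f} a$ for a common sign $\epsilon_f \in \{\pm 1\}$; by the perfect matching condition exactly one such arrow belongs to $x$, whence $\nu_x(\partial_2 f) = \epsilon_f$, independent of $x$. Applying $\nu_x - \nu_{x'}$ to the displayed equations then yields $\nu_x(p)$ constant in $x$ (case (i)) or $\nu_x(p) - \nu_x(q)$ constant in $x$ (case (ii)); writing this common value as $m$ gives $\bar{\eta}(p) = \sigma_{\mathcal{P}}^m$ (resp.\ $\bar{\eta}(p)/\bar{\eta}(q) = \sigma_{\mathcal{P}}^m$), and the same proof with $\mathcal{S}$ in place of $\mathcal{P}$ handles $\bar{\tau}$. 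The main subtlety is verifying the uniform sign $\epsilon_f$ on $\partial_2(f)$, which is precisely where the dimer-quiver hypothesis that each face boundary is an oriented quiver cycle is used; without it, $\nu_x$ would fail to take an $x$-independent value on the boundary of a face and the homological cancellation would break down.
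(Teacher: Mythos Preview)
Your argument is correct and takes a genuinely different route from the paper's.  The paper proves (i) by induction on the number of faces enclosed by $p^+$: it factors $p$ into maximal subpaths $p_j$ of unit cycles, replaces each by its complementary arc $r_j$ lying inside the region, and observes that the resulting inner cycle $r = r_1\cdots r_m$ satisfies $\bar{\eta}(r)\bar{\eta}(p) = \sigma^m$; the inductive hypothesis gives $\bar{\eta}(r) = \sigma^n$, whence $\bar{\eta}(p) = \sigma^{m-n}$.  Part (ii) is then reduced to (i) by closing up $p^+$ and $q^+$ with a common return path.  Your proof replaces this layer-by-layer peeling with a single appeal to $H_1(\Sigma^+;\mathbb{Z}) = 0$: the $1$-cycle $p^+$ (or $p^+ - q^+$) is a cellular $2$-boundary, and the linear functional $\nu_x - \nu_{x'}$ vanishes on every face boundary because each unit cycle meets every perfect matching exactly once.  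The homological version is more conceptual, treats (i) and (ii) uniformly, and makes transparent that the statement is really a consequence of simple connectivity of the cover together with the perfect-matching axiom; the paper's inductive argument is more elementary in that it never names homology, effectively carrying out the same boundary computation by hand one shell of faces at a time.  Both arguments rest on the identical combinatorial input you isolated, namely that $\nu_x(\partial f)$ is independent of $x$.
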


\begin{proof}
(i) We proceed by induction on the number of faces contained in the region $\mathcal{R}_p$ bounded by $p^+$.
Factor $p$ into a minimum number of subpaths
$$p = p_m \cdots p_2p_1,$$
where each $p_j$ is a subpath of a unit cycle.
For each $j \in [m]$, let $r_j$ be the path for which $r_jp_j$ is a unit cycle and $r_j^+$ lies in $\mathcal{R}_p$.
Since $m$ is minimum, the concatenation
$$r = r_1 \cdots r_{m-1}r_m$$
is a cycle whose lift $r^+$ lies in the region $\mathcal{R}_p$.

Without loss of generality, we may assume that at least one $p_j$ is not a unit cycle, hence at least one $r_j$ is not a vertex.
Thus, by the induction hypothesis, there is an $n \geq 0$ such that
$$\bar{\eta}(r) = \sigma_{\mathcal{P}}^n.$$
Furthermore,
$$\bar{\eta}(r) \bar{\eta}(p) = \prod_j \bar{\eta}(r_j) \prod_j \bar{\eta}(p_j) = \prod_j \bar{\eta}(r_jp_j) = \sigma_{\mathcal{P}}^m.$$
Therefore
$$\bar{\eta}(p) = \sigma_{\mathcal{P}}^{m-n}.$$
Similarly, $\bar{\tau}(p) = \sigma_{\mathcal{S}}^{m-n}$.

(ii) Suppose $p^+$, $q^+$ are paths in $Q^+$ satisfying (\ref{t and h}).
Let $r^+$ be a path in $Q^+$ from $\operatorname{h}(p^+)$ to $\operatorname{t}(p^+)$.
Then by Claim (i), there is an $m, n \geq 0$ such that
$$\bar{\eta}(r)\bar{\eta}(p) = \bar{\eta}(rp) = \sigma_{\mathcal{P}}^m \ \ \ \text{ and } \ \ \ \bar{\eta}(r)\bar{\eta}(q) = \bar{\eta}(rq) = \sigma_{\mathcal{P}}^n.$$
Whence
$$\bar{\eta}(p) = \bar{\eta}(q) \sigma_{\mathcal{P}}^{m-n}.$$
Similarly, $\bar{\tau}(p) = \bar{\tau}(q) \sigma_{\mathcal{S}}^{m-n}$.
\end{proof}

\begin{Lemma}
In the quotient rings (\ref{quotient}), every path $p \in e_jAe_i$ has an inverse $q \in e_iAe_j$:
$$\overbar{p} \overbar{q} \stackrel{\sigma}{=} \overbar{q} \overbar{p} \stackrel{\sigma}{=} 1.$$
We will write $\overbar{p}^{-1} := \overbar{q}$.
\end{Lemma}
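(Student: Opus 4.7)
The plan is to construct an inverse $q$ explicitly by using the unit cycle containing each arrow of $p$ as a local ``inverse piece,'' then concatenating these pieces in the correct order and invoking Lemma \ref{Z}(i).

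Write $p = a_n a_{n-1} \cdots a_1$ as a concatenation of arrows (if $p$ is a vertex the claim is trivial). For each $k \in [n]$, fix a unit cycle $c_k$ containing $a_k$ and factor $c_k = r_k a_k$, so that $r_k$ is the path from $\operatorname{h}(a_k)$ back to $\operatorname{t}(a_k)$ completing $a_k$ to $c_k$. Since $\operatorname{h}(r_{k+1}) = \operatorname{t}(a_{k+1}) = \operatorname{h}(a_k) = \operatorname{t}(r_k)$, the concatenation
\[
q := r_1 r_2 \cdots r_n
\]
is a well-defined path with $\operatorname{t}(q) = \operatorname{h}(a_n) = j$ and $\operatorname{h}(q) = \operatorname{t}(a_1) = i$, so $q \in e_i kQ e_j$ as required.

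Next, show that $qp$ is a contractible cycle at $i$ by examining its lift. Choose a lift $v_0^+$ of $i$ in $Q^+$, and lift $p$ starting there, letting $v_k^+$ denote the head of the lift of $a_k$. Because each unit cycle $r_k a_k$ is contractible, its lift based at $v_{k-1}^+ = \operatorname{t}(a_k)^+$ is a closed loop passing through $v_k^+$; hence the lift of $r_k$ starting at $v_k^+$ terminates precisely at $v_{k-1}^+$. Iterating, the lift of $qp$ traces out
\[
v_0^+ \xrightarrow{a_1^+} v_1^+ \xrightarrow{a_2^+} \cdots \xrightarrow{a_n^+} v_n^+ \xrightarrow{r_n^+} v_{n-1}^+ \xrightarrow{r_{n-1}^+} \cdots \xrightarrow{r_1^+} v_0^+,
\]
a closed cycle in $Q^+$. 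Therefore $qp$ is contractible, and Lemma \ref{Z}(i) gives $\overline{\eta}(q)\,\overline{\eta}(p) = \overline{\eta}(qp) \stackrel{\sigma_{\mathcal{P}}}{=} 1$. Commutativity of $k[\mathcal{P}]$ then yields $\overline{p}\,\overline{q} \stackrel{\sigma}{=} \overline{q}\,\overline{p} \stackrel{\sigma}{=} 1$.

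The argument for $\overline{\tau}$ is identical: since every simple matching is a perfect matching, each unit cycle still contains exactly one arrow of each simple matching, so $\overline{\tau}(c_k) = \sigma_{\mathcal{S}}$ and the same lifting argument, combined with Lemma \ref{Z}(i) for $\overline{\tau}$, gives $\overline{\tau}(qp) \stackrel{\sigma_{\mathcal{S}}}{=} 1$. There is no real obstacle in the proof; the only item needing care is checking that the successive lifts of the $r_k$'s close up to the right preimages, which follows immediately from the fact that each unit cycle lifts to a loop at every one of its preimages.
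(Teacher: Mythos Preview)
Your proof is correct and follows essentially the same approach as the paper: construct a path $q$ from $\operatorname{h}(p)$ back to $\operatorname{t}(p)$ whose lift runs from $\operatorname{h}(p^+)$ to $\operatorname{t}(p^+)$, so that $(qp)^+$ is a closed cycle in $Q^+$, and then apply Lemma \ref{Z}(i). The only difference is cosmetic: the paper simply asserts the existence of such a $q$ (any path in $Q^+$ with the prescribed endpoints will do), whereas you build one explicitly by threading back through the complementary arcs $r_k$ of unit cycles---which incidentally justifies the existence the paper takes for granted.
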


\begin{proof}
Fix a path $p$, and let $q$ be any path satisfying
$$\operatorname{t}(q^+) = \operatorname{h}(p^+) \ \ \ \text{ and } \ \ \ \operatorname{h}(q^+) = \operatorname{t}(p^+).$$
Then $(pq)^+$ is a cycle in $Q^+$.
Consequently, $\overbar{p} \overbar{q} = \overbar{pq} \stackrel{\sigma}{=} 1$ by Lemma \ref{Z}.i.
\end{proof}

In the following we describe an algebraic feature of ghor algebras that is a consequence of the curvature of $\Sigma$.

\begin{Remark} \rm{
If $A$ is a geodesic ghor algebra on a torus, and $i,j \in Q_0^+$ are distinct vertices, then there is always a path $p^+$ from $i$ to $j$ in $Q^+$ such that $\sigma \nmid \overbar{p}$ \cite[Proposition 4.20.iii]{B1}.
However, this implication no longer holds if the surface $\Sigma$ is not flat.
Indeed, in this case there are always distinct vertices $i, j \in Q_0^+$ for which every path $p^+$ from $i$ to $j$ satisfies $\overbar{p} = \sigma^{\ell}$ for some $\ell \geq 1$; see \cite[Remark 2.7]{BB}.
}\end{Remark}

\begin{Proposition} \label{how}
Suppose $p,q$ are cycles in the same class, $[p] = [q]$.
Then $\overbar{p} \stackrel{\sigma}{=} \overbar{q}$.
\end{Proposition}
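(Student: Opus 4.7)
The plan is to reformulate the conclusion $\overbar{p} \stackrel{\sigma}{=} \overbar{q}$ as a statement about arrow counts against the matchings, and then to show that the relevant count is a cohomological invariant determined by $[\cdot]$. Write $\bar{\eta}(p) = \prod_{x \in \mathcal{P}} x^{n_x(p)}$, where $n_x(p) := |\{a \in p : a \in x\}|$ is the number of arrows of $p$ lying in the matching $x$, and similarly for $q$. The desired equality amounts to $n_x(p) - n_x(q)$ being independent of $x \in \mathcal{P}$, equivalently to $n_x - n_y$ agreeing on $p$ and $q$ for every pair $x,y \in \mathcal{P}$. The analogous reduction for $\bar{\tau}$ is the same with $\mathcal{S}$ in place of $\mathcal{P}$, since each simple matching is by definition a perfect matching.

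The main step is cohomological. View $Q \subset \Sigma$ as a CW decomposition whose 1-cells are the arrows and whose 2-cells are the unit-cycle faces. The indicator function $\chi_x : Q_1 \to \mathbb{Z}$ of a perfect matching $x$ is a 1-cochain, and by the defining property of $x$ (exactly one arrow per unit cycle) its coboundary $d\chi_x$ takes the constant value $1$ on every 2-cell. Hence $\chi_x - \chi_y$ is closed, defines a class in $H^1(\Sigma,\mathbb{Z})$, and its evaluation on any 1-cycle $c$ equals $n_x(c) - n_y(c)$. By the standard pairing $H^1 \times H_1 \to \mathbb{Z}$, this quantity therefore depends only on the class of $c$ in $H_1(\Sigma, \mathbb{Z})$.

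In the smooth case ($N$ even), the paper's definition identifies $[p]$ with the $H_1$-class of $p$, so the hypothesis $[p] = [q]$ forces $n_x(p) - n_y(p) = n_x(q) - n_y(q)$ for all $x, y$. Setting $\ell := n_x(p) - n_x(q)$, which is now independent of $x$, gives $\bar{\eta}(p) = \sigma_{\mathcal{P}}^{\ell}\,\bar{\eta}(q)$, proving the claim for $\bar{\eta}$; the identical argument with $\mathcal{S}$ in place of $\mathcal{P}$ delivers the $\bar{\tau}$-statement.

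The main obstacle is the pinched case ($N$ odd), where $\Sigma$ is not a manifold and the paper does not directly identify $[p]$ with an $H_1$-class. The plan there is to verify that the intersection vector $(n_k - n_{k+N})_{k \in [N]}$ still determines the value of the pairing $\int_c(\chi_x - \chi_y)$. Concretely, one expresses $[\chi_x - \chi_y] \in H^1(\Sigma,\mathbb{Z})$ as a $\mathbb{Z}$-linear combination of the Poincar\'e duals of the $N$ identified-side curves of $P$ and reads off the pairing from the intersection data encoded in $[p]$; the delicate part is bookkeeping near the identified corners of $P$, where standard Poincar\'e duality can fail. A fallback is a Lemma~\ref{Z}.i-style induction that peels off unit-cycle faces to prove directly that any cycle $c$ with $[c] = 0$ satisfies $\bar{\eta}(c) \stackrel{\sigma}{=} 1$, which then yields the general case by a suitable combination of $p$ with a path inverse to $q$ in the sense of the inverse lemma preceding the statement.
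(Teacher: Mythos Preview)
Your approach is genuinely different from the paper's and, in the smooth case, cleaner: the observation that $\chi_x-\chi_y$ is a cellular $1$-cocycle is correct and immediately gives that $n_x(c)-n_y(c)$ depends only on the class of $c$ in $H_1(\Sigma)$. Since in the smooth case the paper identifies $[c]$ with the homology class, you are done. The paper instead argues entirely at the level of the cover $Q^+$: it factors $p$ at each boundary-crossing arrow, replaces each segment by a standard cycle $r_j$ through a fixed vertex, uses $[p]=[q]$ to cancel pairs $r_j r_{j'}$ crossing opposite sides, permutes the remaining $r_j$'s so that the resulting cycle $r$ has the same \emph{ordered} side-crossing sequence $\operatorname{ord}(r)=\operatorname{ord}(q)$, and then invokes Lemma~\ref{Z} because equal ordered crossing sequences force equal endpoints in $Q^+$. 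Your method trades this explicit bookkeeping for one line of cohomology; the paper's method is more hands-on but uniform in $N$.

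The gap is in the pinched case. Your argument needs that $[p]=[q]$ forces $p$ and $q$ to be homologous, i.e.\ that the crossing cocycles $\beta_k$ span $H^1(\Sigma;\mathbb{Z})$. You flag this but do not prove it, and neither proposed fix is complete. The Poincar\'e-duality plan is exactly the step in question and is delicate near the non-manifold point. The fallback --- a ``Lemma~\ref{Z}.i-style induction that peels off unit-cycle faces'' to show $[c]=0\Rightarrow\overbar c\stackrel{\sigma}{=}1$ --- does not go through as stated: a cycle with $[c]=0$ need not lift to a cycle in $Q^+$ (the deck transformation is only trivial \emph{in the abelianization}), so there is no bounded region on which to run the face-peeling induction of Lemma~\ref{Z}.i. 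What actually closes this gap is precisely the paper's device of reordering the $r_j$'s to match $\operatorname{ord}(q)$, which works at the level of the full (nonabelian) crossing word rather than its abelianization; your fallback would end up reproducing that argument rather than avoiding it.
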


\begin{proof}
Fix a tiling of the covering space by fundamental polygons $P$, and a dimer-preserving homotopy $H(Q,1) \hookrightarrow \Sigma$ satisfying (\ref{H(Q,1)}).
For a cycle $p$ in $Q$ (not modulo $\ker \eta$), denote by $\operatorname{ord}(p)$ the sequence of sides of $P$ intersected by the arrow subpaths of $p$ in $H(Q,1)$ in order,
$$\operatorname{ord}(p) := ( j(1), j(2), \ldots, j(m) ).$$

If $p$ and $q$ both do not intersect the boundary $\pi(\partial P)$, then by Lemma \ref{Z}.ii,
$$\overbar{p} \stackrel{\sigma}{=} 1 \stackrel{\sigma}{=} \overbar{q}.$$

So suppose $p$ (and thus $q$) intersects $\pi(\partial P)$.
Consider the ordering of $p$ and $q$ with respect to $H(Q,1)$,
$$\operatorname{ord}(p) = ( j(1), j(2), \ldots, j(m) ) \ \ \ \text{ and } \ \ \ \operatorname{ord}(q) = ( j'(1), j'(2), \ldots, j'(n) ).$$
Without loss of generality, we may assume $m \geq n$.

Factor $p$ into paths $p = d_2ad_1$, where $d_2$ has minimal length such that $a$ is an arrow that intersects the boundary $\pi(\partial P)$.
Consider the cyclic permutation $p' := ad_1d_2$.
Then $\overbar{p}' = \overbar{p}$.
Furthermore, since $d_2$ has minimal length, we have $\operatorname{ord}(p) = \operatorname{ord}(p')$.
It therefore suffices to assume that the leftmost arrow subpath of $p$ intersects $\pi(\partial P)$.
Under this assumption, we may factor $p$ into paths
$$p = a_m p_{m} \cdots a_2p_2a_1p_1,$$
where each $a_k \in Q_1$ is an arrow that intersects $\pi(\partial P)$, and each $p_k \in Q_{\geq 0}$ is a path that does not intersect $\pi(\partial P)$.

Fix $i \in Q_0$.
For each $k \in [m]$ (modulo $m$), consider a path\footnote{By `path', we always mean a possibly trivial path unless stated otherwise.} $s_{k-1}$ from $\operatorname{h}(a_{k-1})$ to $i$, and a path $t_k$ from $i$ to $\operatorname{t}(a_k)$, both of which lift to paths that lie in the interior of $P$; see Figure \ref{prop-3-5}.
By Lemma \ref{Z}.ii, we have
$$ \overbar{t}_k \overbar{s}_{k-1} \stackrel{\sigma}{=} \overbar{p}_k.$$
Thus,
\begin{equation} \label{k=1}
\overbar{p} = \prod_{k = 1}^{m} \overbar{a}_k \overbar{p}_k \stackrel{\sigma}{=} \prod_{k =1}^{m} \overbar{a}_k \overbar{t}_k \overbar{s}_{k-1}.
\end{equation}

Set 
\begin{equation} \label{r_k}
r_k := s_ka_kt_k \in e_ikQe_i.
\end{equation}
Since $p$ and $q$ are in the same class $u := [p] = [q] \in \mathbb{Z}^N$ and $m \geq n$, we can pair off $m - n$ of the $m$ $r_k$ cycles, say $r_{\ell}$, $r_{\ell'}$, such that $a_{\ell}$ and $a_{\ell'}$ intersect the same side of $P$ but in opposite directions.
Consequently, the lift of the concatenation $r_{\ell}r_{\ell'}$ is a cycle in $Q^+$.
Thus $r_{\ell}r_{\ell'} \stackrel{\sigma}{=} 1$, by Lemma \ref{Z}.i.
Therefore, up to a factor of $\sigma$, we may omit these $m-n$ cycles from $\overbar{p}$:
$$\overbar{p} \stackrel{\sigma}{=} \prod_{k =1}^m \overbar{r}_k \stackrel{\sigma}{=} \prod_{\kappa = 1}^n \overbar{r}_{k(\kappa)}.$$
It thus suffices to suppose $m = n$.

Consider a permutation $\varphi$ of $[m-1]$ for which $j(\varphi(k)) = j'(k)$.
The cycle
\begin{equation} \label{r=}
r := t_m r_{\varphi(m-1)}r_{\varphi(m-2)} \cdots r_{\varphi(2)}r_{\varphi(1)} s_m
\end{equation}
then satisfies
\begin{equation} \label{gu}
\operatorname{ord}(r) = \operatorname{ord}(q).
\end{equation}
By (\ref{k=1}) and (\ref{r=}), we have
\begin{equation} \label{r sigma =}
\overbar{r} \stackrel{\sigma}{=} \overbar{p}.
\end{equation}

Let $\alpha$ be path from $\operatorname{t}(q)$ to $\operatorname{t}(p) = \operatorname{t}(r)$, and let $\beta$ be a path from $\operatorname{t}(p)$ to $\operatorname{t}(q)$, such that the lift $(\alpha \beta)^+$ lies in a single fundamental polygon $P$.
In particular, $(\alpha \beta)^+$ is a cycle in $Q^+$.
Thus $\overbar{\alpha \beta} \stackrel{\sigma}{=} 1$, by Lemma \ref{Z}.i.
Furthermore, (\ref{gu}) implies that
$$\operatorname{ord}(r) = \operatorname{ord}(q) = \operatorname{ord}(\alpha q \beta).$$
Whence,
$$\operatorname{t}((\alpha q \beta)^+) = \operatorname{t}(r^+) \ \ \ \text{ and } \ \ \ \operatorname{h}((\alpha q \beta)^+) = \operatorname{h}(r^+).$$
Thus
\begin{equation} \label{q sigma =}
\overbar{r} \stackrel{\sigma}{=} \overbar{\alpha q \beta} = \overbar{q} \overbar{\alpha \beta} \stackrel{\sigma}{=} \overbar{q},
\end{equation}
by Lemma \ref{Z}.ii.
Therefore, by (\ref{r sigma =}) and (\ref{q sigma =}),
$$\overbar{p} \stackrel{\sigma}{=} \overbar{r} \stackrel{\sigma}{=} \overbar{q},$$
which is what we wanted to show.
\end{proof}

\begin{figure}
\includegraphics[width=4.5cm]{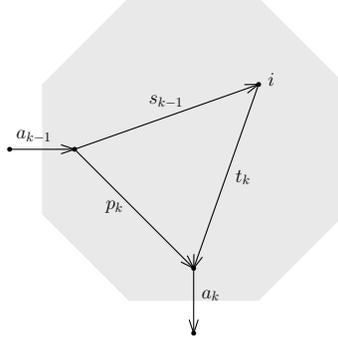}
\caption{Setup for Proposition \ref{how}.}
\label{prop-3-5}
\end{figure}

The following definitions connect the topology of the surface $\Sigma$ with the simple matchings of $Q$, and thus with the algebraic structure of the ghor algebra $A$.

\begin{Definition} \rm{
We call the subquiver given in Figure \ref{figure3}.i a \textit{column}, and the subquiver given in Figure \ref{figure3}.ii a \textit{pillar}.
A \textit{subdivision} of $Q$ is a set $F$ of columns and pillars such that each arrow either (i) lies in the interior of at most one column or pillar; or (ii) belongs to the boundary of a column or pillar.
}\end{Definition}

The following lemma is immediate.

\begin{Lemma} \label{M}
Suppose $A = kQ/\operatorname{ker}\eta$ is geodesic.
Then for each $k \in [2N]$, the set of parallel geodesic cycles $\{c_i\}_{i \in Q_0}$ containing $\gamma_k$ determines a subdivision of $Q$.
\end{Lemma}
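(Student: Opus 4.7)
The plan is essentially definitional: I propose to take the subdivision $F$ to be the collection of regions cut out of $\Sigma$ by the embedded $1$-complex $\Gamma := \bigcup_{i \in Q_0} \mathrm{im}(c_i)$, and then to verify the two axioms arrow by arrow. Since $c_i \in e_i kQ e_i$ there is one cycle passing through every vertex of $Q$, so $Q_0 \subset \Gamma$. The parallel hypothesis says no two of the $c_i$ transversely intersect, so $\Gamma$ is honestly embedded in $\Sigma$ and its complement decomposes into a disjoint union of open regions.

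First I would check the subdivision axioms directly. For each arrow $a \in Q_1$ there are exactly two possibilities: either $a$ occurs as an arrow-subpath of some $c_i$, in which case $a \subset \Gamma$ lies on the boundary of a region of $F$ (giving condition (ii)); or $a$ occurs in no $c_i$, in which case its two endpoints lie on $\Gamma$ but its interior lies in the complement, and by parallelism its interior cannot cross any $c_j$, so it is contained in the interior of a single region of $F$ (giving condition (i)). There is no third possibility, since $\Gamma$ is one-dimensional and every vertex of $Q$ lies on it.

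It then remains to identify each complementary region with a column or a pillar as depicted in Figure \ref{figure3}. This is a local analysis: the boundary of a region consists of one or two arcs of $\Gamma$, and the arrows of $Q$ that enter the interior must go from one boundary arc to another (or from the sole boundary arc back to itself). Combined with the fact that every face of $Q$ is a unit cycle and that the $c_i$ are geodesic (so no lift has a cyclic subpath), this pins the local model down to exactly the column case when the region is bounded by two distinct parallel cycle-arcs, and to the pillar case when the region is bounded by a single self-adjacent cycle-arc.

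The only place where anything could go wrong is in this last enumeration of local models, and that is precisely the reason the lemma is stated as "immediate" once the definitions of column, pillar, geodesic, and parallel are in hand: the parallel condition excludes transverse crossings inside a region, and the geodesic condition together with the unit-cycle condition excludes degenerate or self-folded regions. Thus no local shape other than the two in Figure \ref{figure3} can occur, and $F$ is indeed a subdivision of $Q$.
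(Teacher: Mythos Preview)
Your proposal is correct and is essentially what the paper intends: the paper gives no proof at all, simply declaring the lemma ``immediate'' from the definitions, and your write-up is a careful unpacking of precisely that immediacy. One small comment: your appeal to parallelism to rule out an interior arrow crossing some $c_j$ is unnecessary, since arrows are edges of $Q$ and the $c_j$ are paths in $Q$, so an arrow's interior meets $\Gamma$ only if the arrow is itself a subpath of some $c_j$; the parallel hypothesis is really only used to ensure the boundary paths of each region are two non-crossing geodesic arcs, which is what forces the column/pillar dichotomy.
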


\begin{figure}
$$\begin{array}{c}
\xy  0;/r.4pc/:
(-30,2.6)*{\cdot}="1";(-24,2.6)*{\cdot}="2";(-18,2.6)*{\cdot}="3";(-12,2.6)*{\cdot}="4";
(-6,2.6)*{\cdot}="5";
(3,2.6)*{\cdot}="6";(9,2.6)*{\cdot}="7";(15,2.6)*{\cdot}="8";(21,2.6)*{\cdot}="9";
(27,2.6)*{\cdot}="10";
(-27,-2.6)*{\cdot}="11";(-21,-2.6)*{\cdot}="12";(-15,-2.6)*{\cdot}="13";
(-9,-2.6)*{\cdot}="14";(-3,-2.6)*{\cdot}="15";
(6,-2.6)*{\cdot}="16";(12,-2.6)*{\cdot}="17";(18,-2.6)*{\cdot}="18";
(24,-2.6)*{\cdot}="19";(30,-2.6)*{\cdot}="20";
{\ar@[blue]"1";"2"};{\ar@[blue]"2";"3"};{\ar@[blue]"3";"4"};{\ar@[blue]"4";"5"};
{\ar@{.}@[blue]"5";"6"};{\ar@[blue]"6";"7"};{\ar@[blue]"7";"8"};{\ar@[blue]"8";"9"};
{\ar@[blue]"9";"10"};
{\ar@[red]"11";"12"};{\ar@[red]"12";"13"};{\ar@[red]"13";"14"};{\ar@[red]"14";"15"};
{\ar@[red]@{.}"15";"16"};{\ar@[red]"16";"17"};{\ar@[red]"17";"18"};{\ar@[red]"18";"19"};
{\ar@[red]"19";"20"};
{\ar@[brown]"20";"10"};{\ar"10";"19"};{\ar"19";"9"};{\ar"9";"18"};{\ar"18";"8"};{\ar"8";"17"};{\ar"17";"7"};{\ar"7";"16"};{\ar"16";"6"};
{\ar"15";"5"};{\ar"5";"14"};{\ar"14";"4"};{\ar"4";"13"};{\ar"13";"3"};{\ar"3";"12"};
{\ar"12";"2"};{\ar"2";"11"};{\ar@[brown]"11";"1"};
\endxy
\\
\text{a column subquiver}\\
\\
\xy  0;/r.4pc/:
(-15,2.6)*{}="0";(-12,2.6)*{\cdot}="1";(-6,2.6)*{\cdot}="2";
(3,2.6)*{\cdot}="3";(9,2.6)*{\cdot}="4";(15,2.6)*{\cdot}="5";
(-18,0)*{\cdot}="6";
(-15,-2.6)*{\cdot}="7";(-9,-2.6)*{\cdot}="8";(-3,-2.6)*{\cdot}="9";
(6,-2.6)*{\cdot}="10";(12,-2.6)*{\cdot}="11";
(15,-2.6)*{}="12";
(18,0)*{\cdot}="13";
{\ar@[blue]@/^.3pc/@{-}"6";"0"};{\ar@[blue]"0";"1"};{\ar@[blue]"1";"2"};
{\ar@{.}"2";"3"};{\ar@[blue]"3";"4"};{\ar@[blue]"4";"5"};
{\ar@[blue]@/^.3pc/"5";"13"};
{\ar@[red]@/_.3pc/"6";"7"};{\ar@[red]"7";"8"};{\ar@[red]"8";"9"};
{\ar@{.}"9";"10"};{\ar@[red]"10";"11"};{\ar@[red]@{-}"11";"12"};
{\ar@[red]@/_.3pc/"12";"13"};
{\ar@/^.3pc/"13";"5"};{\ar"5";"11"};{\ar"11";"4"};{\ar"4";"10"};{\ar"10";"3"};
{\ar"9";"2"};{\ar"2";"8"};{\ar"8";"1"};{\ar"1";"7"};{\ar@/_.3pc/"7";"6"};
\endxy \\
\text{a pillar subquiver}
\end{array}$$
\caption{A column and pillar of a dimer quiver $Q$.
The black interior arrows are arrows of $Q$; the blue and red boundary arrows are paths of length at least one in $Q$; and each interior cycle is a unit cycle of $Q$.
The leftmost and rightmost brown arrows of the column are identified.
Note that the blue and red bounding paths of the pillar are equal modulo $\ker \eta$.}
\label{figure3}
\end{figure}

\begin{figure}
\includegraphics[width=14cm]{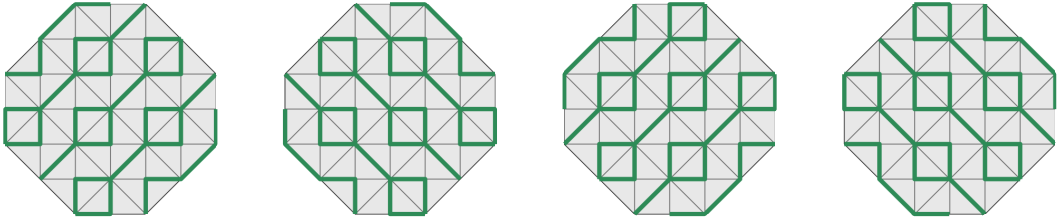}
\caption{The four minimal subdivisions of the geodesic ghor algebra in Figure \ref{examples}.ii.}
\label{four subdivisions}
\end{figure}

\begin{figure}
\includegraphics[width=14cm]{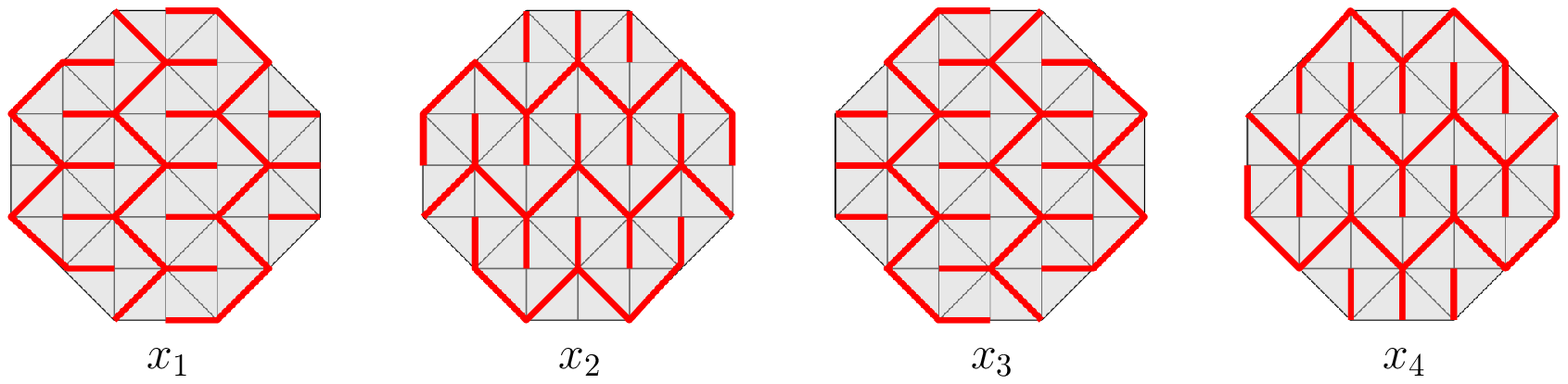}\\
\ \\
\includegraphics[width=14cm]{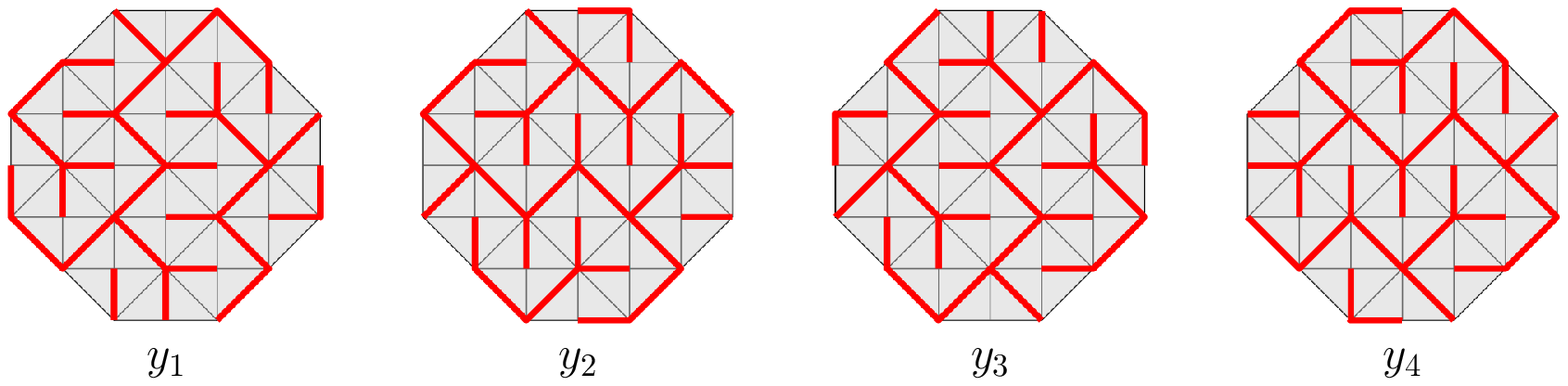}\\
\ \\
\includegraphics[width=14cm]{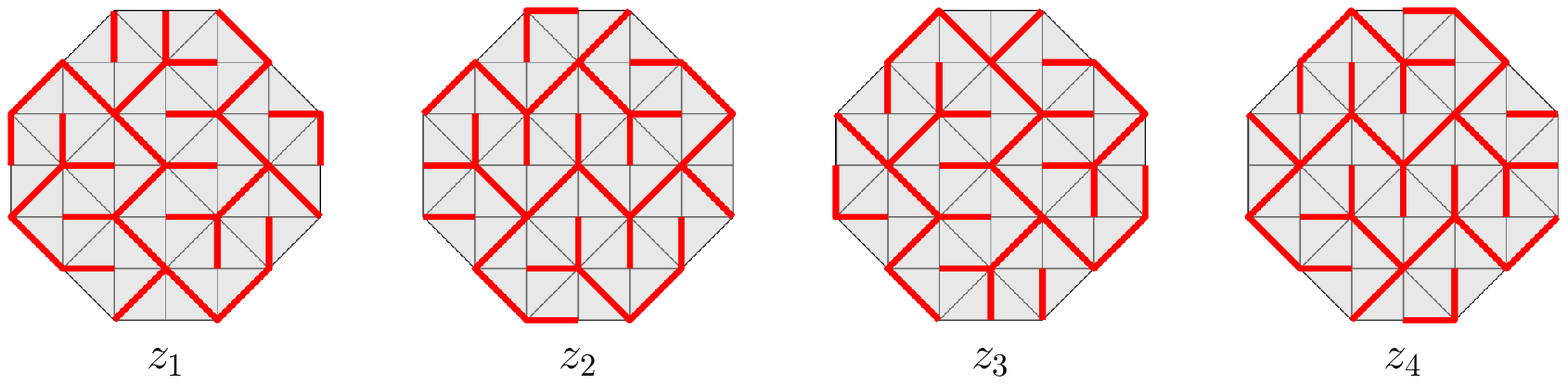}
\caption{The simple matchings of the geodesic ghor algebra in Figure \ref{examples}.ii.}
\label{simple matchings octagon}
\end{figure}

\begin{figure}
\includegraphics[width=12cm]{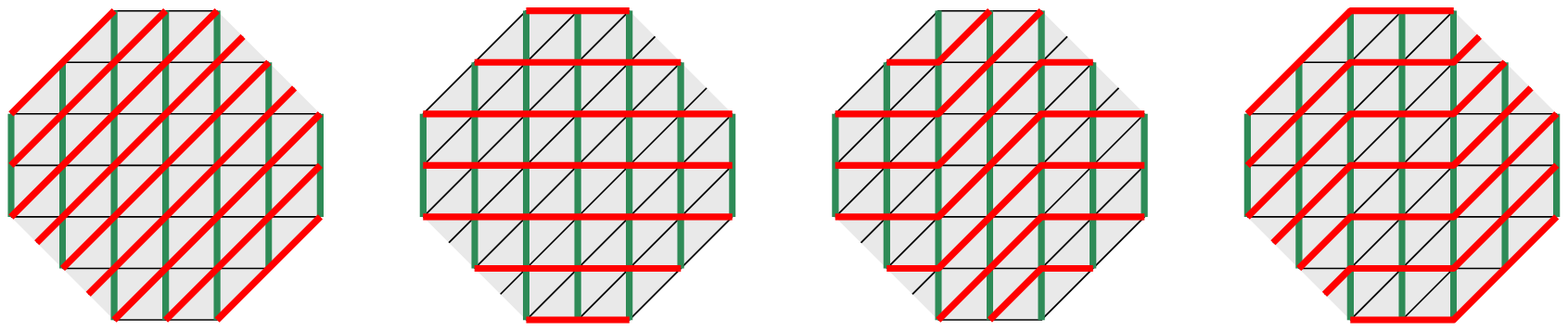}\\ \ \\
\includegraphics[width=12cm]{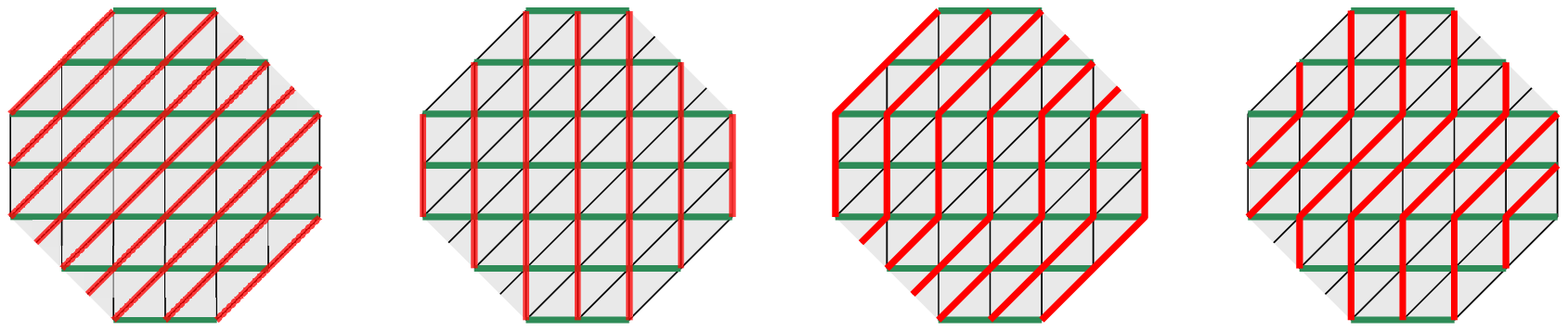}\\ \ \\
\includegraphics[width=12cm]{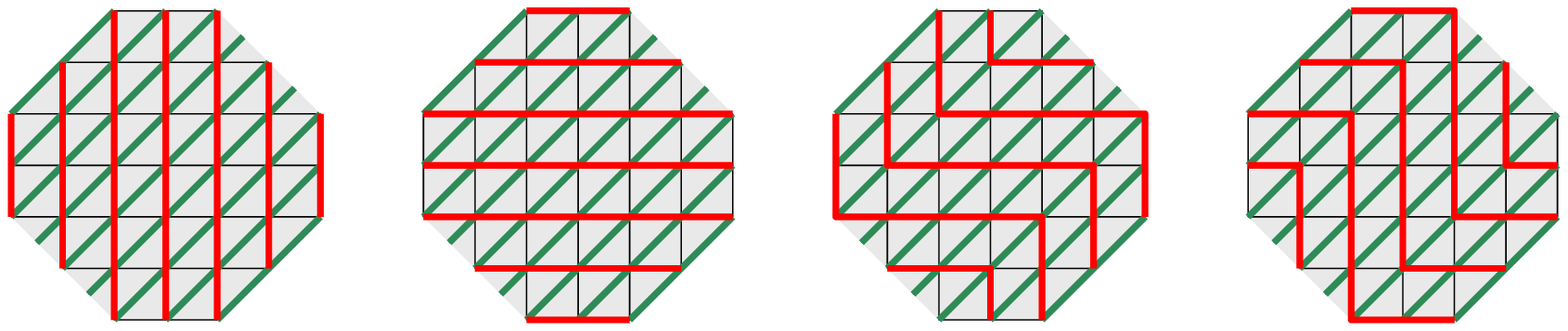}
\caption{The minimal subdivisions and simple matchings of the geodesic ghor algebra in Figure \ref{examples}.iii.
There are three minimal subdivisions (drawn in green) that each yield four simple matchings (drawn in red).
Each minimal subdivision consists only of columns in the respective directions horizontal, vertical, and diagonal.
Each pair of minimal subdivisions yield a common simple matching, and so there are a total of nine simple matchings.}
\label{subdivisionexample}
\end{figure}

\begin{Lemma} \label{columns and pillars} \
Let $p,q$ be paths in $Q$ whose lifts $p^+$, $q^+$ have no cyclic subpaths and bound a region $\mathcal{R}_{p,q}$ which contains no vertices in its interior.
  \begin{enumerate}
    \item If $p$ and $q$ do not intersect, then $p^+$ and $q^+$ bound a column.
    \item Otherwise $p^+$ and $q^+$ bound a union of pillars.
   In particular, if
$$\operatorname{t}(p^+) = \operatorname{t}(q^+) \ \ \ \text{ and } \ \ \ \operatorname{h}(p^+) = \operatorname{h}(q^+) \not = \operatorname{t}(p^+),$$
   then $p \equiv q$.
  \end{enumerate}
Consequently, each subdivision yields a simple matching of $Q$.
\end{Lemma}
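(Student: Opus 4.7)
The plan is to analyze the local dimer structure of the bounded region $\mathcal{R}_{p,q}$ in the cover and to exploit the fact that no vertex lies in its interior to force every interior arrow to have both endpoints on $p^+ \cup q^+$. From this, an induction on the number of unit-cycle faces contained in $\mathcal{R}_{p,q}$ will produce the column or pillar shape.

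First I would record the following structural observation: since the complementary components of $\overline{Q}$ in $\Sigma$ are unit cycles, $\mathcal{R}_{p,q}$ is tiled by finitely many unit cycles, and the no-interior-vertex hypothesis forces every interior arrow to be a chord with both endpoints on the boundary paths. For part (1), starting from $\operatorname{t}(p^+)$, let $F$ be the unique unit cycle inside $\mathcal{R}_{p,q}$ containing the first arrow of $p^+$. Its boundary consists of a subpath of $p^+$ together with arrows going into the interior; by the chord property these must terminate on $q^+$. Peeling $F$ off yields a smaller region of the same type, and induction on the number of faces assembles everything into the column pattern of Figure~\ref{figure3}.i, the brown arrows being precisely the chords between $p^+$ and $q^+$.

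For part (2), the transverse intersection vertices of $p^+$ with $q^+$ partition $\mathcal{R}_{p,q}$ into maximal sub-regions whose boundaries are sub-paths of $p$ and $q$ sharing both tail and head and having no further intersection; part (1) applied to each such sub-region (with the shared endpoints regarded as the two corner vertices) produces a pillar, giving the union-of-pillars conclusion. For the \emph{in particular} statement, the single pillar bounded by $p^+$ and $q^+$ with distinct shared endpoints lets one build $p$ from $q$ by successively sliding across the individual unit cycles of the pillar: each elementary slide replaces one boundary arc of a unit cycle by its complementary arc, which is the defining relation of the dimer algebra $kQ/I$ and hence holds in its quotient $A = kQ/\ker\eta$. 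The chain of slides terminates because the pillar has finitely many faces, yielding $p \equiv q$ in $A$.

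For the final consequence, given a subdivision $F$ I would construct a matching $x \subset Q_1$ by taking, from each column or pillar in $F$, the set of brown chord-arrows. By the column/pillar combinatorics, each unit cycle of $Q$ lies in the interior or on the boundary of precisely one member of $F$ and contains exactly one such chord, so $x$ is a perfect matching. To show $x$ is simple, I would exhibit a spanning cycle in $Q \setminus x$ by concatenating the blue and red boundary paths of the columns and pillars in $F$: these paths visit every vertex of $Q$ and avoid $x$ by construction, so $Q \setminus x$ supports a simple module of dimension vector $(1, \ldots, 1)$. The main obstacle I expect is the careful upgrade in (2) from the $\sigma$-equality $\overline{p} \stackrel{\sigma}{=} \overline{q}$ supplied by Lemma~\ref{Z}.ii to the honest equality $p \equiv q$ in $A$: one must verify that the iterated unit-cycle slides across the pillar are compatible with the perfect-matching relations encoded by $\ker \eta$ and that no stray factor of $\sigma$ remains, which is where the no-interior-vertex hypothesis is used in an essential way to control the face count.
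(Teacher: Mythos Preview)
The paper does not give a self-contained argument here: its proof is the single sentence ``Follows from \cite[Lemmas 4.12, 4.14, 4.15]{B1}, with the covering space $\mathbb{R}^2$ replaced by the hyperbolic plane $\mathbb{H}^2$ if $\Sigma$ is not flat.'' Your proposal is therefore not competing with an in-paper argument but rather reconstructing what those cited lemmas establish, and your outline (interior arrows are forced to be chords by the no-interior-vertex hypothesis; peel off unit cycles one at a time to obtain the column shape; cut at intersection points to reduce (2) to (1); slide across unit cycles via the dimer relations $I \subseteq \ker\eta$ to get $p \equiv q$) is indeed the shape of the argument in \cite{B1}.

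Two places in your sketch are imprecise enough to count as gaps if you were to write this out in full. First, in the final consequence you cannot take ``the set of brown chord-arrows'' as your matching: in the column picture the interior (black) arrows alternate direction, and each unit cycle contains two of them, so a perfect matching is a \emph{choice} of every other interior arrow (say all those pointing from the $p$-side to the $q$-side); you must also check that these choices are compatible across adjacent columns and pillars in the subdivision $F$. Second, ``concatenating the blue and red boundary paths'' does not automatically produce a single cycle through every vertex of $Q$: the boundary paths of the members of $F$ are the parallel geodesic cycles $\{c_i\}_{i \in Q_0}$ from Definition~\ref{geocom}, and the spanning cycle is built by following one $c_i$ and detouring through the remaining vertices along interior arrows \emph{not} in the chosen matching. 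Your instinct that the obstacle lies in upgrading $\overbar{p} \stackrel{\sigma}{=} \overbar{q}$ to $p \equiv q$ is well placed, but the resolution is exactly the elementary unit-cycle slides you describe, and no $\sigma$-bookkeeping is needed once you are working face by face inside a single pillar.
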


\begin{proof}
Follows from \cite[Lemmas 4.12, 4.14, 4.15]{B1}, with the covering space $\mathbb{R}^2$ replaced by the hyperbolic plane $\mathbb{H}^2$ if $\Sigma$ is not flat.
\end{proof}

\begin{Example} \label{sub} \rm{
The geodesic ghor algebra in Figure \ref{examples}.ii embeds in a genus $2$ surface, and admits four minimal subdivisions (that is, subdivisions consisting of a minimal number of columns or pillars) shown in Figure \ref{four subdivisions}.
These four subdivisions yield the twelve simple matchings shown in Figure \ref{simple matchings octagon}.

The geodesic ghor algebra in Figure \ref{examples}.iii also embeds in a genus $2$ surface, and admits three minimal subdivisions shown in Figure \ref{subdivisionexample}.
These three subdivisions consist only of columns, and yield nine distinct simple matchings.
}\end{Example}

\begin{Proposition} \label{arrow simple}
If $A$ is geodesic, then each arrow of $A$ is contained in a simple matching.
\end{Proposition}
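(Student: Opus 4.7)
The plan is, given an arbitrary arrow $a \in Q_1$, to construct a simple matching of $Q$ that contains $a$. The main tool will be the consequence of Lemma \ref{columns and pillars} that every subdivision of $Q$ yields a simple matching, combined with the subdivisions $F_k$ provided for each $k \in [2N]$ by Lemma \ref{M} from the geodesic hypothesis. I would aim to choose $k$, and possibly adjust the parallel family $\{c_i^{(k)}\}_{i \in Q_0}$, so that the simple matching associated with $F_k$ contains $a$.

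First I would unpack how a simple matching arises from a subdivision, by examining the column and pillar subquivers of Figure \ref{figure3}. The simple matching associated with a subdivision $F$ is built from the arrows that cut transversely between the two bounding paths of each column or pillar, rather than those lying on its boundary (the geodesic cycles $c_i^{(k)}$). Consequently, the simple matching of $F_k$ contains $a$ as soon as $a$ is interior to some column or pillar of $F_k$, i.e., as soon as $a$ is not a sub-arrow of any $c_i^{(k)}$.

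The task then reduces to finding a $k \in [2N]$ for which $a$ is not contained in any geodesic cycle of the parallel family $\{c_i^{(k)}\}$. The arrow $a$ has a definite geometric direction in $\Sigma$ (fixed by its lift $a^+$), and a geodesic cycle of class $[\gamma_k] = (\delta_{k\ell} - \delta_{k+N,\ell})_{\ell}$ can only contain $a$ if its trajectory is compatible with this direction. Since the $2N$ classes $[\gamma_k]$ correspond to $N$ distinct homological directions up to sign, only a small number of them are compatible with $a$'s direction; for any other $k$, no $c_i^{(k)}$ can contain $a$. Moreover, Definition \ref{geocom} only asserts the existence of \emph{some} parallel family in each direction, and using this flexibility one may replace the geodesic through $\operatorname{t}(a)$ by a parallel one that avoids $a$ whenever the initial choice happens to pass through $a$.

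\textbf{Main obstacle.} The delicate step is making the direction count rigorous in the hyperbolic setting, where geodesics in $\mathbb{H}^2$ are less uniform than in the Euclidean plane. The argument should parallel the covering-space analysis developed for Lemma \ref{Z} through Proposition \ref{how}: lift $a$ to $\Sigma^+$, observe that a geodesic in $Q^+$ passing through $a$ is forced by its direction to lie in only finitely many parallel families, and then appeal to the $2N$-fold range of available directions $k$ to select one missing $a$. Given such a $k$, the arrow $a$ is interior to $F_k$, and hence lies in the simple matching supplied by Lemma \ref{columns and pillars}.
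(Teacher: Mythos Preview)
Your overall strategy matches the paper's starting point: use the $2N$ subdivisions $F_k$ from Lemma \ref{M}, and show that for some $k$ the arrow $a$ lies in a simple matching produced by $F_k$ via Lemma \ref{columns and pillars}. Two points, however, need adjustment.

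First, a small correction: being interior to $F_k$ is not quite sufficient. Interior arrows of a \emph{column} always lie in one of the two alternating simple matchings of that column, but a \emph{pillar} carries a parity constraint at its endpoints, so only half of its interior arrows --- the ``odd'' ones --- belong to the associated simple matching. The paper explicitly groups the ``even'' interior pillar arrows together with the boundary arrows as the problematic cases.

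Second --- and this is the real divergence --- the paper does not attempt a direct count of compatible directions. The obstacle you flag is genuine: in the hyperbolic case the parallel cycles $c_i^{(k)}$ need not even share the class $[\gamma_k]$ (as noted in the remark following Definition \ref{geocom}), so there is no clean correspondence between the class of a geodesic and the local direction of its constituent arrows, and the heuristic ``only a few of the $N$ directions are compatible with $a$'' has no obvious rigorous formulation. Instead the paper argues by contradiction via a homotopy. Assuming $a$ lies in no simple matching, for \emph{every} $k$ the arrow $a$ is either on the boundary of $F_k$ or is an even interior pillar arrow; in each such case one can build a dimer-preserving homotopy on the interval $[k,k+1]$ that first straightens the column boundaries parallel to $\vec{\varepsilon}_k$, then straightens two auxiliary geodesic paths through $\operatorname{t}(a)$ and $\operatorname{h}(a)$, and finally slides these along $\pm\vec{\varepsilon}_k$ until $a$ itself points (arbitrarily close to) in the direction $\vec{\varepsilon}_k$. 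Concatenating over $k=1,\ldots,2N$ produces a dimer-preserving homotopy under which $a$ rotates continuously through a full $2\pi$, which is impossible on the compact surface $\Sigma$. This topological obstruction is the rigorous substitute for the direction count you were aiming for.
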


\begin{proof}
Suppose $A$ is geodesic, and assume to the contrary that there is an arrow $a \in Q_1$ that is not contained in any simple matching.

Let $\tilde{P} \subset \mathbb{H}^2$ be a fundamental polygon in the cover of $\Sigma$, and let $\iota: \tilde{P} \to \mathbb{R}^2$ be an embedding such that $P := \iota(\tilde{P})$ is a regular $2N$-gon with respect to the standard metric on $\mathbb{R}^2$.
In the following, we will define a dimer-preserving homotopy
$$H: Q \times [0, 2N] \to \Sigma$$
that rotates the arrow $a$ by $2 \pi$.
However, since $\Sigma$ is a compact surface, such a homotopy cannot exist, and thus the arrow $a$ cannot exist, contrary to assumption.

We define $H$ inductively.
Let $H(-,0): Q \to \Sigma$ be a dimer embedding (that is, each connected component of $\Sigma \setminus H(Q,0)$ is simply connected and bounded by a unit cycle), and suppose
$$H: Q \times [0,k] \to \Sigma$$
has been defined.
To define $H: Q \times [k,k+1] \to \Sigma$, set
$$H^+(-,t) := \iota(\tilde{P} \cap \pi^{-1}(H(-,t))).$$
Then $H^+(Q,t) \subset P \subset \mathbb{R}^2$.
We do not require the image $H^+(b,t)$ of an arrow $b$ to be a line segment in $\mathbb{R}^2$, that is, we allow $H$ to `bend' arrows.

Denote by $\vec{\varepsilon}_k$ the unit vector in $\mathbb{R}^2$ based at the midpoint of side $k$ of $P$, orthogonal to side $k$, and pointing to the center of $P$.
Construct a subdivision $F$ from a set of parallel geodesic cycles $\{ c_i \}_{i \in Q_0}$ in the direction $\vec{\varepsilon}_k$, as in Lemma \ref{M}.
We proceed in three steps.

First, let
$$H^+: Q \times [k, k+ \tfrac 13] \to P$$
be a dimer-preserving homotopy that straightens the boundaries of the columns in the subdivision $F$ so that they are parallel to $\vec{\varepsilon}_k$ (in $\mathbb{R}^2$).
Specifically, let $q$ be a boundary path of a column in $F$.
Then $H^+(-,t)$, $t \in [k, k + \tfrac 13]$, `straightens' $q$ so that each vertex subpath of $q$ lies on a line $L \subset \mathbb{R}^2$ that is parallel to $\vec{\varepsilon}_k$, by shrinking the (images of the) arrow subpaths of $q$ that deviate from $L$, and then rotating and translating these arrows so that they lie along $L$.

If $a$ is a subpath of a column boundary path of $F$, then the arrow image $H^+(a,k+\tfrac 13)$ is parallel to $\vec{\varepsilon}_j$.
In this case, set
$$H(Q,t) = H(Q,k+\tfrac 13), \ \ \ \ \text{for } t \in [k+ \tfrac 13, k+1].$$
If $a$ is an interior arrow of a column, then $a$ belongs to a simple matching, contrary to assumption.
So suppose $a$ is a subpath of either a pillar boundary in $F$, or an even arrow in the interior of a pillar.

Let $L_{\operatorname{t}(a)}$, $L_{\operatorname{h}(a)}$ be lines in $\mathbb{R}^2$ that are parallel to $\vec{\varepsilon}_k$, and contain the respective vertices $\operatorname{t}(a)$, $\operatorname{h}(a)$.
Since the columns in $F$ have all been straightened and run parallel to $L$, we may form paths $p$ and $q$ from paths that are
\begin{itemize}
 \item subpaths of unit cycles that $L_{\operatorname{t}(a)}$ and $L_{\operatorname{h}(a)}$ intersect, respectively; and
 \item subpaths of cycles in $\{ c_i\}_{i \in Q_0}$.
\end{itemize}
Since $p$ and $q$ are constructed from subpaths of parallel geodesic cycles, $p$ and $q$ are themselves geodesic cycles.
Thus we may define a homotopy
$$H^+: Q \times [k+ \tfrac 13, k + \tfrac 23] \to P$$
that straightens both $p$ and $q$ so that all of their arrow subpaths lie on $L_{\operatorname{t}(a)}$ and $L_{\operatorname{h}(a)}$ respectively, while leaving all other vertices of $Q$ fixed.

Finally, define a homotopy
$$H^+: Q \times [k+ \tfrac 23, k+1] \to P$$
by first translating the path $q$ along $L_{\operatorname{h}(a)}$ in the direction $\vec{\varepsilon}_k$, and the path $p$ along $L_{\operatorname{t}(a)}$ in the direction $-\vec{\varepsilon}_k$, so that the angle between $\vec{\varepsilon}_k$ and the vector $\vec{a}$ from $\operatorname{t}(a)$ to $\operatorname{h}(a)$ is less than $\tfrac{\pi}{4}$.
After this is accomplished, bring the two lines $L_{\operatorname{t}(a)}$ and $L_{\operatorname{h}(a)}$ sufficiently close together so that $\vec{a}$ is pointing in the direction $\vec{\varepsilon}_k$, to within any desired approximation.
Then the image $H^+(a,k+1)$ of $a$ is nearly parallel to $\vec{\varepsilon}_k$.

Consequently, the homotopy $H(-,t)$ rotates $a$ by $2 \pi$ as $t$ runs from $0$ to $2N$.
But this is not possible since $\Sigma$ is a compact surface, a contradiction.
\end{proof}

\begin{Proposition} \label{G}
Suppose $A$ is geodesic.
A cycle $p$ is contractible if and only if it satisfies $\overbar{p} \stackrel{\sigma}{=} 1$.
\end{Proposition}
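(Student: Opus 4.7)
The forward direction $(\Rightarrow)$ is immediate from Lemma \ref{Z}(i): a contractible cycle $p$ lifts to a cycle $p^+$ in the covering quiver $Q^+$, whence $\overbar{p} \stackrel{\sigma}{=} 1$.

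For the converse, I argue the contrapositive. Suppose $p$ is a non-contractible cycle, so that $u := [p] \neq 0$ in $\mathbb{Z}^N$, and show $\overbar{p} \not\stackrel{\sigma}{=} 1$. It suffices to rule this out for $\bar{\tau}$, because $\bar{\eta}(p) \stackrel{\sigma_{\mathcal{P}}}{=} 1$ forces $\bar{\tau}(p) \stackrel{\sigma_{\mathcal{S}}}{=} 1$ by setting each non-simple matching variable to $1$. By Proposition \ref{how}, $\bar{\tau}(p)$ modulo $\sigma$ depends only on $u$, so I may replace $p$ by any convenient class-$u$ representative built from the geodesic cycles $\gamma_j$.

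The strategy is to isolate a simple matching variable whose exponent in $\bar{\tau}(p)$ is forced to differ from the $\sigma$-balanced value. Fix $k \in [N]$ with $u_k \neq 0$. By Lemma \ref{M} the parallel geodesic family containing $\gamma_k$ determines a subdivision $F$ of $Q$, and by Lemma \ref{columns and pillars} this $F$ yields a simple matching $x$. A direct examination of a column or pillar (Figure \ref{figure3}) shows that $x$ consists of exactly the interior arrows of the columns and pillars of $F$; consequently the arrows of $\gamma_k$, lying along subdivision boundaries, are not in $x$, so $\bar{\tau}(\gamma_k)$ has $x$-exponent $0$, while $\sigma_{\mathcal{S}}$ has $x$-exponent $1$. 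This already handles the rank-one case $u = e_k$: taking $p = \gamma_k$, the identity $\bar{\tau}(\gamma_k) = \sigma_{\mathcal{S}}^n$ would force $n = 0$ after comparing $x$-exponents, hence $\bar{\tau}(\gamma_k) = 1$, contradicting Proposition \ref{arrow simple} (each arrow of $\gamma_k$ lies in some simple matching and therefore contributes a nontrivial factor to $\bar{\tau}(\gamma_k)$).

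For general $u$, I build a class-$u$ representative $c$ at a fixed base vertex by concatenating $|u_j|$ copies of $\gamma_j$ or $\gamma_{j+N}$ (according to the sign of $u_j$), joined by connecting paths whose lifts lie inside a single fundamental polygon; by Lemma \ref{Z}(ii) the connecting-path contributions to $\bar{\tau}(c)$ are absorbed into powers of $\sigma$. Running the $x^{(k)}$-exponent comparison against the $N$ subdivisions $F_1,\dots,F_N$ arising from the $N$ geodesic directions converts the hypothetical identity $\bar{\tau}(c)=\sigma^n$ into a linear system in $u_1,\dots,u_N$ whose diagonal vanishes by the boundary-versus-interior observation above. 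The main obstacle is verifying that this system admits no nonzero integer solution compatible with $u_k \neq 0$; the cleanest route is a piercing count of how each $\gamma_j$ crosses the columns of $F_k$ for $j \neq k$, which is tractable precisely because the geodesic hypothesis ensures that lifts of cyclic permutations of $\gamma_j$ contain no cyclic subpath and therefore cross each parallel family of $F_k$ in a combinatorially controlled manner.
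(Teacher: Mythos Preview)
Your forward direction and the rank-one case $u=e_k$ are fine, and reducing via Proposition~\ref{how} to a representative built from the $\gamma_j$ is the right opening move. The problem is that the general case is not actually proved: you yourself flag ``the main obstacle'' as showing that the resulting linear system in $u_1,\dots,u_N$ has no nonzero integer solution, and then gesture at a piercing count without carrying it out. The off-diagonal entries of your system --- the $x^{(k)}$-exponents of $\overbar{\gamma}_j$ for $j\neq k$ --- are never computed, and nothing you have written forces the associated matrix to be nonsingular. The geodesic hypothesis controls how $\gamma_j$ sits relative to its \emph{own} parallel family $F_j$, not relative to $F_k$, so the ``combinatorially controlled'' crossing you invoke is exactly the missing content.

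The paper circumvents this by reversing your choice of subdivision: for a fixed $k$ it takes the subdivision determined by the parallel family containing $\gamma_{k+N}$, i.e.\ the one pointing in the \emph{opposite} direction to $\gamma_k$. The simple matching $x$ arising from that subdivision is then crossed \emph{more} by $\gamma_k$ than by any other $\gamma_{k'}$: there is some $n\geq 1$ with $x^n\mid\overbar{\gamma}_k$ but $x^n\nmid\overbar{\gamma}_{k'}$ for every $k'\neq k$. After writing $\overbar{p}\stackrel{\sigma}{=}\prod_{j}\overbar{\gamma}_{k(j)}$ and discarding opposite pairs $\overbar{\gamma}_\ell\overbar{\gamma}_{\ell+N}\stackrel{\sigma}{=}1$ until the product is minimal, the hypothesis $\overbar{p}\stackrel{\sigma}{=}1$ becomes
\[
\prod_{k(j)\in[N]}\overbar{\gamma}_{k(j)}\ \stackrel{\sigma}{=}\ \prod_{k(j)\in[N+1,2N]}\overbar{\gamma}_{k(j)-N},
\]
with the two sides sharing no $\overbar{\gamma}_k$ factor by minimality. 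The single distinguishing matching $x$ then separates the two sides directly, and no linear system needs to be solved. If you want to salvage your approach, the cleanest fix is simply to swap $F_k$ for the opposite-direction subdivision; your same-direction choice makes the diagonal of the system vanish, which is precisely what prevents you from reading off the contradiction.
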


\begin{proof}
If $p$ is contractible, then $\overbar{p} \stackrel{\sigma}{=} 1$ by Lemma \ref{Z}.i.

So assume to the contrary that there is a cycle $p$ for which $\overbar{p} \stackrel{\sigma}{=} 1$, but $p$ is not contractible, $[p] \not = 0$.
Denote by $m$ the number of times that $p$ transversely intersects the boundary $\partial P$ of the fundamental polygon $P$.

For $j \in [m]$, let $r_j \in e_{\operatorname{t}(p)}kQe_{\operatorname{t}(p)}$ be the cycle defined in (\ref{r_k}).
Denote by $k(j) \in [2N]$ the unique side of $P$ that $r_j$ transversely intersects, where the sides are ordered $1, \ldots, 2N$ counterclockwise around $P$.
For $k \in [2N]$, let $\gamma_k$ be the geodesic cycle that transversely intersects side $k$ of $P$, defined in (\ref{geodesic cycle def}).
Then $\overbar{\gamma}_{k(j)} \stackrel{\sigma}{=}\overbar{r}_j$.
Thus,
\begin{equation} \label{j in [m]}
\overbar{p} \stackrel{\sigma}{=} \prod_{j \in [m]} \overbar{r}_j \stackrel{\sigma}{=} \prod_{j \in [m]} \overbar{\gamma}_{k(j)} = \prod_{k(j) \in [2N]} \overbar{\gamma}_{k(j)}.
\end{equation}
(Note that each factor $\overbar{\gamma}_k$ may appear more than once in the product.)
Without loss of generality, we may assume that $m$ is minimal in the sense that if any proper subset of cycles $r_j$ is omitted from the product $\prod_{j \in [m]}\overbar{r}_j$, then the remaining product is not equal to $1$ (modulo $\sigma-1$). 

By assumption, $\overbar{p} \stackrel{\sigma}{=} 1$.
Therefore, by (\ref{j in [m]}),
\begin{equation} \label{pkj}
\prod_{k(j) \in [N]}\overbar{\gamma}_{k(j)} \stackrel{\sigma}{=} \prod_{k(j) \in [N+1,2N]} \overbar{\gamma}_{k(j)}^{-1} \stackrel{\sigma}{=} \prod_{k(j) \in [N+1,2N]} \overbar{\gamma}_{k(j)-N}.
\end{equation}

Fix $k \in [N]$. 
Since $A$ is geodesic, there is a subdivision containing the geodesic cycle $\gamma_{k+N}$, that is, a subdivision that `points in the opposite direction' to $\gamma_k$.
For each $k' \in [m] \setminus \{ k \}$, there is a simple matching $x \in \mathcal{S}$ obtained from the subdivision such that for some $n \geq 1$, $x^n \mid \overbar{\gamma}_k$, but $x^n \nmid \overbar{\gamma}_{k'}$. 

Furthermore, the left and right-hand sides of (\ref{pkj}) have no $\overbar{\gamma}_k$ factors in common: otherwise, if there were a $j_1,j_2 \in [m]$ such that $k(j_1) = k(j_2)+N$, then 
$$\overbar{\gamma}_{k(j_1)} \overbar{\gamma}_{k(j_2)} \stackrel{\sigma}{=} 1,$$
by Lemma \ref{Z}. 
But this would contradict the minimality of $m$.
It therefore follows that the left and right-hand sides of (\ref{pkj}) cannot be equal (modulo $\sigma-1$).
\end{proof}

The following theorem shows that a ghor algebra reflects the topology of the surface in which it is embedded.

\begin{Theorem} \label{iff}
Suppose $A$ is geodesic.
Let $p$ and $q$ be cycles in $Q$.
Then
$$[p] = [q] \ \ \text{ if and only if } \ \ \ \overbar{p} \stackrel{\sigma}{=} \overbar{q}.$$
In particular, if $\Sigma$ is smooth, then $p$ and $q$ are homologous if and only if $\overbar{p} \stackrel{\sigma}{=} \overbar{q}$.
\end{Theorem}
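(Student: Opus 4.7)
The forward implication is exactly Proposition~\ref{how}, so the real task is the converse: assume $\overbar{p}\stackrel{\sigma}{=}\overbar{q}$ and deduce $[p]=[q]$. My plan is to manufacture a cycle $g$ that ``$\sigma$-inverts'' $q$ and then show that $pg$ is contractible by invoking Proposition~\ref{G}.

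First I would reduce to the case where $p$ and $q$ are based at a common vertex $i$. If $p\in e_iAe_i$ and $q\in e_jAe_j$, use the inverse lemma (the lemma following Lemma~\ref{Z}) to choose a path $s\colon i\to j$ together with a path $t\colon j\to i$ satisfying $\overbar{st}\stackrel{\sigma}{=}\overbar{ts}\stackrel{\sigma}{=}1$. Proposition~\ref{G} then makes $st$ and $ts$ contractible, so $[s]+[t]=0$. Replacing $q$ by the cycle $tqs\in e_iAe_i$ changes neither the class nor the $\overbar{\,\cdot\,}$-image modulo $\sigma-1$, so we may henceforth assume $p,q\in e_iAe_i$.

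Next I would build a cycle $g\in e_iAe_i$ with $[g]=-[q]$, the step where the geodesic hypothesis enters essentially. For each $k\in[N]$, pick a path $\beta_k\colon i\to\operatorname{t}(\gamma_k)$ and a $\sigma$-inverse $\alpha_k\colon\operatorname{t}(\gamma_k)\to i$, and set $\tilde\gamma_k:=\alpha_k\gamma_k\beta_k\in e_iAe_i$. Since $\alpha_k\beta_k$ is then a cycle with $\overbar{\alpha_k\beta_k}\stackrel{\sigma}{=}1$, Proposition~\ref{G} gives $[\alpha_k]+[\beta_k]=0$, so $[\tilde\gamma_k]=[\gamma_k]$ is the $k$-th standard basis vector of $\mathbb{Z}^N$; analogously $\tilde\gamma_{k+N}\in e_iAe_i$ has class $-[\gamma_k]$. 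Because $\{[\tilde\gamma_k]\}_{k\in[N]}$ is a $\mathbb{Z}$-basis of $\mathbb{Z}^N$, writing $-[q]=\sum_{k\in[N]} n_k[\gamma_k]$ with $n_k\in\mathbb{Z}$ and concatenating $\tilde\gamma_k^{n_k}$ for $n_k\geq 0$ and $\tilde\gamma_{k+N}^{-n_k}$ for $n_k<0$ produces the desired $g$.

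To conclude, the cycle $qg$ has class $[q]+[g]=0$, so by Lemma~\ref{Z}.(i) we have $\overbar{q}\,\overbar{g}=\overbar{qg}\stackrel{\sigma}{=}1$. Combined with $\overbar{p}\stackrel{\sigma}{=}\overbar{q}$ this yields $\overbar{pg}\stackrel{\sigma}{=}1$, whence $pg$ is contractible by Proposition~\ref{G}. Therefore $0=[pg]=[p]+[g]=[p]-[q]$, and $[p]=[q]$. The smooth-surface statement is then immediate, since in that case $[c]$ is by definition the homology class of $c$ in $H_1(\Sigma)$. The main obstacle in this plan is precisely the construction of $g$: without geodesic cycles realising each of the $N$ independent directions prescribed by the sides of $P$, one cannot realise an arbitrary class in $\mathbb{Z}^N$ by a cycle based at a fixed vertex, and the contractibility characterisation of Proposition~\ref{G} alone would not suffice.
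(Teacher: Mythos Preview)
Your argument is correct in outline but takes an unnecessary detour, and one citation is wrong. First, the citation: when you write ``$[qg]=0$, so by Lemma~\ref{Z}.(i) we have $\overbar{qg}\stackrel{\sigma}{=}1$'', Lemma~\ref{Z}.(i) does not apply, since it requires $qg$ to be contractible, and class zero is strictly weaker than contractible on a hyperbolic surface. The correct reference is Proposition~\ref{how} applied to $qg$ and the trivial cycle $e_i$.

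More substantively, your elaborate construction of $g$ from the geodesic cycles $\gamma_k$ is not needed, and your closing remark that ``the main obstacle is precisely the construction of $g$'' mislocates where the geodesic hypothesis enters. The paper's proof is the contrapositive of yours and simply takes for $g$ the $\sigma$-inverse of $q$: a path $q'$ whose lift runs from $\operatorname{h}(q^+)$ back to $\operatorname{t}(q^+)$ in $Q^+$, exactly as you yourself did for $s,t$ in the basepoint reduction. Then $q'q$ is contractible by construction (so $[q']=-[q]$ and $\overbar{q'}\stackrel{\sigma}{=}\overbar{q}^{-1}$), and the cycle $sptq'$ has class $[p]-[q]\neq 0$, hence is not contractible, hence $\overbar{p}\,\overbar{q}^{-1}\stackrel{\sigma}{=}\overbar{sptq'}\stackrel{\sigma}{\neq}1$ by Proposition~\ref{G}. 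No appeal to the $\gamma_k$ is required at this stage; the geodesic hypothesis is used only inside Proposition~\ref{G} itself. Your route works, but it reproves a special case of Proposition~\ref{how} (surjectivity of $[\,\cdot\,]$ onto $\mathbb{Z}^N$) that is irrelevant to the logical structure of the argument.
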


\begin{proof}
The forward implication was shown in Proposition \ref{how}.

So suppose $[p] \not = [q]$.
Set $i := \operatorname{t}(p^+)$ and $j := \operatorname{t}(q^+)$.
Let $s^+$, $t^+$, $q'^+$ be paths in $Q^+$ respectively from $i$ to $j$; $j$ to $i$; and $\operatorname{h}(q^+)$ to $j$.
Then $\overbar{t} \stackrel{\sigma}{=} \overbar{s}^{-1}$ and $\overbar{q}' \stackrel{\sigma}{=} \overbar{q}^{-1}$, by Lemma \ref{Z}.i.
Furthermore, since $[p] \not = [q]$, the cycle $sptq'$ is not contractible.
Thus
$$\overbar{p}\overbar{q}^{-1} \stackrel{\sigma}{=} \overbar{s} \overbar{t} \overbar{p} \overbar{q}^{-1} \stackrel{\sigma}{=} \overbar{sptq'} \stackrel{\sigma}{\not =} 1,$$
where the last inequality holds by Proposition \ref{G}.
Therefore $\overbar{p} \stackrel{\sigma}{\not =} \overbar{q}$.
\end{proof}

\begin{Remark} \label{curve} \rm{
If $\Sigma$ is smooth and $p,q$ are homologous cycles, then there is some $\ell \in \mathbb{Z}$ such that $\overbar{p} = \overbar{q} \sigma^{\ell}$, by Theorem \ref{iff}.
It may then be asked whether there is any significance to the exponent $\ell$.
We expect that the exponent is a sort of discrete measure of curvature of the surface $\Sigma$.
If this is the case, then the ghor algebra would reveal aspects of $\Sigma$ that are invisible to homologous cycles alone.
We leave an investigation of this possibility for future work.
}\end{Remark}

\begin{Proposition} \label{A}
Suppose $A$ is geodesic.
Let $p,q \in Q_{\geq 1}$ be paths for which
$$\operatorname{t}(p) = \operatorname{t}(q) \ \ \ \ \text{ and } \ \ \ \ \operatorname{h}(p) = \operatorname{h}(q).$$
Then
$$\bar{\eta}(p) = \bar{\eta}(q) \ \ \ \text{ if and only if } \ \ \ \bar{\tau}(p) = \bar{\tau}(q).$$
\end{Proposition}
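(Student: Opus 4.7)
The forward implication is immediate. The assignment $x \mapsto x$ for $x \in \mathcal{S}$ and $x \mapsto 1$ for $x \in \mathcal{P} \setminus \mathcal{S}$ extends to an algebra surjection $\phi: k[\mathcal{P}] \twoheadrightarrow k[\mathcal{S}]$, and by comparing the defining formulas for $\eta$ and $\tau$ one sees $\phi \circ \bar{\eta} = \bar{\tau}$. Hence $\bar{\eta}(p) = \bar{\eta}(q)$ implies $\bar{\tau}(p) = \bar{\tau}(q)$.

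For the reverse, assume $\bar{\tau}(p) = \bar{\tau}(q)$. The plan is to close $p$ and $q$ into cycles at $\operatorname{t}(p)$ so that Theorem \ref{iff} applies, and then extract the identity for $\bar{\eta}$. Since every arrow of $Q$ lies on a unit cycle, we may build a ``return path'' $r$ from $\operatorname{h}(p) = \operatorname{h}(q)$ to $\operatorname{t}(p) = \operatorname{t}(q)$ by concatenating, for each arrow $a$ of $p$, the complementary path of a unit cycle containing $a$. Then $rp, rq \in e_{\operatorname{t}(p)} kQ e_{\operatorname{t}(p)}$ are cycles, and from the hypothesis we get $\bar{\tau}(rp) = \bar{\tau}(r) \bar{\tau}(p) = \bar{\tau}(r) \bar{\tau}(q) = \bar{\tau}(rq)$, so in particular $\bar{\tau}(rp) \stackrel{\sigma}{=} \bar{\tau}(rq)$. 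By Theorem \ref{iff} applied to $\bar{\tau}$, $[rp] = [rq]$, and applying Theorem \ref{iff} back in the other direction (to $\bar{\eta}$) gives $\bar{\eta}(rp) \stackrel{\sigma}{=} \bar{\eta}(rq)$. The monomial $\bar{\eta}(r)$ is a nonzero element of the integral domain $k[\mathcal{P}]$, so cancellation yields
\begin{equation*}
\bar{\eta}(p) \stackrel{\sigma}{=} \bar{\eta}(q), \quad\text{i.e.,}\quad \bar{\eta}(p) = \bar{\eta}(q) \, \sigma_{\mathcal{P}}^{\ell}
\end{equation*}
for some $\ell \in \mathbb{Z}$ (interpreted, after clearing, as an equality of monomials in $k[\mathcal{P}]$).

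It remains to show that $\ell = 0$. Applying $\phi$ to the above identity gives $\bar{\tau}(p) = \bar{\tau}(q) \, \sigma_{\mathcal{S}}^{\ell}$, which combined with the hypothesis $\bar{\tau}(p) = \bar{\tau}(q)$ forces $\sigma_{\mathcal{S}}^{\ell} = 1$ in $k[\mathcal{S}]$. Here we use that $\bar{\tau}(q)$ is a nonzero monomial, which holds because by Proposition \ref{arrow simple} every arrow of $q$ lies in some simple matching. Since Proposition \ref{arrow simple} also ensures $\mathcal{S}$ is nonempty, $\sigma_{\mathcal{S}} = \prod_{x \in \mathcal{S}} x$ is a nonconstant monomial, so $\sigma_{\mathcal{S}}^{\ell} = 1$ forces $\ell = 0$. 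Therefore $\bar{\eta}(p) = \bar{\eta}(q)$, as required.

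The main technical step I expect to require care is the cancellation and bookkeeping of the $\stackrel{\sigma}{=}$ relation: one has to interpret it as equality up to a (possibly signed) power of $\sigma_{\mathcal{P}}$ and track this exponent through $\phi$ without leaving the polynomial ring, which is what lets the ``degree comparison'' argument via $\phi$ pin down $\ell = 0$.
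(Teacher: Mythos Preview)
Your proof is correct and follows essentially the same strategy as the paper's: both close $p,q$ into cycles via a return path $r$, use the equivalence between equal class and $\stackrel{\sigma}{=}$ (Proposition~\ref{how}/Proposition~\ref{G}, packaged as Theorem~\ref{iff}) to obtain $\bar{\eta}(p) = \bar{\eta}(q)\,\sigma_{\mathcal{P}}^{\ell}$, and then push through the projection $\phi: k[\mathcal{P}] \to k[\mathcal{S}]$ to force $\ell = 0$. The only difference is organizational: the paper argues by contradiction with a case split on whether $[rp] = [rq]$, whereas you invoke Theorem~\ref{iff} directly to dispose of the unequal-class case up front; your appeal to Proposition~\ref{arrow simple} for $\bar{\tau}(q) \neq 0$ is in fact unnecessary, since $\bar{\tau}(q)$ is a monomial and hence automatically nonzero.
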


\begin{proof}
(i) First suppose $\bar{\eta}(p) = \bar{\eta}(q)$.
Then
$$\bar{\tau}(p) = \bar{\eta}(p)|_{\substack{x = 1: \\ x \not \in \mathcal{S}}} = \bar{\eta}(q)|_{\substack{x = 1: \\ x \not \in \mathcal{S}}} = \bar{\tau}(q).$$

(ii) Now suppose $\bar{\tau}(p) = \bar{\tau}(q)$, and assume to the contrary that $\bar{\eta}(p) \not = \bar{\eta}(q)$.
Let $r$ be a path from $\operatorname{h}(p)$ to $\operatorname{t}(p)$, and let $u,v \in \mathbb{Z}^N$ be the classes of the cycles $rp$ and $rq$.

First suppose $u = v$.
Since $\bar{\eta}(p) \not = \bar{\eta}(q)$, there is some $x \in \mathcal{P} \setminus \mathcal{S}$ and $m \geq 1$ such that $x^m$ divides only one of $\bar{\eta}(p)$, $\bar{\eta}(q)$; say $x^m \mid \bar{\eta}(p)$ and $x^m \nmid \bar{\eta}(q)$.
Thus, since $u = v$,
$$\bar{\eta}(rp) = \bar{\eta}(rq)\sigma_{\mathcal{P}}^n$$
for some $n \geq 1$, by Proposition \ref{how}.
Therefore,
$$\bar{\tau}(r) \bar{\tau}(p) = \bar{\tau}(rp) = \bar{\eta}(rp)|_{\substack{x = 1: \\ x \not \in \mathcal{S}}} = \bar{\eta}(rq)\sigma_{\mathcal{P}}^m |_{\substack{x = 1: \\ x \not \in \mathcal{S}}} = \bar{\tau}(rq)\sigma_{\mathcal{S}}^n = \bar{\tau}(r) \bar{\tau}(q) \sigma_{\mathcal{S}}^n.$$
But this contradicts our assumption that $\bar{\tau}(p) = \bar{\tau}(q)$ since $n \geq 1$.

So suppose $u \not = v$.
Let $s \in e_{\operatorname{h}(p)}kQe_{\operatorname{h}(p)}$ be a cycle with class $v-u \not = 0$.
Then by Proposition \ref{G},
\begin{equation} \label{notyet}
\bar{\tau}(s) \stackrel{\sigma}{\not =} 1.
\end{equation}

Since the cycles $rsp$ and $rq$ both have class $v$, we have
\begin{equation} \label{a2}
\bar{\eta}(rsp) \stackrel{\sigma}{=} \bar{\eta}(rq),
\end{equation}
by Proposition \ref{how}.
Whence
$$\bar{\tau}(s)\bar{\tau}(rq) = \bar{\tau}(s) \bar{\tau}(rp) = \bar{\tau}(rsp) = \bar{\eta}(rsp)|_{\substack{x' = 1: \\ x' \not \in \mathcal{S}}} \stackrel{\sigma}{=} \bar{\eta}(rq)|_{\substack{x' = 1: \\ x' \not \in \mathcal{S}}} = \bar{\tau}(rq).$$
Consequently, $\bar{\tau}(s) \stackrel{\sigma}{=} 1$ since $k[\mathcal{S}]$ is an integral domain, contrary to (\ref{notyet}).
\end{proof}

\begin{Corollary} \label{suffices}
If $A$ is geodesic, then
$$A := kQ/\operatorname{ker}\eta \cong kQ/\operatorname{ker}\tau.$$
In particular, it suffices to only consider the simple matchings of $Q$ to determine the relations of $A$.
\end{Corollary}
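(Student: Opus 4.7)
The claim $A = kQ/\operatorname{ker}\eta \cong kQ/\operatorname{ker}\tau$ reduces to the equality of two-sided ideals $\operatorname{ker}\eta = \operatorname{ker}\tau$ in $kQ$, from which the identity map on $kQ$ descends to the asserted isomorphism. I would establish the two inclusions separately.

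For $\operatorname{ker}\eta \subseteq \operatorname{ker}\tau$, I observe that $\tau$ is obtained from $\eta$ by specializing the non-simple matching variables to $1$: letting $\pi: k[\mathcal{P}] \to k[\mathcal{S}]$ be the $k$-algebra map sending $x \mapsto 1$ for $x \in \mathcal{P} \setminus \mathcal{S}$ and fixing $\mathcal{S}$ pointwise, we have $\tau = M_{|Q_0|}(\pi) \circ \eta$, so this inclusion is immediate.

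For $\operatorname{ker}\tau \subseteq \operatorname{ker}\eta$, which is the content of the corollary, I would proceed as follows. Given $f \in \operatorname{ker}\tau$, the decomposition $f = \sum_{j,k} e_j f e_k$ reduces matters to a single matrix entry at a time: one may assume $f = \sum_i c_i p_i$ is a $k$-linear combination of paths $p_i$ with common tail and common head. The hypothesis $\tau(f) = 0$ then becomes the polynomial identity $\sum_i c_i \bar{\tau}(p_i) = 0$ in $k[\mathcal{S}]$. Each $\bar{\tau}(p_i)$ is a monomial; I would group the $p_i$ according to this monomial value and invoke Proposition \ref{A} to conclude that the $\bar{\eta}$-value is likewise constant on each group. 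Linear independence of distinct monomials in $k[\mathcal{S}]$ forces the coefficient sum within each group to vanish, and the very same cancellations yield $\sum_i c_i \bar{\eta}(p_i) = 0$, i.e., $f \in \operatorname{ker}\eta$.

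The crux is Proposition \ref{A} itself, which has already been established; what remains is routine linear algebra. One small bookkeeping point concerns trivial paths: if $f$ includes a scalar multiple of some $e_i$, we need this term to sit in its own group so that Proposition \ref{A} (which requires positive-length paths) can be applied to the rest. By Proposition \ref{arrow simple}, every arrow of $Q$ lies in some simple matching, so the $\bar{\tau}$-value of any positive-length path is a nontrivial monomial in the simple-matching variables and cannot equal $\bar{\tau}(e_i) = 1$. Hence the $e_i$ term is automatically isolated, and the argument goes through to deliver the desired isomorphism.
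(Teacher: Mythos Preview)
Your proposal is correct and takes the same approach as the paper, whose entire proof reads ``Follows from Proposition \ref{A}.'' You have simply spelled out the routine linear-algebra reduction (group paths by $\bar{\tau}$-monomial, apply Proposition \ref{A} to equate $\bar{\eta}$-values within each group, and use vanishing coefficient sums) together with the trivial-path bookkeeping via Proposition \ref{arrow simple}, all of which the paper leaves implicit.
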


\begin{proof}
Follows from Proposition \ref{A}.
\end{proof}

\begin{Corollary} \label{injective}
If $A$ is geodesic, then the algebra homomorphism $\tau: kQ \to M_{|Q_0|}(k[\mathcal{S}])$ induces an injective algebra homomorphism on $A$,
$$\tau: A \hookrightarrow M_{|Q_0|}(k[\mathcal{S}]).$$
Consequently, the ring $R$ given in (\ref{R def}) is isomorphic to the center of $A$.
\end{Corollary}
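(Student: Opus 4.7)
The plan is to reduce both claims to Corollary \ref{suffices} and to injectivity of $\bar{\tau}$ on each corner subalgebra $e_iAe_i$. The first statement is essentially a restatement of Corollary \ref{suffices}: the isomorphism $A = kQ/\ker\eta \cong kQ/\ker\tau$ says exactly that $\tau\colon kQ \to M_{|Q_0|}(k[\mathcal{S}])$ descends to an injective algebra homomorphism $\tau\colon A \hookrightarrow M_{|Q_0|}(k[\mathcal{S}])$, and there is nothing further to verify.

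For the identification of $R$ with the center of $A$, I would transport everything along this embedding. Any $z \in A$ commutes with each vertex idempotent $e_i$, so $z = \sum_{i \in Q_0} z_i$ with $z_i \in e_iAe_i$, and $\tau(z)$ is the diagonal matrix with entries $\bar{\tau}(z_i)$. If $z$ is central and $a\colon i \to j$ is an arrow, then $az_i = z_j a$ in $A$, hence $\bar{\tau}(a)\bar{\tau}(z_i) = \bar{\tau}(a)\bar{\tau}(z_j)$ in $k[\mathcal{S}]$. By Proposition \ref{arrow simple} every arrow lies in some simple matching, so $\bar{\tau}(a)$ is a nonzero monomial in the integral domain $k[\mathcal{S}]$; cancelling it and using connectedness of $Q$ shows that all $\bar{\tau}(z_i)$ coincide with a single element $f$. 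By construction $f \in \bar{\tau}(e_iAe_i)$ for every $i$, hence $f \in R$.

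For the converse, given $f \in R$ I would use injectivity of $\bar{\tau}$ on each $e_iAe_i$ (inherited from the injectivity of $\tau$ on $A$) to pick the unique $z_i \in e_iAe_i$ with $\bar{\tau}(z_i) = f$. The element $z := \sum_i z_i$ is then central: for any arrow $a\colon i \to j$, both $az_i$ and $z_j a$ lie in $e_jAe_i$ and have the same $\bar{\tau}$-image $\bar{\tau}(a) f$ by commutativity of $k[\mathcal{S}]$, so injectivity of $\bar{\tau}$ on $e_jAe_i$ yields $az_i = z_j a$; centrality of $z$ then follows by linearity and induction on path length. The assignments $f \mapsto z$ and $z \mapsto \bar{\tau}(z_i)$ are mutually inverse $k$-algebra homomorphisms witnessing $R \cong Z(A)$.

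The main obstacle worth flagging is the nonvanishing $\bar{\tau}(a) \neq 0$ that allows the crucial cancellation; this is the substantive role played by Proposition \ref{arrow simple}. Once that is in hand, the remainder of the argument rests on the injectivity coming from Corollary \ref{suffices} together with the commutativity of the polynomial ring $k[\mathcal{S}]$.
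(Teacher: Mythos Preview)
Your argument is correct and follows the paper's approach: both obtain injectivity of $\tau$ on $A$ directly from Corollary~\ref{suffices}, and the paper simply leaves the center identification implicit while you spell out the standard diagonal-matrix computation. One minor point: the appeal to Proposition~\ref{arrow simple} is unnecessary, since by definition $\bar{\tau}(a) = \prod_{x \in \mathcal{S}:\, x \ni a} x$ is always a monomial in $k[\mathcal{S}]$ (equal to $1$ if no simple matching contains $a$) and hence never zero.
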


\begin{proof}
By the definition of $A$, two paths $p,q \in e_jAe_i$ satisfy $\bar{\eta}(p) = \bar{\eta}(q)$ if and only if $p = q$.
Furthermore, if $A$ is geodesic, then $\bar{\tau}(p) = \bar{\tau}(q)$ if and only if $p = q$, by Corollary \ref{suffices}.
\end{proof}

\section{Central geometry and endomorphism ring structure}

Let $A = kQ/ \ker \eta$ be a ghor algebra, and let $R$ and $S$ be as defined in (\ref{R def}) and (\ref{S def}).
Recall that $R$ is the center of $A$ whenever $A$ is geodesic, by Corollary \ref{injective}. 
Throughout this section, set $\overbar{p} := \bar{\tau}(p)$ for $p \in e_i kQ e_j$, and $\sigma := \sigma_{\mathcal{S}} = \prod_{x \in \mathcal{S}} x$.

\subsection{Nonnoetherian central geometry} \label{ncg}

In this section we will show that the centers of geodesic ghor algebras are typically nonnoetherian, and we will use the notion of depiction (Definition \ref{depiction definition}) to make sense of their geometry.

\begin{Lemma} \label{finitely generated}
The cycle algebra $S$ is a finitely generated $k$-algebra.
\end{Lemma}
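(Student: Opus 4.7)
The plan is to exhibit an explicit finite generating set for $S$, namely the $\bar{\tau}$-images of the (finitely many) simple cycles in $Q$.

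Since $S = k[\cup_{i \in Q_0} \bar{\tau}(e_iAe_i)]$ is a monomial subalgebra of $k[\mathcal{S}]$, it is generated as a $k$-algebra by the set $\{\bar{\tau}(c) : c \text{ is a cycle in } Q \text{ at some vertex}\}$. Call a cycle $c = a_n \cdots a_1$ at vertex $v_0$ \emph{simple} if the vertices $v_k := \operatorname{h}(a_k)$ for $k = 0, 1, \ldots, n-1$ are pairwise distinct, so that $v_0$ is the only repeated vertex of $c$. Because $Q$ is a finite quiver, it has only finitely many simple cycles, and therefore it suffices to show that $\bar{\tau}(c)$ factors as a product of the form $\prod_j \bar{\tau}(s_j)$ for simple cycles $s_j$ whenever $c$ is an arbitrary cycle in $Q$.

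I would prove this by induction on the length of $c$. If $c$ is not simple, there exist indices $0 \leq i < j \leq n$ with $(i,j) \neq (0,n)$ and $v_i = v_j$; choose such a pair with $j - i$ minimal. Then $s := a_j \cdots a_{i+1}$ is a simple cycle at $v_i$ by the minimality of $j-i$, and the two remaining segments $a_i \cdots a_1$ (from $v_0$ to $v_i$) and $a_n \cdots a_{j+1}$ (from $v_j = v_i$ back to $v_0$) concatenate to a cycle $c'$ at $v_0$ of length strictly less than that of $c$. Since $\tau$ is an algebra homomorphism, $\bar{\tau}$ is multiplicative along composable concatenations, and combining this with the commutativity of $k[\mathcal{S}]$ yields
\[
\bar{\tau}(c) \;=\; \bar{\tau}(a_n \cdots a_{j+1})\,\bar{\tau}(s)\,\bar{\tau}(a_i \cdots a_1) \;=\; \bar{\tau}(s)\,\bar{\tau}(c').
\]
The inductive hypothesis applied to $c'$ then gives the desired factorization of $\bar{\tau}(c)$ into products of $\bar{\tau}(s_j)$ for simple cycles, and hence $S$ is generated by the finite set $\{\bar{\tau}(s) : s \text{ is a simple cycle in } Q\}$.

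The argument is essentially elementary graph-theoretic cycle-peeling, and I do not expect a serious obstacle. The only mildly delicate point is the endpoint-matching check $\operatorname{t}(a_{j+1}) = v_j = v_i = \operatorname{h}(a_i)$, which is what makes the concatenation $(a_n \cdots a_{j+1})(a_i \cdots a_1)$ a valid path in $Q$ and ensures that its $\bar{\tau}$-image is compatible with the matrix structure of $\tau$ (so that the off-diagonal idempotents compose correctly to yield a single scalar monomial).
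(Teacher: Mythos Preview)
Your proposal is correct and takes essentially the same approach as the paper: the paper's proof is the one-line observation that $Q$ is finite and hence has only finitely many cycles without cyclic proper subpaths, and you have simply spelled out the underlying cycle-peeling induction in full detail. (One terminological note: the paper already uses ``simple'' for a type of perfect matching, so you may wish to rename your ``simple cycles'' to avoid a clash.)
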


\begin{proof}
The dimer quiver $Q$ is finite, and so there are only a finite number of cycles in $Q$ that do not have cyclic proper subpaths.
\end{proof}

\begin{Lemma} \label{an}
Let $A$ be a ghor algebra.
Then $R$ is noetherian if and only if for each monomial $g \in S$, there is an $n \geq 1$ such that $g^n$ is in $R$.
\end{Lemma}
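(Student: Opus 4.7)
The plan is to prove each direction separately, with the forward one requiring the full ghor-algebra structure.

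For the reverse direction, I will invoke the Artin--Tate lemma. By Lemma \ref{finitely generated}, the cycle algebra $S$ is a finitely generated $k$-algebra, with monomial generators $s_1,\ldots,s_m$ arising from the $\bar\tau$-images of the cycles of $Q$ having no cyclic proper subpath. The hypothesis supplies $n_j\geq1$ with $s_j^{n_j}\in R$, exhibiting each $s_j$ as a root of the monic polynomial $t^{n_j}-s_j^{n_j}\in R[t]$ and hence as integral over $R$. Thus $S$ is integral over $R$ and finitely generated as an $R$-algebra, so it is finitely generated as an $R$-module. The Artin--Tate lemma then gives that $R$ is a finitely generated $k$-algebra, and Hilbert's basis theorem yields noetherianity.

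For the forward direction, suppose $R$ is noetherian and let $g=\bar\tau(c)\in S$ be a monomial with $c$ a cycle at some vertex $j\in Q_0$. The first step is to produce an integer $L\geq 0$ with $\sigma^L g\in R$. Here I will exploit that $\sigma\in R$: every vertex lies on at least one unit cycle, and every unit cycle has $\bar\tau$-image $\sigma$. For each vertex $i\in Q_0$, I will choose paths $p\colon i\to j$ and $q\colon j\to i$ such that the round trip $pq$ at $j$ is contractible in $\Sigma$; concretely, assembling $p$ and $q$ from the boundaries of a chain of adjacent unit cycles connecting $i$ to $j$ makes $pq$ homotopic to a product of unit cycles. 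Lemma \ref{Z}.i then gives $\bar\tau(pq)=\sigma^{L_i}$ for some $L_i\geq 0$, and the cycle $qcp$ at $i$ has $\bar\tau(qcp)=\sigma^{L_i}g\in\bar\tau(e_iAe_i)$. Setting $L:=\max_iL_i$ and using that $\sigma\in\bar\tau(e_iAe_i)$ for every $i$, I obtain $\sigma^L g\in R$.

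The final step is to upgrade $\sigma^L g\in R$ to $g^n\in R$ for some $n\geq 1$. Since the previous step applies uniformly to every monomial in $S$, one obtains $R[\sigma^{-1}]=S[\sigma^{-1}]$, so $R$ and $S$ share the same fraction field and hence the same Krull dimension as finitely generated $k$-algebra domains. The chain $R\subseteq R[g]\subseteq S$ then lies in one fraction field, and combining this with the fact that the integral closure of $R$ in $\mathrm{Frac}(R)$ is finitely generated as an $R$-module (E.~Noether's theorem), I expect to force $g$ to be integral over $R$, which for a monomial $g$ is equivalent to $g^n\in R$. The main obstacle will be the topological construction in the middle step: routing $p$ and $q$ through a chain of adjacent faces so that $pq$ is contractible while respecting arrow directions in the dimer quiver.
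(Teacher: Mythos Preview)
Your reverse direction via Artin--Tate is correct and is slicker than the paper's argument: the paper instead builds an explicit finite generating set for $R$ by hand, writing $R = k[X, XY \cap R]$ for certain finite sets $X,Y$ of monomials. Your route avoids this bookkeeping entirely.

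Your forward direction, however, has a genuine gap in its final step. You correctly show $\sigma^L g \in R$, and this does give $R[\sigma^{-1}] = S[\sigma^{-1}]$. But the appeal to ``E.~Noether's theorem'' on finiteness of integral closure is doubly problematic: first, that theorem requires $R$ to be a finitely generated $k$-algebra (or at least a Nagata ring), which is precisely what is not yet known; second, even granting it, you have not placed $g$ inside the integral closure of $R$, so finiteness of $\overline{R}$ over $R$ tells you nothing about $g$. A concrete warning: with $R = k[x,xy] \subset S = k[x,y]$ and $\sigma = x$, one has $R$ noetherian and $R[\sigma^{-1}] = S[\sigma^{-1}]$, yet $y$ is not integral over $R$ and no power of $y$ lies in $R$. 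So the bare hypotheses ``$R$ noetherian and $\operatorname{Frac}R = \operatorname{Frac}S$'' are insufficient; you must use more of the structure.

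The missing observation is that your construction actually gives $\sigma^L g^n \in R$ for \emph{every} $n \geq 1$ with the \emph{same} $L$: the cycle $qc^np$ at $i$ has $\bar\tau$-image $\sigma^{L_i} g^n$. From this you can either argue as the paper does---the chain of ideals $(1,g,\ldots,g^n)\sigma^L R$ is strictly ascending if no $g^n$ lies in $R$, contradicting noetherianity---or, in the spirit of your approach, note that $R[g] \subseteq \sigma^{-L} R \subset \operatorname{Frac}R$ sits inside a cyclic $R$-module, hence is finitely generated over the noetherian ring $R$, so $g$ is integral over $R$; then your monomial-cancellation remark yields $g^n \in R$. Either route closes the gap without any appeal to normalization theorems.
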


\begin{proof} \

$\Rightarrow$: First suppose there is a monomial $g \in S$ such that $g^n \not \in R$ for each $n \geq 1$.
By Lemma \ref{Z}.i, there is an $m \geq 1$ such that for each $n \geq 1$, $g^n \sigma^m$ is in $R$.
Consider the infinite ascending chain of ideals
$$0 \subseteq \sigma^mR \subseteq (1,g) \sigma^m R \subseteq (1,g,g^2)\sigma^m R \subseteq (1,g,g^2,g^3) \sigma^m R \subseteq \cdots.$$

We claim that each inclusion is strict.
Assume to the contrary that there is some $n \geq 1$ and $r_0, \ldots, r_{n-1} \in R$ for which
$$g^n \sigma^m = \sum_{i = 0}^{n-1} r_i g^i \sigma^m.$$
Then, since $k[\mathcal{S}]$ is an integral domain,
$$g^n - \sum_{i = 1}^{n-1} r_i g^i = r_0 \in R.$$
Furthermore, $R$ is generated by monomials in the polynomial ring $k[\mathcal{S}]$, and so each monomial summand of the polynomial $r_0$ is in $R$.
Thus, since $g^n \not \in R$, there is some $1 \leq i \leq n-1$ such that $g^{n-i}$ is a monomial summand of $r_i$.
But then $r_i$ is not in $R$, a contradiction.
Therefore $R$ is nonnoetherian.

$\Leftarrow$: Now suppose that for each monomial $g \in S$, there is an $n \geq 1$ such that $g^n$ is in $R$.
By Lemma \ref{finitely generated}, $S$ is generated by a finite number of cycles $s_j$,
\begin{equation} \label{Sfin}
S = k[\overbar{s}_j \, | \,j \in [1, \ell]].
\end{equation}
By assumption, for each $j \in [0, \ell]$, there is a minimum $n_j \geq 1$ such that $\overbar{s}_j^{n_j}$ is in $R$.
Consider the two sets of cycles
\begin{align*}
X & := \left\{ \text{cycles in $R$ that contain at most one of each vertex} \right\}, \\
Y & := \left\{ \prod_{j = 1}^{\ell} \overbar{s}_j^{m_j} \in S \setminus R \ | \ m_j \in [0, n_j-1] \right\}.
\end{align*}

We claim that
$$R = k[X, XY \cap R].$$
Indeed, let $r \in X$, $s_1, s_2 \in Y$, and suppose $(rs_1)s_2 \in (XY \cap R)Y \cap R$.
Then
$$(rs_1)s_2 = r(s_1s_2) \in \left\{ \begin{array}{ll} XY \cap R & \text{ if } s_1s_2 \not \in R\\ k[X] & \text{ if } s_1s_2 \in R \end{array} \right.$$
Consequently,
$$(XY \cap R)Y \cap R \subseteq k[X, XY \cap R],$$
proving our claim.
But $|Y|< \infty$, and $|X| < \infty$ since $Q$ has a finite number of vertices.
Whence $|XY \cap R| \leq |X| |Y| < \infty$.
Thus $R = k[X, XY \cap R]$ is a finitely generated $k$-algebra, and therefore noetherian.
\end{proof}

If $\Sigma$ is a torus, then a ghor algebra $A$ is geodesic if and only if it is noetherian, if and only if its center $R$ is noetherian, if and only if $A$ is a finitely generated $R$-module \cite[Theorem 1.1]{B2}.
If $\Sigma$ is hyperbolic, then only one direction of the implication survives:

\begin{Lemma}
If $R$ is noetherian, then
\begin{enumerate}
 \item $A$ is a finitely generated $R$-module;
 \item $A$ is noetherian; and
 \item $A$ is geodesic.
\end{enumerate}
\end{Lemma}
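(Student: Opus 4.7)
The plan is to establish the three conclusions in the order (3), (1), (2). Part (1) will use the injectivity of $\tau: A \to M_{|Q_0|}(k[\mathcal{S}])$ given by Corollary \ref{injective}, and this is only available once $A$ is known to be geodesic; parts (1) and (2) will then reduce to standard finite-generation and noetherian-descent arguments. The engine for all three parts is Lemma \ref{an}, which characterizes noetherianness of $R$ in terms of powers of monomials in $S$ landing in $R$.

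For (3), I would argue by contrapositive via Lemma \ref{an}: assuming $A$ is not geodesic, I would exhibit a monomial $g \in S$ with $g^n \not\in R$ for every $n \geq 1$. The failure of the geodesic condition concerns some side $k \in [2N]$ and amounts to either the absence of a geodesic cycle $\gamma_k$ in the class $(\delta_{k\ell} - \delta_{k+N,\ell})_{\ell}$, or the absence of a set of parallel geodesic cycles $\{c_i\}_{i \in Q_0}$ containing such a $\gamma_k$. In both scenarios, the strategy is to pick a cycle $c$ at some vertex $i$ whose $\bar{\tau}$-image involves simple-matching variables that are necessarily missing from every cycle based at some other vertex $j$ in a compatible class: Theorem \ref{iff} translates the class obstruction into a monomial-divisibility obstruction, and the correspondence between subdivisions and simple matchings (Lemma \ref{columns and pillars}, Proposition \ref{arrow simple}) pinpoints which variables are forced to be absent. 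The $\bar{\tau}$-image $g$ of the witness cycle then cannot have any positive power in $\cap_{j \in Q_0} \bar{\tau}(e_j A e_j)$, and hence no power lies in $R$.

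For (1), with (3) established, I would first show that $\bar{\tau}(e_i A e_i) \subset S$ is a finitely generated $R$-module for every $i$. By Lemma \ref{finitely generated}, $\bar{\tau}(e_i A e_i)$ is generated as a $k$-algebra by finitely many monomials $\bar{\tau}(s_1), \ldots, \bar{\tau}(s_\ell)$, and Lemma \ref{an} gives $n_q \geq 1$ with $\bar{\tau}(s_q)^{n_q} \in R$; the finite set $\{\prod_q \bar{\tau}(s_q)^{m_q} : 0 \le m_q < n_q\}$ then generates $\bar{\tau}(e_i A e_i)$ over $R$, and the same holds for $e_i A e_i$ via the injection $\tau$. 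For $j \neq i$, pick any path $q_0$ from $j$ to $i$ in $Q$ (which exists since $Q$ is strongly connected: every vertex lies on a unit cycle). Right multiplication by $q_0$ defines an $R$-linear map $e_j A e_i \to e_j A e_j$, $p \mapsto p q_0$; this map is injective because $\bar{\tau}(p_1 q_0) = \bar{\tau}(p_2 q_0)$ in the integral domain $k[\mathcal{S}]$ forces $\bar{\tau}(p_1) = \bar{\tau}(p_2)$, hence $p_1 = p_2$ by Corollary \ref{injective}. Thus $e_j A e_i$ embeds in the finitely generated, and therefore noetherian, $R$-module $e_j A e_j$, and so is itself finitely generated. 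Summing over the finitely many vertex pairs yields that $A$ is finitely generated over $R$.

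For (2), a finitely generated module over a commutative noetherian ring is noetherian on both sides, so $A$ is noetherian as a left and right $R$-module; since every one-sided ideal of $A$ is in particular an $R$-submodule, $A$ is both left- and right-noetherian as a ring. The main obstacle throughout is part (3): the construction of the witness monomial when the parallel-family condition fails is topologically delicate, requiring one to simultaneously track classes in $\mathbb{Z}^N$ via Theorem \ref{iff} alongside the column/pillar structure of subdivisions from Lemma \ref{columns and pillars}. Parts (1) and (2) are then essentially bookkeeping once injectivity of $\tau$ is secured.
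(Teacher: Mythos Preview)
Your treatment of (2) and (3) is in the same spirit as the paper's: both argue (3) by contrapositive through Lemma \ref{an}, exhibiting a monomial in $S$ no power of which lies in $R$, and both deduce (2) from (1) by the standard noetherian-module argument. The paper's construction for (3) is more concrete than your sketch: it takes a cycle $p$ at the obstructing vertex $i$ parallel to the bad direction whose lift has no cyclic subpaths, forms $q$ by deleting a cyclic subpath from a cyclic permutation of $p^+$, and then argues $\overbar{q}^n \in S \setminus R$ for all $n \geq 1$. One caution: your sketch invokes Theorem \ref{iff} and Proposition \ref{arrow simple}, both of which carry the hypothesis that $A$ is geodesic, exactly what fails in the contrapositive; the paper's proof of (3) also cites Theorem \ref{iff} at this step, so the circularity concern is not unique to you, but your additional appeal to Proposition \ref{arrow simple} compounds it, and you should be explicit about which direction of Theorem \ref{iff} (namely Proposition \ref{how}, which does not need geodesic) you actually use.

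The genuine divergence is in (1). You reorder to prove (3) first so that Corollary \ref{injective} is available, then show each corner $e_iAe_i$ is a finitely generated $R$-module and embed the off-diagonal corners $e_jAe_i$ into diagonal ones via right multiplication by a fixed path. The paper instead gives a direct uniform path-length bound that does not rely on (3): with the finite generating set $\{\overbar{s}_j\}$ of $S$ from (\ref{Sfin}) and the exponents $n_j$ supplied by Lemma \ref{an}, any path of length at least $(L+1)\ell m$ (where $L$ is the maximal length of a path with no cyclic proper subpath and $m = \max_j n_j$) contains a cyclic subpath whose $\bar{\tau}$-image already lies in $R$, so paths of bounded length generate $A$ over $R$. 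The paper's route is shorter and decouples (1) from (3); yours is correct but more roundabout, and it makes (1) hostage to the delicate topological argument in (3).
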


\begin{proof}
Suppose $R$ is noetherian.

(1): Recall the finite generating set (\ref{Sfin}) of $S$.
By Lemma \ref{an}, there are minimum integers $n_i \geq 1$ satisfying $\overbar{s}_i^{n_i} \in R$.
Set
$$m := \operatorname{max}\{n_1, \ldots, n_{\ell} \}.$$
Let $L$ be the length of the longest path in $Q$ with no cyclic proper subpath.
Then each path of length $\geq (L+1)\ell m$ will contain a cyclic subpath whose $\bar{\tau}$-image is in $R$.
Therefore $A$ is generated as an $R$-module by the set of all paths in $Q$ of length at most $(L+1)\ell m$.

(2): Follows from (1) and the assumption that $R$ is noetherian. %(Cite Goodearl's book...)

(3): Suppose $A$ is not geodesic.
Then there is a vertex $i \in Q_0$ and direction $k \in [2N]$ for which every cycle at $i$ parallel to $k$ is not geodesic.
Let $p$ be a cycle at $i$ parallel to $k$, whose lift $p^+$ to the cover $Q^+$ contains no cyclic subpaths modulo $\ker \eta$ (though some cyclic permutation of $p^+$ necessarily contains a cyclic subpath since $p$ is not geodesic).
Let $q^+$ be formed from a cyclic permutation of $p^+$ by removing at least one cyclic subpath.
Then $\overbar{q}^n$ is in $S \setminus R$ for all $n \geq 1$, by Theorem \ref{iff}. 
Therefore $R$ is nonnoetherian by Lemma \ref{an}.
\end{proof}

In the following we use the assumption that $k$ is uncountable.

\begin{Lemma} \label{surjective}
The morphisms
\begin{equation*} \label{max surjective}
\begin{array}{rcl}
\kappa_{S/\hat{Z}}: \operatorname{Max}S \to \operatorname{Max}R, & \ & \mathfrak{n} \mapsto \mathfrak{n} \cap R,\\
\iota_{S/R}: \operatorname{Spec}S \to \operatorname{Spec}R, & & \mathfrak{q} \mapsto \mathfrak{q} \cap R,
\end{array}
\end{equation*}
are well-defined and surjective.
\end{Lemma}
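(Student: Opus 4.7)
Well-definedness of $\iota_{S/R}$ is immediate: for any ring inclusion, the contraction $\mathfrak{q}\cap R$ of a prime is prime, since $R/(\mathfrak{q}\cap R)$ embeds in the integral domain $S/\mathfrak{q}$. For $\kappa_{S/R}$, since $S$ is a finitely generated $k$-algebra by Lemma \ref{finitely generated} and $k$ is algebraically closed, Hilbert's Nullstellensatz gives $S/\mathfrak{n}\cong k$ for any $\mathfrak{n}\in\operatorname{Max}S$; then $R/(\mathfrak{n}\cap R)$ is a $k$-subalgebra of $k$, hence equals $k$, and $\mathfrak{n}\cap R$ is maximal.

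For surjectivity, the plan is first to establish the Nullstellensatz-type statement that $R/\mathfrak{m}=k$ for every $\mathfrak{m}\in\operatorname{Max}R$, which is where uncountability of $k$ enters. Since $\mathcal{S}$ is finite, $R\subseteq k[\mathcal{S}]$ has at most countable $k$-dimension; if some $t\in R/\mathfrak{m}$ lay outside $k$, then the family $\{(t-a)^{-1}:a\in k\}$ would be a $k$-linearly independent subset of the field $R/\mathfrak{m}$ of uncountable cardinality $|k|$, contradicting countability. Granted $R/\mathfrak{m}=k$, producing $\mathfrak{n}\in\operatorname{Max}S$ above $\mathfrak{m}$ reduces to verifying $\mathfrak{m}S\ne S$, since any maximal ideal of the nonzero finitely generated $k$-algebra $S/\mathfrak{m}S$ pulls back to $\operatorname{Max}S$ with contraction exactly $\mathfrak{m}$.

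To prove $\mathfrak{m}S\ne S$, I would invoke the partial integrality $R\subseteq S$: because $\sigma\in R$ and each $k$-algebra generator $g_j$ of $S$ satisfies $g_j^{n_j}\sigma^{m_j}\in R$ (from the proof of Lemma \ref{an}), the localization $S[\sigma^{-1}]$ is module-finite over $R[\sigma^{-1}]$. When $\sigma\notin\mathfrak{m}$, lying-over for the integral extension $R[\sigma^{-1}]\subseteq S[\sigma^{-1}]$ supplies the desired lift; the same argument applied to localizations at $R\setminus\mathfrak{p}$ gives surjectivity of $\iota_{S/R}$ for primes $\mathfrak{p}$ with $\sigma\notin\mathfrak{p}$. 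The main obstacle is the case $\sigma\in\mathfrak{m}$ (and analogously $\sigma\in\mathfrak{p}$), where $\sigma$ cannot be inverted. Here one must exploit the definition $R=k[\cap_{i\in Q_0}\bar\tau(e_iAe_i)]$ directly, extending $\phi:R\to k$ to $S\to k$ by picking values for the generators $g_j$ that are consistent with the $\sigma$-relations $g_j^{n_j}\sigma^{m_j}=r_j$ (with $\phi(\sigma)=0$ forcing $\phi(r_j)=0$) together with the further polynomial identities among the $g_j$ coming from $\ker\tau$; algebraic closure of $k$ furnishes the required roots once these finitely many constraints are shown to be simultaneously solvable.
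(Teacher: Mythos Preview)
Your well-definedness arguments are correct and amount to what the paper cites from \cite{B3}. The genuine gap is in surjectivity, specifically the case $\sigma \in \mathfrak{m}$ (and analogously $\sigma \in \mathfrak{p}$ for $\iota_{S/R}$). Your final paragraph is not an argument: saying one must ``pick values for the generators $g_j$ consistent with the $\sigma$-relations and the further identities coming from $\ker\tau$'' and that algebraic closure ``furnishes the required roots once these finitely many constraints are shown to be simultaneously solvable'' is a restatement of the goal, since simultaneous solvability over $k$ is exactly the assertion $S\mathfrak{m} \ne S$. The relation $g_j\sigma^{m_j} \in R$ gives no leverage once $\phi(\sigma)=0$: it only forces $\phi(g_j\sigma^{m_j})=0$, which places no constraint on the would-be value of $g_j$. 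The $\sigma \notin \mathfrak{m}$ case you handle is fine (indeed $S[\sigma^{-1}] = R[\sigma^{-1}]$, not merely module-finite), but that is the easy half.

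The paper proceeds differently and avoids the case split entirely. For $\kappa_{S/R}$ it asserts in one line that $S\mathfrak{m}$ is a proper ideal of $S$ ``since $S$ is a subalgebra of the polynomial ring $k[\mathcal{S}]$''; granting this, any $\mathfrak{n} \in \operatorname{Max}S$ containing $S\mathfrak{m}$ has $\mathfrak{n} \cap R \supseteq \mathfrak{m}$, and $\mathfrak{n} \cap R$ is maximal by part (i), whence $\mathfrak{n} \cap R = \mathfrak{m}$. For $\iota_{S/R}$ the paper does not argue directly at all: it invokes \cite[Lemma~3.6]{B3}, which states that for any subalgebra $C$ of a finitely generated algebra $D$ over an uncountable field, surjectivity of $\operatorname{Max}D \to \operatorname{Max}C$ is equivalent to surjectivity of $\operatorname{Spec}D \to \operatorname{Spec}C$. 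This is where uncountability of $k$ enters in the paper---not, as in your route, to force $R/\mathfrak{m} = k$.
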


\begin{proof}
(i) We first claim that the map $\kappa_{S/R}$ is well-defined.
Indeed, let $\mathfrak{n}$ be in $\operatorname{Max}S$.
By Lemma \ref{finitely generated}, $S$ is of finite type, and by assumption $k$ is algebraically closed.
Therefore the intersection $\mathfrak{n} \cap R$ is a maximal ideal of $R$ (e.g., \cite[Lemma 2.1]{B3}).

(ii) We claim that $\kappa_{S/R}$ is surjective.
Fix $\mathfrak{m} \in \operatorname{max}R$.
Then $S\mathfrak{m}$ is a proper ideal of $S$ since $S$ is a subalgebra of the polynomial ring $k[\mathcal{S}]$.
Thus, since $S$ is noetherian, there is a maximal ideal $\mathfrak{n} \in \operatorname{Max}S$ containing $S\mathfrak{m}$.
Whence,
$$\mathfrak{m} \subseteq S\mathfrak{m} \cap R \subseteq \mathfrak{n} \cap R.$$
But $\mathfrak{n} \cap R$ is a maximal ideal of $R$ by Claim (i).
Therefore $\mathfrak{m} = \mathfrak{n} \cap R$.

(iii) It is clear that the map $\iota_{S/R}$ is well-defined.
Finally, we claim that $\iota_{S/R}$ is surjective.
By \cite[Lemma 3.6]{B3}, if $D$ is a finitely generated algebra over an uncountable field $k$, and $C \subseteq D$ is a subalgebra, then $\iota_{D/C}: \operatorname{Spec}D \to \operatorname{Spec}C$ is surjective if and only if $\kappa_{D/C}:\operatorname{Max}D \to \operatorname{Max}C$ is surjective.
Therefore, $\iota_{S/R}$ is surjective by Claim (ii).
\end{proof}

\begin{Lemma} \label{each vertex}
Let $\mathfrak{n} \in \operatorname{Max}S$ be a maximal ideal.
Suppose that each cycle $p + \ker \eta$ that passes through each vertex of $Q$ satisfies $\overbar{p} \in \mathfrak{n}$.
Then the localization $R_{\mathfrak{n} \cap R}$ is nonnoetherian.
\end{Lemma}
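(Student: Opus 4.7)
The plan is to construct a strictly ascending chain of ideals in $R_{\mathfrak{m}}$, where $\mathfrak{m} := \mathfrak{n} \cap R$. Since $Q$ is strongly connected, there is a closed walk $p$ at some vertex $i_0 \in Q_0$ visiting every vertex of $Q$; set $\alpha := \bar\tau(p)$. By the hypothesis $\alpha \in \mathfrak{n}$, and cyclic rotation of $p$ yields spanning cycles based at every vertex all with $\bar\tau$-image $\alpha$, so $\alpha \in R \cap \mathfrak{n} = \mathfrak{m}$. For any cycle $c$ at $i_0$ with image $g := \bar\tau(c)$, the concatenation $c^n p$ is again spanning, so $g^n \alpha = \bar\tau(c^n p) \in \mathfrak{m}$ for every $n \geq 0$. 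I will consider the chain $I_n := (\alpha, g\alpha, g^2\alpha, \ldots, g^n \alpha) R_{\mathfrak{m}}$.

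Suppose toward a contradiction that $g^{n+1}\alpha \in I_n$ for some $n$. Clearing denominators in $R$ and canceling $\alpha$ in the integral domain $k[\mathcal{S}]$ produces $T g^{n+1} = \sum_{i = 0}^n r_i g^i$ in $k[\mathcal{S}]$ with $T \in R \setminus \mathfrak{m}$ and $r_i \in R$. Since $R$ is a monomial subalgebra of $k[\mathcal{S}]$, expand $T = \sum_\mu T_\mu \mu$ over monomials of $R$; matching coefficients of each monomial $\mu g^{n+1}$ on both sides (distinct across $\mu$) forces: for every $\mu$ in the support of $T$, there exists $i \leq n$ with $\mu g^{n+1-i} \in R$. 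Because $\phi(T) \neq 0$ where $\phi : R \to R/\mathfrak{m} = k$ is the quotient, some $\mu_0$ in the support of $T$ satisfies $\phi(\mu_0) \neq 0$, giving $\mu_0 \in R \setminus \mathfrak{m}$ and $k \geq 1$ with $\mu_0 g^k \in R$---equivalently, $g^k \in R_{\mathfrak{m}}$.

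The chain is then strictly ascending once $c$ is chosen so that no power of $g$ lies in $R_{\mathfrak{m}}$. The hypothesis forces the set of null simple matchings $\mathcal{S}_0 := \{x \in \mathcal{S} : x \in \mathfrak{n}\}$ to be nonempty, for any spanning cycle image in $\mathfrak{n}$ must contain a variable in $\mathfrak{n}$; equivalently, the subquiver $Q^* \subseteq Q$ obtained by deleting all arrows in null simple matchings admits no closed walk through every vertex. I pick $c$ to be a cycle at $i_0$ inside $Q^*$ that misses some vertex $j \in Q_0$, so $g$ is supported only on live matching variables. An arrow contributes a null variable to a $\bar\tau$-image precisely when it lies in a null matching, so any cycle whose image uses only live variables lies entirely in $Q^*$; in particular, if $\mu_0 g^k \in R$ with $\mu_0 \notin \mathfrak{m}$, then cycles realizing the live-supported image $\mu_0 g^k$ at every vertex of $Q$ all lie in $Q^*$. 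Concatenating these cycles via connecting $Q^*$-paths then produces a closed walk in $Q^*$ visiting every vertex, contradicting the nonexistence of such in $Q^*$. The main obstacle is the final assembly of these $Q^*$-cycles at distinct vertices into a single spanning closed walk in $Q^*$, which requires enough connectivity of $Q^*$ between those vertices; this is where Theorem \ref{iff} enters together with the observation that $\sigma \in \mathfrak{n}$ (being divisible by any variable in $\mathcal{S}_0$), so $\sigma$-corrections in cycle classes preserve the live/null dichotomy and permit the needed path constructions inside $Q^*$.
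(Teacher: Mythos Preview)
Your reduction in the first two paragraphs is clean and correct: the monomial-matching argument does show that if the chain $I_n$ stabilizes, then there exist a monomial $\mu_0 \in R \setminus \mathfrak{m}$ and $k \geq 1$ with $\mu_0 g^k \in R$, hence $g^k \in R_{\mathfrak{m}}$. (One minor point: when you write ``$x \in \mathfrak{n}$'' for a simple matching variable, you are implicitly lifting $\mathfrak{n}$ to a maximal ideal of $k[\mathcal{S}]$; this is harmless but should be said.)

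The genuine gap is the last step. You have, for every vertex $v$, a cycle $c'_v$ in $Q^*$ with $\bar\tau(c'_v) = \mu_0 g^k$, and you want to conclude that $Q^*$ contains a single closed walk through all vertices. But having a $Q^*$-cycle \emph{at} every vertex does not force $Q^*$ to be strongly connected: the cycles $c'_v$ could sit in distinct strongly connected components of $Q^*$, and then no concatenation is possible. Your appeal to Theorem~\ref{iff} only tells you that all the $c'_v$ lie in the same class in $\mathbb{Z}^N$; it says nothing about the existence of $Q^*$-paths between their supports. The phrase ``$\sigma$-corrections preserve the live/null dichotomy'' does not help either, since $\sigma$ is divisible by every null variable and thus cannot be realized by a path in $Q^*$. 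So as written, the contradiction does not close. (There is also an edge case you skipped: if every simple matching lies in the lift of $\mathfrak{n}$, then $Q^*$ has no arrows at all and your cycle $c$ does not exist.)

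The paper avoids this connectivity problem by working geometrically rather than through $Q^*$. It starts from a cycle $p$ with $\overbar{p} \in R \setminus \mathfrak{n}$, finds a vertex $j$ that no power of $p$ reaches, takes the cycle $q$ at $j$ with $\overbar{q} = \overbar{p}$, and then extracts a cyclic subpath $s$ of $q$ running along a side of the fundamental polygon $P$. The ascending chain is built from $\overbar{s}^n \sigma^{\ell}$, and the strictness $\overbar{s}^n \notin R_{\mathfrak{m}}$ is argued via Theorem~\ref{iff}: any cycle with image $\overbar{s}^n$ must lie in the class $[s^n]$, and such cycles cannot meet $p + \ker\eta$ because $p$ and $q$ are geometrically separated (different classes, $p$ does not pass through $j$). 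In other words, the paper replaces your global connectivity requirement on $Q^*$ by a local separation argument in the polygon~$P$; that is what makes Theorem~\ref{iff} do real work.
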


\begin{proof}
Let $p$ be a cycle for which $\overbar{p}$ is in $R \setminus \mathfrak{n}$.
Then $\overbar{p}^m \not \in \mathfrak{n}$ for each $m \geq 1$.
Thus, by assumption, $p^m + \ker \eta$ does not pass through each vertex of $Q$.
In particular, there is a vertex $j \in Q_0$ such that $e_j$ is not a subpath of $p^m + \ker \eta$ for $m \geq 1$.

Since $\overbar{p}$ is in $R$, there is a cycle $q \in e_j kQ e_j$ such that $\overbar{q} = \overbar{p}$.
By Theorem \ref{iff}, we have $[p] \not = [q]$.
It thus suffices to suppose that $q$ has a cyclic proper subpath $s$ that runs along one of the edges of the fundamental polygon $P$; see Figure \ref{ghm}.i.

Let $\ell \geq 1$ be such that $\sigma^{\ell}_{\operatorname{t}(s)} + \ker \eta$ passes through each vertex of $Q$.
Then $\overbar{s}^n\sigma^{\ell}$ is in $R$ for each $n \geq 1$.
However, there is no cycle that intersects $p$ with monomial $\overbar{s}^n$ for any $n \geq 1$, since such a cycle would necessarily be in the same class as $s^n$, again by Theorem \ref{iff}, and $p + \ker \eta$ and $q +\ker \eta$ do not intersect.
Thus $\overbar{s}^n$ is not in $R_{\mathfrak{n} \cap R}$ for each $n \geq 1$.
But then the ascending chain of ideals of $R_{\mathfrak{n} \cap R}$,
$$\overbar{s}\sigma^{\ell} \subset (\overbar{s}, \overbar{s}^2)\sigma^{\ell} \subset (\overbar{s}, \overbar{s}^2, \overbar{s}^3)\sigma^{\ell} \subset \cdots,$$
does not stabilize.
Therefore $R_{\mathfrak{n} \cap R}$ is nonnoetherian.
\end{proof}

\begin{figure}
$$\begin{array}{ccccc}
\includegraphics[width=4cm]{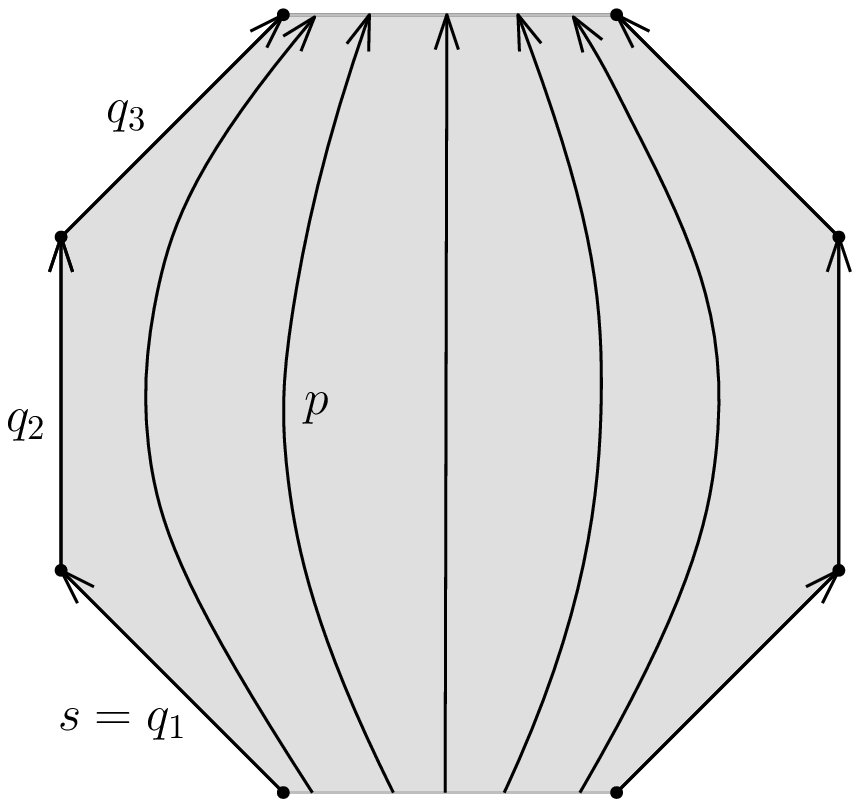} & \ \ \ \ &
\includegraphics[width=4cm]{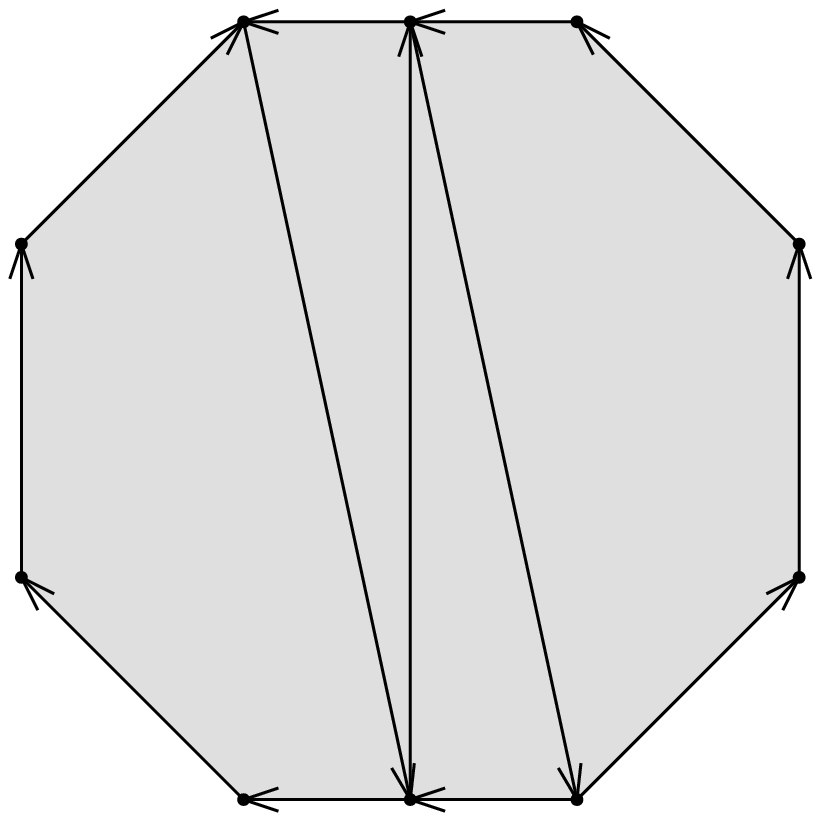} & \ \ \ \ &
\includegraphics[width=4cm]{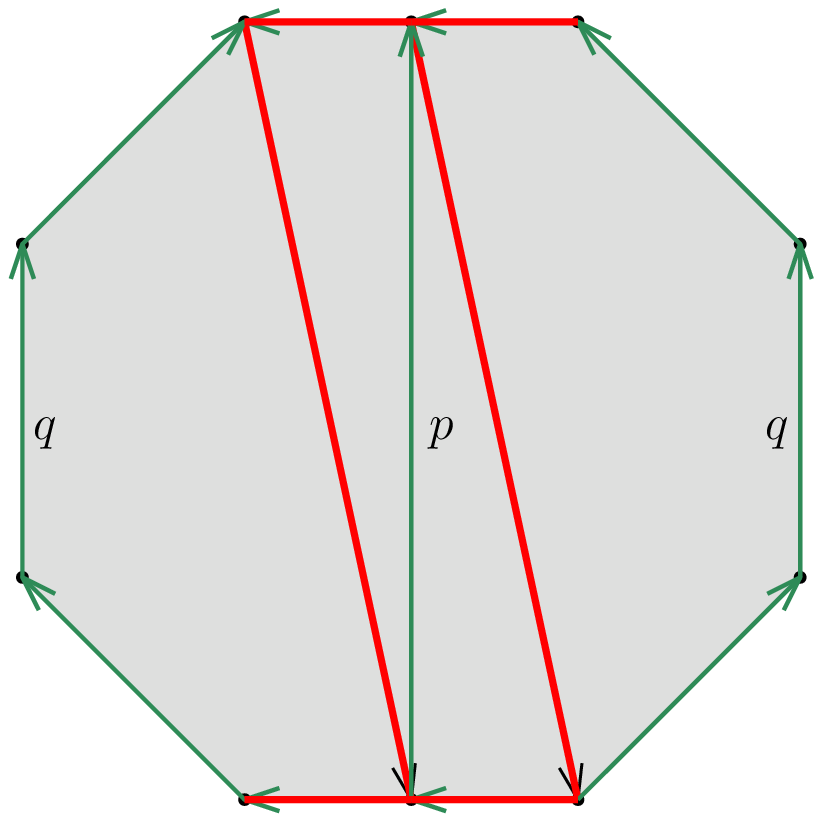}\\
(i) & & (ii) & & (iii)
\end{array}$$
\caption{(i): Setup for Lemma \ref{each vertex} (with irrelevant paths omitted).
(ii): A specific example of (i), described in Example \ref{generalized polynomial}.
(iii): A subdivision consisting of two columns (drawn in green).}
\label{ghm}
\end{figure}

\begin{Example} \label{generalized polynomial} \rm{
In Lemma \ref{each vertex} we considered a ghor algebra that contains a cycle $p$ whose monomial $\overbar{p}$ is in $R$, yet $p^m + \ker \eta$ does not pass through each vertex of $Q$ for all $m \geq 1$.
A priori it is unclear whether such ghor algebras exist; however, an example with this property is given in Figure \ref{ghm}.ii.
This particular example may be regarded as a two-vertex generalization of the polynomial ghor algebra given in Example \ref{fig:polynomial}.
}\end{Example}

\begin{Theorem} \label{depiction theorem}
If the center $R$ of a geodesic ghor algebra $A$ is nonnoetherian, then the cycle algebra $S$ of $A$ is a depiction of $R$.
\end{Theorem}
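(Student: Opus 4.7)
The plan is to verify the three conditions of Definition \ref{depiction definition}: that $S$ is a finitely generated overring of $R$, that $\operatorname{Spec}S \to \operatorname{Spec}R$ is surjective, and that the noetherian locus $U_{S/R}$ is a nonempty set coinciding with $\{\mathfrak{n} \in \operatorname{Max}S : R_{\mathfrak{n}\cap R}\text{ is noetherian}\}$. Finite generation of $S$ is Lemma \ref{finitely generated}, and surjectivity is Lemma \ref{surjective}; what remains is the overring property together with the two $U_{S/R}$ assertions.

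The key device is to build a ``universal denominator'' in $R$. I would first observe that $Q$ admits a cycle $p$ visiting every vertex, since $Q$ is strongly connected (every arrow lies in a unit cycle and $Q$ is connected, so one concatenates paths linking any chosen sequence of vertices). Given such a $p$, cyclically rotating $p$ to each vertex $j \in Q_0$ yields a cycle at $j$ with the same $\bar\tau$-image, so $\overbar{p} \in \bigcap_{j \in Q_0} \bar\tau(e_jAe_j) = R$. More strongly, I claim $\overbar{p}\,\overbar{q} \in R$ for every cycle $q$ at any vertex $i \in Q_0$: given $j \in Q_0$, rotate $p$ to begin at $j$ and factor it as $p = p_3 p_1$ with $p_1\colon j \to i$, $p_3\colon i \to j$; the spliced cycle $p_3 q p_1$ is then a cycle at $j$ with image $\overbar{p_3}\,\overbar{q}\,\overbar{p_1} = \overbar{p}\,\overbar{q}$ by commutativity of $k[\mathcal{S}]$. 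Varying $j$ places $\overbar{p}\,\overbar{q}$ into $\bigcap_j \bar\tau(e_jAe_j) = R$.

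Because $S$ is generated as a $k$-algebra by finitely many $\overbar{s}_j$ (Lemma \ref{finitely generated}) and each $\overbar{p}\,\overbar{s}_j \in R$, one obtains $\overbar{p}^{N}\!\cdot S \subseteq R$ for $N$ large, whence $S \subseteq R[\overbar{p}^{-1}] \subseteq \operatorname{Frac}(R)$; thus $S$ is genuinely an overring of $R$. Moreover, for any $\mathfrak{n} \in \operatorname{Max}S$ with $\overbar{p} \notin \mathfrak{n}$, I would deduce $S_{\mathfrak{n}} = R_{\mathfrak{n}\cap R}$: the inclusion $\supseteq$ is trivial, and for $\subseteq$ one rewrites $a/b \in S_{\mathfrak{n}}$ as $(a\overbar{p}^{N})/(b\overbar{p}^{N})$ with both numerator and denominator in $R$ for $N$ large, and the denominator stays outside $\mathfrak{n}\cap R$ because $b \notin \mathfrak{n}$ and $\overbar{p} \notin \mathfrak{n}$. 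Since $\overbar{p}$ is nonzero in the integral domain $S$, maximal ideals of $S$ avoiding $\overbar{p}$ exist, so $U_{S/R} \neq \emptyset$.

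It then remains to prove the equality in (\ref{U}). The inclusion $U_{S/R} \subseteq \{\mathfrak{n} : R_{\mathfrak{n}\cap R}\text{ noetherian}\}$ is immediate, as $S_{\mathfrak{n}}$ is a localization of the finite type $k$-algebra $S$ and is therefore noetherian. For the reverse inclusion, suppose $R_{\mathfrak{n}\cap R}$ is noetherian; then the contrapositive of Lemma \ref{each vertex} yields a cycle $p$ through every vertex of $Q$ with $\overbar{p} \notin \mathfrak{n}$, and applying the universal-denominator argument of the preceding paragraph to this particular $p$ gives $S_{\mathfrak{n}} = R_{\mathfrak{n}\cap R}$, hence $\mathfrak{n} \in U_{S/R}$. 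The most delicate point in the whole plan is the rotation-and-splice construction producing cycles of common image $\overbar{p}\,\overbar{q}$ at every vertex of $Q$; once this is established the remainder is essentially formal bookkeeping with localizations.
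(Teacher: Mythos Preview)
Your proposal is correct and follows essentially the same route as the paper: both arguments hinge on the observation that a cycle through every vertex of $Q$ serves as a universal denominator, with the paper making the specific choice $\sigma_i^m$ for the nonemptiness of $U_{S/R}$ and then invoking Lemma \ref{each vertex} for the reverse inclusion in (\ref{U}) exactly as you do. One small slip worth fixing: no single $N$ gives $\overbar{p}^{\,N}\cdot S \subseteq R$, since $S$ contains monomials of arbitrarily high total degree in the generators $\overbar{s}_j$; what is true (and all you need) is that each element of $S$ lies in $R[\overbar{p}^{-1}]$, which follows either from $\overbar{p}\,\overbar{s}_j \in R$ and $\overbar{p}\in R$ inductively, or directly from your splice construction applied to an arbitrary cycle $q$ rather than just the generators.
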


\begin{proof}
Suppose that $R$ is nonnoetherian.

(i) We first claim that the locus $U_{S/R}$ is nonempty.

Choose a maximal ideal $\mathfrak{n}$ of $S$ for which $\sigma \not \in \mathfrak{n}$ (that is, for each simple matching $x \in \mathcal{S}$, there is a nonzero constant $c \in k^*$  such that $(x - c)k[\mathcal{S}] \cap S \subset \mathfrak{n}$).
Let $m \geq 1$ be such that $\sigma_i^m + \ker \eta$ passes through each vertex of $Q$.
Let $s$ be any cycle in $Q$.
Then $\overbar{s} \sigma^m = \overbar{s\sigma_{\operatorname{t}(p)}^m} \in R$.
Whence
$$\overbar{s} = \frac{\overbar{s}\sigma^m}{\sigma^m} \in R_{\mathfrak{n} \cap R}.$$
Thus $S \subset R_{\mathfrak{n} \cap R}$, and therefore $S_{\mathfrak{n}} \subseteq R_{\mathfrak{n} \cap R} \subseteq S_{\mathfrak{n}}$.
Consequently, $\mathfrak{n} \in U_{S/R}$.

(ii) Let $\mathfrak{n} \in \operatorname{Max}S$, and suppose the localization $R_{\mathfrak{n} \cap R}$ is noetherian.
We claim that $R_{\mathfrak{n} \cap R} = S_{\mathfrak{n}}$.

Fix a cycle $s$.
Since $R_{\mathfrak{n} \cap R}$ is noetherian, there is a cycle $p$ which passes through each vertex of $Q$ such that $\overbar{p} \not \in \mathfrak{n}$, by Lemma \ref{each vertex}.
Since $p$ passes through each vertex of $Q$, the monomials $\overbar{p}$ and $\overbar{s} \overbar{p}$ are both in $R$.
Thus
$$\overbar{s} = \frac{\overbar{s} \overbar{p}}{\overbar{p}} \in R_{\mathfrak{n} \cap R}.$$
Whence $S \subset R_{\mathfrak{n} \cap R}$.
Therefore $R_{\mathfrak{n} \cap R} = S_{\mathfrak{n}}$.

(iii) Finally, the morphism $\operatorname{Spec}S \to \operatorname{Spec}R$ is surjective by Lemma \ref{surjective}.
\end{proof}

\subsection{The Krull dimension of the center} \label{Krull section}

Suppose $A$ is geodesic.
Denote by $T$ the subalgebra of $S$ generated by $\sigma$ and the $\bar{\tau}$-images of the $2N$ geodesic cycles $\gamma_1, \ldots, \gamma_{2N}$ in Definition \ref{geocom}; without loss of generality we may assume
\begin{equation} \label{complementary sigma}
\gamma_k \gamma_{k+N} \stackrel{\sigma}{=} 1,
\end{equation}
by Lemma \ref{Z}.i.

\begin{Lemma} \label{birational}
The following inclusions hold:
\begin{itemize}
 \item[(i)] $S \subset T[\sigma^{-1}]$.
 \item[(ii)] $S \subset R[\sigma^{-1}]$.
 \item[(iii)] $R[\sigma^{-1}] = T[\sigma^{-1}] = S[\sigma^{-1}]$.
\end{itemize}
In particular, the algebraic varieties $\operatorname{Max}R$, $\operatorname{Max}T$, and $\operatorname{Max}S$ are birationally equivalent.
\end{Lemma}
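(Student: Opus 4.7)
The plan is to establish the three inclusions separately, then derive the birational equivalence formally. Since $R \subseteq T \subseteq S$ already holds, the content is $S \subseteq T[\sigma^{-1}]$ and $S \subseteq R[\sigma^{-1}]$; everything else is bookkeeping.

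For (i), I would reduce to showing $\bar s \in T[\sigma^{-1}]$ for every cycle $s$ in $Q$, since such elements generate $S$ as a $k$-algebra. If $s$ is contractible then $\bar s \stackrel{\sigma}{=} 1$ by Lemma \ref{Z}.i, so assume otherwise. I would then reuse the decomposition built in the proof of Proposition \ref{how}: after a cyclic permutation of $s$ (which leaves $\bar s$ unchanged in $k[\mathcal{S}]$) I may assume the leftmost arrow of $s$ crosses $\pi(\partial P)$, factor $s = a_m p_m \cdots a_1 p_1$ with $a_k$ boundary-crossing and $p_k$ interior, and form the auxiliary cycles $r_k = s_{k-1} a_k t_k$ at a common vertex as in (\ref{r_k}). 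The calculation (\ref{k=1}) then gives $\bar s \stackrel{\sigma}{=} \prod_k \bar r_k$. Each $r_k$ transversely crosses exactly one side $j(k)$ of $P$, so $[r_k] = [\gamma_{j(k)}]$, whence $\bar r_k \stackrel{\sigma}{=} \bar\gamma_{j(k)}$ by Theorem \ref{iff}. Therefore $\bar s \stackrel{\sigma}{=} \prod_k \bar\gamma_{j(k)} \in T[\sigma^{-1}]$.

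For (ii), I would recycle the device from the proof of Theorem \ref{depiction theorem}: choose $m \geq 1$ so that $\sigma_i^m + \ker\eta$ admits a representative passing through every vertex of $Q$. Given a generator $\bar s$ of $S$ with $s$ a cycle at $v = \operatorname{t}(s)$, form the cycle $s\,\sigma_v^m$, whose $\bar\tau$-image is $\bar s\,\sigma^m$. Since $\sigma_v^m$ passes through every vertex $j \in Q_0$, the cycle $s\,\sigma_v^m$ admits a cyclic permutation based at $j$, and this permutation has the same $\bar\tau$-image because monomials in $k[\mathcal{S}]$ commute. Thus $\bar s\,\sigma^m$ lies in $\bar\tau(e_jAe_j)$ for every $j$, so $\bar s\,\sigma^m \in R$ and $\bar s \in R[\sigma^{-1}]$.

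Part (iii) is then formal: from (i) together with $T \subseteq S$ one gets $T[\sigma^{-1}] = S[\sigma^{-1}]$, and from (ii) together with $R \subseteq S$ one gets $R[\sigma^{-1}] = S[\sigma^{-1}]$. Since $\sigma$ is nonzero in each of the three integral domains $R$, $T$, $S$, inverting it preserves the fraction field, so all three share a common field of fractions and the affine varieties $\operatorname{Max}R$, $\operatorname{Max}T$, $\operatorname{Max}S$ are birationally equivalent. The main obstacle is carrying out (i) cleanly: one must ensure that the decomposition buried in the proof of Proposition \ref{how} is extracted as a standalone identity applicable to any single cycle $s$, not merely to matched pairs $p,q$, so that each $r_k$ produced is a genuine cycle whose class is $[\gamma_{j(k)}]$, making Theorem \ref{iff} applicable. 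Once this is checked, the rest of the argument is essentially combinatorial.
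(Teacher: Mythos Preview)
Your argument is correct, and parts (ii) and (iii) match the paper's proof almost exactly. One small slip: you assert $R \subseteq T$ in your opening line, but this is neither obvious nor needed anywhere---in (iii) you only use $T \subseteq S$ and $R \subseteq S$, both of which are clear---so simply drop the claim $R \subseteq T$.

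For part (i) you take a genuinely different route from the paper. The paper argues directly from Lemma~\ref{Z}: given a cycle $s$, it chooses interior paths $p^+$, $q^+$ to a fixed vertex $i$ so that $\overbar{pq} = \sigma^{\ell}$, then builds a path $t^+$ from concatenated lifts of the $\gamma_k$ with the same endpoints in $Q^+$ as $(psq)^+$; Lemma~\ref{Z}.ii gives $\bar t \stackrel{\sigma}{=} \overbar{psq}$, whence $\bar s = \bar t\,\sigma^{-\ell - m} \in T[\sigma^{-1}]$. This uses only Lemma~\ref{Z} and avoids any appeal to Theorem~\ref{iff} or to the side-by-side decomposition of Proposition~\ref{how}. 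Your approach instead extracts the factorization $\bar s \stackrel{\sigma}{=} \prod_k \bar r_k$ from the proof of Proposition~\ref{how} and then invokes Theorem~\ref{iff} (really just Proposition~\ref{how} itself, since you only need the forward direction) to replace each $\bar r_k$ by $\bar\gamma_{j(k)}$. Both are valid; the paper's version is shorter and self-contained at the level of Lemma~\ref{Z}, while yours makes the dependence on the class $[s]$ more explicit and reuses machinery already built.
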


\begin{proof}
(i) We first claim that $S \subset T[\sigma^{-1}]$.
Let $s$ be a cycle.
Let $p^+$ be a path lying in the fundamental polygon $P$ from $\operatorname{t}(s^+)$ to a vertex $i_0^+ \in \pi^{-1}(i)$, and let $q^+$ be a path lying in $P$ from a vertex $i_1^+ \in \pi^{-1}(i)$ to $\operatorname{t}(s^+)$.
Then there is an $\ell \geq 0$ such that $\overbar{pq} = \sigma^{\ell}$, by Lemma \ref{Z}.i.

Construct a path $t^+$ in $Q^+$ by concatenating lifts of cycles in $T$ from $i_0^+$ to $i_1^+$.
Then
$$\operatorname{t}(t^+) = \operatorname{t}((psq)^+) \ \ \ \text{ and } \ \ \ \operatorname{h}(t^+) = \operatorname{h}((psq)^+).$$
Thus there is an $m \in \mathbb{Z}$ such that $\bar{t} = \overbar{psq}\sigma^m$, by Lemma \ref{Z}.ii.
But then
$$\overbar{s} = \sigma^{-\ell}\overbar{psq} = \bar{t}\sigma^{-\ell -m} \in T[\sigma^{-1}].$$

(ii) We now claim that $S \subset R[\sigma^{-1}]$.
Let $s$ be a cycle.
Let $p^+$ be a cycle in $Q^+$ such that $p = \pi(p^+)$ passes through each vertex of $Q$.
By Lemma \ref{Z}.i, there is an $\ell \geq 1$ such that $\bar{p} = \sigma^{\ell}$.
Therefore
$$\bar{s} = \sigma^{-\ell} \overbar{ps} \in R[\sigma^{-1}].$$

(iii) We have
$$R[\sigma^{-1}] = S[\sigma^{-1}] \stackrel{\textsc{(i)}}{\subseteq} T[\sigma^{-1}] \subseteq S[\sigma^{-1}] \stackrel{\textsc{(ii)}}{\subseteq} R[\sigma^{-1}],$$
where (\textsc{i}) holds by Claim (i), and (\textsc{ii}) holds by Claim (ii).
\end{proof}

\begin{Lemma} \label{S'1}
The Krull dimension of $T$ is $N+1$.
\end{Lemma}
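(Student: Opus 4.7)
The plan is to invert $\sigma$, identify $T[\sigma^{-1}]$ as a Laurent polynomial ring in $N+1$ variables, and use the fact that Krull dimension is preserved under localization for finitely generated $k$-algebra domains. Since $T$ is a finitely generated $k$-subalgebra of the polynomial ring $k[\mathcal{S}]$, it is a finitely generated $k$-algebra domain with $\dim T = \operatorname{tr.deg}_k \operatorname{Frac}(T) = \dim T[\sigma^{-1}]$, so it suffices to compute $\dim T[\sigma^{-1}]$.

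First I would use the relations $\gamma_k \gamma_{k+N} \stackrel{\sigma}{=} 1$ to eliminate the generators $\bar{\gamma}_{N+1}, \ldots, \bar{\gamma}_{2N}$ after localization. These relations mean $\bar{\gamma}_k \bar{\gamma}_{k+N} = \sigma^{n_k}$ for some $n_k \in \mathbb{Z}$; here $n_k \geq 1$, since $\bar{\gamma}_k \bar{\gamma}_{k+N}$ lies in $k[\mathcal{S}]$ (forcing $n_k \geq 0$) while each $\bar{\gamma}_k \neq 1$ (as $\gamma_k$ has arrows, which must lie in some simple matching by Proposition \ref{arrow simple}). Hence $\bar{\gamma}_{k+N} = \sigma^{n_k} \bar{\gamma}_k^{-1} \in T[\sigma^{-1}]$ and
\[
T[\sigma^{-1}] = k[\bar{\gamma}_1, \ldots, \bar{\gamma}_N, \sigma, \sigma^{-1}],
\]
giving the upper bound $\dim T[\sigma^{-1}] \leq N+1$.

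The crux is the matching lower bound, which reduces to showing that $\bar{\gamma}_1, \ldots, \bar{\gamma}_N, \sigma$ are algebraically independent over $k$. Equivalently, I must show that no two distinct tuples $(a_0, \ldots, a_N), (b_0, \ldots, b_N) \in \mathbb{Z}_{\geq 0}^{N+1}$ yield the same monomial in $k[\mathcal{S}]$. Subtracting and splitting into positive and negative parts, any such equality can be rewritten as
\[
\prod_{a_k > 0} \bar{\gamma}_k^{a_k} \cdot \sigma^{b^+} \;=\; \prod_{a_k < 0} \bar{\gamma}_k^{-a_k} \cdot \sigma^{b^-}
\]
in $k[\mathcal{S}]$. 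My strategy is to realize each side, up to a factor of $\sigma^m$, as the $\bar{\tau}$-image of an explicit cycle $C$, $C'$ at a common basepoint $i_0 \in Q_0$: each is built by concatenating iterates of the $\gamma_k$ with connecting paths chosen so that their lifts in $Q^+$ remain inside a single fundamental polygon containing the lift of $i_0$. By Lemma \ref{Z}.i the connecting loops are contractible, contributing neither to the class nor (modulo $\sigma$) to the $\bar{\tau}$-image. The monomial identity then yields $\bar{\tau}(C) \stackrel{\sigma}{=} \bar{\tau}(C')$, whence Theorem \ref{iff} forces $[C] = [C']$ in $\mathbb{Z}^N$. Since $[\gamma_k] = e_k$ for $k \in [N]$ and the $e_k$ are linearly independent, all $a_k$ must vanish, and then $\sigma^{b^+} = \sigma^{b^-}$ forces $b^+ = b^-$ (as $\sigma$ has positive total degree in $k[\mathcal{S}]$), contradicting our choice of distinct tuples.

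The main obstacle is realizing the formal monomial products as $\bar{\tau}$-images of genuine cycles in $Q$, since the geodesic cycles $\gamma_k$ typically live at different basepoints. Working in the cover $Q^+$ and choosing connecting paths whose lifts stay inside a single fundamental polygon handles both the class and the $\bar{\tau}$-image control simultaneously, reducing the question to the already established Theorem \ref{iff}.
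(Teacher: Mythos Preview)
Your argument is correct and takes a genuinely different route from the paper's. The paper exhibits an explicit chain of prime ideals
\[
0 \subsetneq (\sigma,\bar\gamma_1,\ldots,\bar\gamma_N)T \subsetneq (\sigma,\bar\gamma_1,\ldots,\bar\gamma_{N+1})T \subsetneq \cdots \subsetneq (\sigma,\bar\gamma_1,\ldots,\bar\gamma_{2N})T
\]
of length $N+1$, checking primality and strictness of each inclusion directly via Theorem~\ref{iff}, and then arguing that the chain is maximal. You instead pass to the fraction field: the relations $\bar\gamma_k\bar\gamma_{k+N}=\sigma^{n_k}$ show $\operatorname{Frac}(T)=k(\bar\gamma_1,\ldots,\bar\gamma_N,\sigma)$, giving the upper bound, and for the lower bound you realise any putative monomial relation among $\bar\gamma_1,\ldots,\bar\gamma_N,\sigma$ as an equality $\bar\tau(C)\stackrel{\sigma}{=}\bar\tau(C')$ between explicit cycles at a common basepoint (built from the $\gamma_k$ via connecting paths that do not cross $\partial P$, exactly as in the proof of Proposition~\ref{how}), so that Theorem~\ref{iff} forces $[C]=[C']$ and hence triviality of the relation. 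Both proofs pivot on Theorem~\ref{iff}; yours is the standard toric/lattice argument via linear independence of exponent vectors, and it avoids the somewhat terse maximality step (iii) in the paper's proof.

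One small correction: your displayed equality $T[\sigma^{-1}] = k[\bar\gamma_1,\ldots,\bar\gamma_N,\sigma,\sigma^{-1}]$ is not literally right, since $\bar\gamma_{k+N}=\sigma^{n_k}\bar\gamma_k^{-1}$ forces $\bar\gamma_k^{-1}\in T[\sigma^{-1}]$; the correct identification is with the Laurent ring $k[\bar\gamma_1^{\pm1},\ldots,\bar\gamma_N^{\pm1},\sigma^{\pm1}]$ (and indeed $\bar\gamma_k^{-1}=\sigma^{-n_k}\bar\gamma_{k+N}\in T[\sigma^{-1}]$). This does not affect your argument, which only needs that $\operatorname{Frac}(T)$ is generated over $k$ by $N+1$ elements.
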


\begin{proof}
For $j \in [N]$, set
$$\mathfrak{p}_j := (\sigma, \overbar{\gamma}_1, \ldots, \overbar{\gamma}_N, \overbar{\gamma}_{N+1}, \ldots, \overbar{\gamma}_{j+N})T,$$
and consider the chain of ideals of $T$,
\begin{equation} \label{chain}
0 \subseteq \mathfrak{p}_0 \subseteq \mathfrak{p}_1 \subseteq \cdots \subseteq \mathfrak{p}_N.
\end{equation}

(i) We first claim that each $\mathfrak{p}_j$ is prime.

Indeed, let $p,q$ be cycles for which $\overbar{p}, \overbar{q} \in T$, and suppose $\overbar{p} \overbar{q} \in \mathfrak{p}_j$.
Then there is a $k \in [j+N]$ and a monomial $\overbar{t} \in T$ such that
$$\overbar{p} \overbar{q} = \overbar{t} \overbar{\gamma}_k.$$
By Theorem \ref{iff}, the cycles $pq$ and $t\gamma_k$ are in the same class.
But then $\gamma_k$ is a factor of $p$ or a factor $q$, whence $p \in \mathfrak{p}_j$ or $q \in \mathfrak{p}_j$.

(ii) We now claim that the inclusions in (\ref{chain}) are strict.

Fix $j \in [N-1]$ and $k \in [j+1, N]$.
Assume to the contrary that $\overbar{\gamma}_{k+N} \in \mathfrak{p}_j$.
By assumption, $\sigma \nmid \overbar{\gamma}_{k+N}$.
Whence $\overbar{\gamma}_{k+N} \not \in \sigma T$.
Thus, since $T$ is toric, there is some $i \in [j+N]$ and a monomial $\overbar{t} \in T$ such that $\overbar{\gamma}_{k+N} = \overbar{t} \overbar{\gamma}_i$.
But then by Theorem \ref{iff}, $\gamma_{k+N}$ and $t \gamma_i$ are in the same class, a contradiction.
Therefore the chain (\ref{chain}) is strict.

(iii) Finally, the chain (\ref{chain}) is maximal: Let $\mathfrak{p}$ be a prime ideal of $T$ generated by monomials, and let $k \in [N]$.
Then $\sigma$ is in $\mathfrak{p}$ if and only if $\overbar{\gamma}_k$ or $\overbar{\gamma}_{k+N}$ is in $\mathfrak{p}$ by (\ref{complementary sigma}).
\end{proof}

\begin{Proposition} \label{dimensions}
The Krull dimensions of $R$, $S$, and $T$ are equal.
\end{Proposition}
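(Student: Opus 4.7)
The plan is to prove the chain of equalities $\dim R = \dim S = \dim T = N+1$ by combining three ingredients: the explicit computation $\dim T = N+1$ from Lemma \ref{S'1}, the birational identifications $R[\sigma^{-1}] = T[\sigma^{-1}] = S[\sigma^{-1}]$ from Lemma \ref{birational}, and the classical fact that for a finitely generated $k$-algebra domain $D$ one has $\dim D = \operatorname{trdeg}_k \operatorname{Frac}(D)$.

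First I would show $\dim S = \dim T$. The ring $T$ is finitely generated over $k$ by $\sigma$ and the $2N$ cycle images $\bar\gamma_k$, and $S$ is finitely generated by Lemma \ref{finitely generated}. Both are domains (as subrings of the polynomial ring $k[\mathcal{S}]$), and by Lemma \ref{birational} they share the same field of fractions. Hence $\dim S = \operatorname{trdeg}_k \operatorname{Frac}(S) = \operatorname{trdeg}_k \operatorname{Frac}(T) = \dim T = N+1$.

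Next I would show $\dim R \geq \dim S$. Since $\sigma \in R$ is nonzero, the open immersion $\operatorname{Spec}R[\sigma^{-1}] \hookrightarrow \operatorname{Spec}R$ gives $\dim R \geq \dim R[\sigma^{-1}]$. By Lemma \ref{birational}, $R[\sigma^{-1}] = S[\sigma^{-1}]$, and localization of a finitely generated $k$-algebra domain at a nonzero element does not change the Krull dimension (both equal the transcendence degree of the common fraction field), so $\dim R[\sigma^{-1}] = \dim S$.

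The reverse inequality $\dim R \leq \dim S$ is the main obstacle, because $R$ is in general neither noetherian nor finitely generated as a $k$-algebra, so the transcendence degree formula cannot be invoked for $R$ directly. My plan is to reduce to the finitely generated case. Given any chain of primes $\mathfrak{p}_0 \subsetneq \cdots \subsetneq \mathfrak{p}_n$ in $R$, I would pick separators $a_i \in \mathfrak{p}_i \setminus \mathfrak{p}_{i-1}$ for each $i \in [n]$, and form the finitely generated subalgebra $R' := k[a_1, \ldots, a_n] \subseteq R$. The contracted chain $\mathfrak{p}_0 \cap R' \subsetneq \cdots \subsetneq \mathfrak{p}_n \cap R'$ is then strict in $R'$, since $a_i \in (\mathfrak{p}_i \cap R') \setminus (\mathfrak{p}_{i-1} \cap R')$, so $\dim R' \geq n$. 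But $R'$ is a finitely generated $k$-algebra domain with $\operatorname{Frac}(R') \subseteq \operatorname{Frac}(R) = \operatorname{Frac}(S)$, yielding $\dim R' = \operatorname{trdeg}_k \operatorname{Frac}(R') \leq \operatorname{trdeg}_k \operatorname{Frac}(S) = \dim S$. Combining, $n \leq \dim S$, so $\dim R \leq \dim S$ and all three dimensions coincide.
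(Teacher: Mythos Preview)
Your argument is correct, and it takes a genuinely different route from the paper's. The paper obtains $\dim R = \dim S$ by invoking the depiction machinery: Theorem \ref{depiction theorem} says $S$ depicts $R$ (when $R$ is nonnoetherian), and a result from \cite{B3} then yields $\dim R = \dim S$; the remaining equality $\dim S = \dim T$ is handled exactly as you do, via the common fraction field and transcendence degree. You instead bound $\dim R$ from both sides directly: from below via the open subscheme $\operatorname{Spec}R[\sigma^{-1}] = \operatorname{Spec}S[\sigma^{-1}]$, and from above by the elementary trick of descending any finite prime chain in $R$ to a finitely generated sub-$k$-algebra $R' \subseteq R$, whose dimension is at most $\operatorname{trdeg}_k \operatorname{Frac}(S)$. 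Your approach is more self-contained (no appeal to \cite{B3}) and treats the noetherian and nonnoetherian cases for $R$ uniformly, whereas the paper's citation of Theorem \ref{depiction theorem} literally addresses only the nonnoetherian case. The paper's route, in turn, illustrates how the depiction framework packages this kind of dimension comparison.
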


\begin{proof}
If $B$ is a nonnoetherian integral domain depicted by $C$, then $\operatorname{dim}B = \operatorname{dim}C$, by \cite[]{B3}.
Furthermore, if an integral domain is finitely generated over $k$, then its Krull dimension and transcendence degree are equal. % (e.g., \cite[p.\ 34]{M}).
Therefore
\begin{multline*}
\operatorname{dim}R \stackrel{\textsc{(i)}}{=} \operatorname{dim}S \stackrel{\textsc{(ii)}}{=} \operatorname{trdeg}_k (\operatorname{Frac}S)
= \operatorname{trdeg}_k (\operatorname{Frac}S[\sigma^{-1}])\\
\stackrel{\textsc{(iii)}}{=} \operatorname{trdeg}_k (\operatorname{Frac}T[\sigma^{-1}])
= \operatorname{trdeg}_k (\operatorname{Frac}T)
 \stackrel{\textsc{(iv)}}{=} \operatorname{dim}T,
\end{multline*}
where (\textsc{i}) holds since $S$ is a depiction of $R$ by Theorem \ref{depiction theorem}; (\textsc{iii}) holds by Lemma \ref{birational}; and (\textsc{ii}) and (\textsc{iv}) hold since $S[\sigma^{-1}]$ and $T[\sigma^{-1}]$ are finitely generated over $k$ by Lemma \ref{finitely generated}.
\end{proof}

\begin{Theorem} \label{KZA}
The Krull dimensions of $R$ and $S$ satisfy
\begin{equation} \label{dimR}
\operatorname{dim}R = \operatorname{dim}S = N + 1.
\end{equation}
In particular, if $\Sigma$ is a smooth genus $g \geq 1$ surface, then
$$\operatorname{dim}R = \operatorname{rank}H_1(\Sigma) + 1 = 2g + 1.$$
\end{Theorem}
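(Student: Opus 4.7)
The proof is essentially a direct assembly of results already established in the section: Proposition \ref{dimensions} provides the equality $\dim R = \dim S = \dim T$, while Lemma \ref{S'1} explicitly computes $\dim T = N+1$. So my plan is to first quote these two statements to conclude the first equation \eqref{dimR}, and then translate this into the homological statement in the smooth case by a direct counting argument on the fundamental polygon.

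For the first assertion, I would simply chain the two identities:
\begin{equation*}
\operatorname{dim} R \;=\; \operatorname{dim} S \;=\; \operatorname{dim} T \;=\; N+1,
\end{equation*}
where the first two equalities are Proposition \ref{dimensions} and the last is Lemma \ref{S'1}. Nothing further is required for this part.

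For the second assertion, I would invoke the classification of surfaces obtained from a $2N$-gon with opposite sides identified that was recorded at the start of Section \ref{results}: if $\Sigma$ is a smooth closed orientable surface of genus $g \geq 1$, then the fundamental polygon $P$ is a $4g$-gon, i.e.\ $2N = 4g$, so $N = 2g$. Since $H_1(\Sigma,\mathbb{Z}) \cong \mathbb{Z}^{2g}$ has rank $2g$, we obtain
\begin{equation*}
\operatorname{dim} R \;=\; N+1 \;=\; 2g+1 \;=\; \operatorname{rank} H_1(\Sigma) + 1,
\end{equation*}
as claimed.

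The main conceptual obstacle has already been dispatched upstream in Lemma \ref{S'1}, where the maximal chain of prime ideals of $T$ was constructed using the generators $\overbar{\gamma}_k$ and their classes (via Theorem \ref{iff}), together with Proposition \ref{dimensions}'s use of depictions (Theorem \ref{depiction theorem}) and the birational identification in Lemma \ref{birational}. Thus Theorem \ref{KZA} itself is a short corollary and should not require any additional work beyond carefully citing these statements.
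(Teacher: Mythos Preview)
Your proposal is correct and matches the paper's own proof essentially verbatim: the paper also cites Proposition \ref{dimensions} for $\dim R = \dim S = \dim T$ and Lemma \ref{S'1} for $\dim T = N+1$, leaving the genus computation implicit from the setup in Section \ref{results}. Your additional sentence unpacking $2N = 4g$ and $\operatorname{rank} H_1(\Sigma) = 2g$ is a welcome clarification but not a departure from the paper's argument.
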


\begin{proof}
We have $\operatorname{dim}R = \operatorname{dim}S = \operatorname{dim}T$ by Proposition \ref{dimensions}.
The equalities (\ref{dimR}) therefore hold by Lemma \ref{S'1}.
\end{proof}

\subsection{Endomorphism ring structure} \label{endo section}

Let $A$ be a ghor algebra with center $R$ and cycle algebra $S$.
In the following, set $\overbar{p} := \bar{\tau}(p)$ for $p \in e_jkQe_i$.

\begin{Lemma} \label{endo lemma}
Suppose $A$ is geodesic.
Fix $i,j \in Q_0$, and let $f \in \operatorname{End}_R(Ae_i)$.
Then there is some $g \in k[\mathcal{S}]$ such that $\overbar{f(p)} = g\overbar{p}$ for all $p \in e_jAe_i$.
\end{Lemma}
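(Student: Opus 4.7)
The plan is to leverage the injection $\tau\colon A \hookrightarrow M_{|Q_0|}(k[\mathcal{S}])$ of Corollary~\ref{injective}, under which the center $R$ sits as scalar matrices, in order to convert the $R$-linear endomorphism $f$ into an $R$-linear map on the submodule $M_j := \bar{\tau}(e_jAe_i) \subset k[\mathcal{S}]$, and then show this map is multiplication by a single element $g$. First, using injectivity of $\tau$ on $e_jAe_i$, I would define a map $\tilde{f}\colon M_j \to k[\mathcal{S}]$ by sending $\overbar{p}$ to the $(j,i)$-entry of the matrix $\tau(f(p))$; the $R$-linearity of $f$, combined with the scalar-matrix form of $\tau(R)$, makes $\tilde{f}$ into an $R$-linear map where $R$ acts by multiplication on $k[\mathcal{S}]$.

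Next I would produce the candidate $g$ by choosing any $p_0 \in e_jAe_i$ with $\overbar{p_0} \neq 0$ and setting $g := \tilde{f}(\overbar{p_0})/\overbar{p_0} \in \operatorname{Frac}(k[\mathcal{S}])$, and then verifying $\tilde{f}(\overbar{q}) = g\,\overbar{q}$ for every $q \in e_jAe_i$. The key input is that for any path $r \in e_iAe_j$, the products $rp_0$ and $rq$ are cycles in $e_iAe_i$, so $\overbar{rp_0}$ and $\overbar{rq}$ lie in the cycle algebra $S$, and consequently
$$\frac{\overbar{q}}{\overbar{p_0}} \,=\, \frac{\overbar{rq}}{\overbar{rp_0}} \,\in\, \operatorname{Frac}(S) \,=\, \operatorname{Frac}(R),$$
where the last equality is Lemma~\ref{birational}. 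This provides nonzero $z_1, z_2 \in R$ with $z_1 \overbar{p_0} = z_2 \overbar{q}$; applying $\tilde{f}$, invoking its $R$-linearity, substituting $\tilde{f}(\overbar{p_0}) = g\,\overbar{p_0}$, and canceling $z_2$ inside the integral domain $k[\mathcal{S}]$ yields the required identity and simultaneously shows $g$ is independent of the choice of $p_0$.

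The hard part will be upgrading $g$ from an element of $k(\mathcal{S})$ to an element of $k[\mathcal{S}]$. Writing $g = a/b$ in lowest terms in $k[\mathcal{S}]$, the property that $g\,\overbar{q} = \tilde{f}(\overbar{q}) \in k[\mathcal{S}]$ for every $q \in e_jAe_i$ forces $b$ to divide $\overbar{q}$ for every such $q$. The remaining task is to exhibit a finite family of paths in $e_jAe_i$ whose $\bar{\tau}$-images share no nontrivial common factor in $k[\mathcal{S}]$, which will force $b \in k^{\ast}$ and hence $g \in k[\mathcal{S}]$. I expect this last step to rely on Proposition~\ref{arrow simple}, which ensures that each arrow belongs to a simple matching, together with the abundance of distinct paths from $i$ to $j$ available in a geodesic ghor algebra, so that the simple matching variables appearing in the various $\bar{\tau}$-images collectively leave no room for a nontrivial common divisor.
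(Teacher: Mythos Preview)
Your approach is essentially the paper's: both first show the ratio $\overbar{f(p)}/\overbar{p}$ is independent of $p$ using $R$-linearity, and then use simple matchings to show this ratio lies in $k[\mathcal{S}]$ rather than just its fraction field. Where you invoke Lemma~\ref{birational} to produce $z_1,z_2 \in R$ with $z_1\overbar{p_0}=z_2\overbar{q}$, the paper exhibits such central elements directly as $\overbar{qr}\sigma^{\ell}$ and $\overbar{pr}\sigma^{\ell}$ for a return path $r$ and $\ell \gg 0$; these are equivalent maneuvers.

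One correction for your final step: the relevant input is not Proposition~\ref{arrow simple} but the \emph{definition} of a simple matching. For each $x \in \mathcal{S}$ the subquiver $Q \setminus x$ contains a cycle through every vertex, hence is strongly connected, so there is a path $q_x \in e_jkQe_i$ with $x \nmid \overbar{q_x}$; this is exactly what forces your common divisor $b$ to be a unit. The paper uses the same fact, phrased dually: for each $x$ with $x^m \mid \overbar{p}$, the path $q$ with $x \nmid \overbar{q}$ together with the ratio identity gives $x^m \mid \overbar{f(p)}$, whence $\overbar{p} \mid \overbar{f(p)}$. Proposition~\ref{arrow simple} plays a different role in the paper's argument---it guarantees $\overbar{p}$ is a nontrivial monomial for nontrivial $p$---which your gcd formulation does not actually require.
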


\begin{proof}
Fix $i,j \in Q_0$, and let $f \in \operatorname{End}_R(Ae_i)$.
Consider a nontrivial path $p \in e_jkQe_i$.
Since $A$ is geodesic, there is a simple matching $x \in \mathcal{S}$ such that $x \mid \overbar{p}$, by Proposition \ref{arrow simple}.
Let $m \geq 1$ be such that $x^m \mid \overbar{p}$ and $x^{m+1} \nmid \overbar{p}$.
Since $x$ is simple, there is a path $q \in e_jkQe_i$ for which $x \nmid \overbar{q}$.

Let $r^+$ be a path in $Q^+$ from $\operatorname{h}(p^+)$ to $\operatorname{t}(p^+)$.
Let $\ell \geq 0$ be sufficiently large so that $\overbar{pr}\sigma^{\ell}$ and $\overbar{qr}\sigma^{\ell}$ are both in $R$.
Then, since $f$ is an $R$-module homomorphism, we have
$$(p \sigma_i^{\ell} r)f(q) = f(p\sigma_i^{\ell} rq) = f(q\sigma_i^{\ell}rp) = (q\sigma_i^{\ell}r)f(p).$$
Thus, since $k[\mathcal{S}]$ is an integral domain,
\begin{equation} \label{fz}
\frac{\overbar{f(q)}}{\overbar{q}} = \frac{\overbar{f(p)}}{\overbar{p}}.
\end{equation}
But $x^m \mid \overbar{p}$ and $x \nmid \overbar{q}$, and so (\ref{fz}) implies $x^m \mid \overbar{f(p)}$.
Therefore, since $x \in \mathcal{S}$ was an arbitrary simple matching for which $x \mid \overbar{p}$, we have
$$\overbar{p} \mid \overbar{f(p)}.$$

Set
\begin{equation} \label{g :=}
g := \frac{\overbar{f(p)}}{\overbar{p}} \in k[\mathcal{S}].
\end{equation}
Then $\overbar{f(p)} = g\overbar{p}$ for all $p \in e_jAe_i$, again by (\ref{fz}).
\end{proof}

\begin{Proposition}
Suppose $A$ is geodesic.
Then the topological ring
$$A_{\sigma} := A \otimes_R R[\sigma^{-1}]$$
is an endomorphism ring over its center: for each $i \in Q_0$, we have
$$A_{\sigma} \cong \operatorname{End}_{R_{\sigma}}(A_{\sigma} e_i).$$
\end{Proposition}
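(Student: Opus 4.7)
The plan is to show that the natural left-multiplication map $\psi \colon A_\sigma \to \operatorname{End}_{R_\sigma}(A_\sigma e_i)$, $a \mapsto L_a$ with $L_a(m) = am$, is a bijection; $\psi$ is a well-defined $k$-algebra homomorphism because $R_\sigma$ is central in $A_\sigma$. The approach is to produce, for each $k \in Q_0$, an explicit matrix-unit-like pair $p_k \in e_kA_\sigma e_i$ and $\tilde y_k \in e_iA_\sigma e_k$ satisfying $\tilde y_k p_k = e_i$ and $p_k \tilde y_k = e_k$.

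The key structural identification is $e_iA_\sigma e_i \cong R_\sigma$ as $k$-algebras, realized by $\bar{\tau}$: the map $R_\sigma \to e_iA_\sigma e_i$, $z \mapsto ze_i$, is a ring monomorphism, while $\bar{\tau}(e_iA_\sigma e_i) \subseteq S[\sigma^{-1}] = R[\sigma^{-1}] = R_\sigma$ by Lemma \ref{birational}, and the two maps are mutually inverse. To build the pair $(p_k, \tilde y_k)$, fix any path $p_k \in e_kAe_i$ and exploit the geodesic hypothesis (just as in the proof of Lemma \ref{birational}.i, where concatenations of lifts of cycles in $T$ realize arbitrary displacements in the cover) to produce $y_k \in e_iAe_k$ whose lift $y_k^+$ closes the loop started by $p_k^+$ in $Q^+$. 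Then $(y_kp_k)^+$ is a closed loop, so $y_kp_k$ is contractible; thus $\bar{\tau}(y_kp_k) = \sigma^{\ell_k}$ for some $\ell_k \geq 0$, and Corollary \ref{injective} forces $y_kp_k = \sigma_i^{\ell_k}$ in $A$; similarly $p_ky_k = \sigma_k^{\ell_k}$. Since $\sigma \in R$ becomes invertible in $A_\sigma$, setting $\tilde y_k := \sigma^{-\ell_k} y_k \in e_iA_\sigma e_k$ yields $\tilde y_k p_k = \sigma^{-\ell_k}\sigma_i^{\ell_k} = e_i$ and $p_k\tilde y_k = \sigma^{-\ell_k}\sigma_k^{\ell_k} = e_k$ in $A_\sigma$.

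With the pairs in hand, bijectivity of $\psi$ becomes formal. For injectivity, $\psi(a) = 0$ forces $ap_k = 0$ for every $k$, whence $ae_k = ap_k\tilde y_k = 0$ and $a = \sum_k ae_k = 0$. For surjectivity, given $f \in \operatorname{End}_{R_\sigma}(A_\sigma e_i)$, set
$$a_f := \sum_{k \in Q_0} f(p_k)\tilde y_k \in A_\sigma.$$
Decomposing $m \in A_\sigma e_i$ as $m = \sum_l m_l$ with $m_l \in e_lA_\sigma e_i$, note that $\tilde y_k m_l = 0$ whenever $k \neq l$ (since $\tilde y_k \in e_iA_\sigma e_k$ and $e_ke_l = \delta_{kl}e_k$), so $a_f m = \sum_l f(p_l)\tilde y_l m_l$. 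Now $\tilde y_l m_l \in e_iA_\sigma e_i \cong R_\sigma$ equals $ze_i$ for a unique central $z \in R_\sigma$, and the chain
$$\bar{\tau}(zp_l) = \bar{\tau}(z)\bar{\tau}(p_l) = \bar{\tau}(\tilde y_l m_l)\bar{\tau}(p_l) = \bar{\tau}(p_l \tilde y_l m_l) = \bar{\tau}(e_l m_l) = \bar{\tau}(m_l),$$
together with injectivity of $\bar{\tau}$, gives $zp_l = m_l$ in $A_\sigma$. Centrality of $z$ and $R_\sigma$-linearity of $f$ then yield $a_f m_l = zf(p_l) = f(zp_l) = f(m_l)$, so $\psi(a_f) = f$. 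The main obstacle is the construction of $y_k$ in the second paragraph: one must verify that geodesicness is strong enough to guarantee the existence of the loop-closing companion path, an essentially topological statement (a form of strong connectedness of $Q^+$ realized by concatenations of geodesic-cycle lifts); once that is in place, everything else reduces to routine bookkeeping of central elements under $\bar{\tau}$.
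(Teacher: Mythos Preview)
Your argument is correct and takes a genuinely different route from the paper. The paper works endomorphism-by-endomorphism: given $f\in\operatorname{End}_R(Ae_i)$, it invokes Lemma \ref{endo lemma} (which in turn rests on Proposition \ref{arrow simple}) to attach to $f$ a monomial $g\in k[\mathcal S]$ with $\overbar{f(p)}=g\,\overbar p$, and then realizes $g$ as $\overbar{t}\sigma^{-\ell}$ for a path $t$ built by lifting to $Q^+$. Your approach is instead structural: by producing two-sided inverses $\tilde y_k$ to the $p_k$ you are effectively showing that $A_\sigma e_i$ is a free $R_\sigma$-module on $\{p_k\}_{k\in Q_0}$ (equivalently, that $A_\sigma\cong M_{|Q_0|}(R_\sigma)$ via $E_{k\ell}:=p_k\tilde y_\ell$), after which the endomorphism ring statement is immediate. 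This bypasses Lemma \ref{endo lemma} and Proposition \ref{arrow simple} entirely, at the cost of relying on Lemma \ref{birational} and Corollary \ref{injective} for the identification $e_iA_\sigma e_i\cong R_\sigma$. One minor remark: your $\bar\tau$-computation for $zp_l=m_l$ is unnecessary, since $p_l(\tilde y_l m_l)=(p_l\tilde y_l)m_l=e_l m_l=m_l$ and $p_l(ze_i)=zp_l$ directly. Also, the existence of the loop-closing $y_k^+$ needs only strong connectedness of $Q^+$, which the paper uses freely (e.g.\ in the proof of Lemma \ref{Z}.ii); geodesicness is not really the issue there. The trade-off is that the paper's Lemma \ref{endo lemma} is reusable for the finer Theorem \ref{endo} at noetherian points $\mathfrak m$, where $\sigma$ need not be inverted and your matrix-unit trick no longer applies.
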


\begin{proof}
Fix vertices $i,j \in Q_0$, and a left $R$-module endomorphism $f \in \operatorname{End}_R(Ae_i)$.
By the linearity of $f$, we may assume that the polynomial $g \in k[\mathcal{S}]$ corresponding to $f$ and $j$, as defined in Lemma \ref{endo lemma}, is a monomial.
Let $p \in e_jkQe_i$ be a path; then $q := f(p)$ is also a path since $\overbar{q} = g\overbar{p}$ is a monomial.
Let $t^+$ be a path in the cover $Q^+$ from $\operatorname{h}(p^+)$ to $\operatorname{h}(q^+)$.
Thus, by Lemma \ref{Z}.ii, there is an $\ell \in \mathbb{Z}$ such that
$$\overbar{tp} = \overbar{q} \sigma^{\ell}.$$
But then the path
$$t \sigma^{-\ell} \in e_kA_{\sigma}e_j$$
satisfies $\overbar{t}\sigma^{-\ell} = g$.
Therefore $f$ acts on $e_jA_{\sigma}e_i$ by left multiplication by $t \sigma^{-\ell}$.

Conversely, every element of $A_{\sigma}$ defines a left $R_{\sigma}$-module endomorphism of $A_{\sigma}e_i$ by left multiplication.
\end{proof}

\begin{Lemma} \label{al}
Suppose $A$ is geodesic.
If $p, q \in kQe_j$ are paths satisfying $\overbar{p} = \overbar{q}$, then $\operatorname{h}(p^+) = \operatorname{h}(q^+)$.
\end{Lemma}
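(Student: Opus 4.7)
Plan: I would lift $p$ and $q$ to paths $p^+, q^+$ in the covering quiver $Q^+$ starting at a common vertex $j^+ \in \pi^{-1}(j)$, and set $v_p := \operatorname{h}(p^+)$ and $v_q := \operatorname{h}(q^+)$. Arguing by contradiction, suppose $v_p \neq v_q$ and pick any path $s^+$ in $Q^+$ from $v_q$ to $v_p$, with projection $s := \pi(s^+)$. Since $s^+q^+$ and $p^+$ are paths in $Q^+$ with the same tail $j^+$ and head $v_p$, Lemma \ref{Z}.ii gives
\[
\overbar{s}\,\overbar{q} \;=\; \overbar{sq} \;\stackrel{\sigma}{=}\; \overbar{p} \;=\; \overbar{q}.
\]
Cancellation in the integral domain $k[\mathcal{S}]$ (using $\overbar{q} \neq 0$) yields $\overbar{s} \stackrel{\sigma}{=} 1$. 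Because $\overbar{s}$ is a monomial in the simple matchings with nonnegative exponents, we must have $\overbar{s} = \sigma^{\ell}$ for some integer $\ell \geq 0$.

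Next I would split into cases. If $\ell = 0$ then $\overbar{s} = 1$, forcing $s^+$ to be trivial and hence $v_p = v_q$, a contradiction. If $\ell \geq 1$ and $\pi(v_p) = \pi(v_q)$ in $Q$, then $s$ is a cycle at $\pi(v_q)$, and Proposition \ref{G} combined with $\overbar{s} \stackrel{\sigma}{=} 1$ forces $s$ to be contractible; then the lift $s^+$ is itself a cycle in $Q^+$, giving $v_p = v_q$, again a contradiction. Together these sub-cases handle every situation in which $s$ closes up in $Q$.

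The remaining case $\ell \geq 1$ with $\pi(v_p) \neq \pi(v_q)$ is where I expect the main obstacle to lie: here $s$ is a non-cycle path in $Q$ whose $\bar\tau$-image agrees with that of the cycle $u := \sigma_{\pi(v_q)}^{\ell}$, so Proposition \ref{G} is not directly available. To exclude this configuration I would invoke the geodesic hypothesis. Proposition \ref{arrow simple} guarantees every arrow of $s$ lies in a simple matching, so the rigid constraint $n_x(s) = \ell$ for every $x \in \mathcal{S}$ cuts out a very restricted class of arrow multisets. Using the subdivisions supplied by Lemma \ref{M} together with the injectivity of $\tau$ on $A$ from Corollary \ref{injective}, I would compare $\tau(s)$ with $\tau(u)$ as matrix entries: both carry the monomial $\sigma^{\ell}$ but occupy different matrix slots, and a matching-by-matching count traversing $s$ through each column or pillar of a fixed subdivision should force $\operatorname{t}(s) = \operatorname{h}(s)$ in $Q$, contradicting $\pi(v_p) \neq \pi(v_q)$. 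Completing this combinatorial step is the crux, after which the lemma follows.
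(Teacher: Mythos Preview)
Your argument tracks the paper's proof closely. The paper also picks a connecting path $r^+$ in $Q^+$ between $\operatorname{h}(p^+)$ and $\operatorname{h}(q^+)$, uses Lemma~\ref{Z}.ii together with $\overbar{p} = \overbar{q}$ to obtain $\overbar{r} = \sigma^{\ell}$, and then in a single sentence invokes Proposition~\ref{G} to conclude $\operatorname{t}(r^+) = \operatorname{h}(r^+)$. The paper does \emph{not} split into your three cases; in particular it does not separately treat the possibility $\operatorname{h}(p) \neq \operatorname{h}(q)$ in $Q$ (your case~3), and simply applies Proposition~\ref{G} to $r$ as though $r$ were already known to be a cycle.

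So you have correctly put your finger on a point that the paper's own proof leaves implicit. Your cases~1 and~2 are handled cleanly: case~1 is settled by Proposition~\ref{arrow simple} (no arrow has trivial $\bar{\tau}$-image), and case~2 is precisely the paper's one-line appeal to Proposition~\ref{G}. For case~3, however, your proposed resolution via subdivisions and a ``matching-by-matching count'' through columns and pillars is only a sketch: you indicate a strategy but do not carry it out, and it is not clear from what you wrote how the equalities $n_x(s) = \ell$ for all $x \in \mathcal{S}$ combine with a single subdivision to pin down $\operatorname{t}(s) = \operatorname{h}(s)$. In that sense your proposal has a genuine gap in exactly the place where the paper is terse --- but you have at least flagged it rather than glossed over it.
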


\begin{proof}
Suppose $p,q \in kQe_j$ satisfy $\overbar{p} = \overbar{q}$.
Let $r^+$ be a path in the cover $Q^+$ from $\operatorname{h}(p^+)$ to $\operatorname{h}(q^+)$.
Then $\overbar{rp} = \overbar{q} \sigma^{\ell}$ for some $\ell \in \mathbb{Z}$, by Lemma \ref{Z}.ii.
Whence $\overbar{r} = \sigma^{\ell}$ since $\overbar{p} = \overbar{q}$.
But then $\operatorname{t}(r^+) = \operatorname{h}(r^+)$ by Proposition \ref{G}.
\end{proof}

Recall the noetherian locus $U_{S/R}$ defined in (\ref{U}), which is an open dense subset of the algebraic variety $\operatorname{Max}S$ of the cycle algebra $S$.

\begin{Theorem} \label{endo}
Suppose $A$ is geodesic.
At each point $\mathfrak{m} \in \operatorname{Max}R$ which lifts to the noetherian locus $U_{S/R} \subseteq \operatorname{Max}S$, that is, for which the localization $R_{\mathfrak{m}}$ is noetherian, the localization $A_{\mathfrak{m}} := A \otimes_R R_{\mathfrak{m}}$ is an endomorphism ring over its center: for each $i \in Q_0$, we have
$$A_{\mathfrak{m}} \cong \operatorname{End}_{R_{\mathfrak{m}}}(A_{\mathfrak{m}}e_i).$$
\end{Theorem}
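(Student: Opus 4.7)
The plan is to mirror the proof of the preceding proposition, using a cycle $c$ through every vertex of $Q$ in place of the unit-cycle monomial $\sigma$. Using Lemma \ref{surjective}, lift $\mathfrak{m}$ to some $\mathfrak{n} \in U_{S/R}$, so that $R_\mathfrak{m} = S_\mathfrak{n}$ by Theorem \ref{depiction theorem}. The contrapositive of Lemma \ref{each vertex} supplies a cycle $c$ of $Q$ through every vertex with $\bar{c} \notin \mathfrak{n}$; as in the proof of Theorem \ref{depiction theorem}(ii), $\bar{c}$ then lies in $R$ and is a unit of $R_\mathfrak{m}$. Let $\hat{c}$ denote the corresponding central element of $A$ and set $c_v := \hat{c} e_v$ for each $v \in Q_0$; each $c_v$ is a cycle at $v$ through every vertex (obtained by rotation of $c$), and $c_v = \bar{c} \cdot e_v$ holds in $A_\mathfrak{m}$.

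The natural ring homomorphism $\Phi : A_\mathfrak{m} \to \operatorname{End}_{R_\mathfrak{m}}(A_\mathfrak{m} e_i)$ given by left multiplication is well-defined since $R$ is central. For surjectivity, decompose $f$ componentwise by $f_{k,j}(p) := e_k f(p)$ for $p \in e_j A_\mathfrak{m} e_i$, extract polynomials $g_{k,j}$ satisfying $\bar{\tau}(f_{k,j}(p)) = g_{k,j}\bar{\tau}(p)$ by Lemma \ref{endo lemma}, and reduce by $k$-linearity to the case that each $g_{k,j}$ is a monomial. For each $j$, factor $c_j$ at its first visit to $i$ to obtain $c_j = p_0 \cdot r$ with $p_0 \in e_j A e_i$ and $r \in e_i A e_j$; by injectivity of $\tau$ (Corollary \ref{injective}), it follows that $r p_0 = c_i$. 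With $q_0^{(k)} := f_{k,j}(p_0)$, which is a path since $g_{k,j}$ is a monomial, the candidate
$$b_{k,j} := q_0^{(k)} \cdot r \cdot \bar{c}^{-1} \in e_k A_\mathfrak{m} e_j$$
has $\bar{\tau}(b_{k,j}) = g_{k,j}$ and acts as $f_{k,j}$ on $p_0$ since $b_{k,j} p_0 = q_0^{(k)} c_i \bar{c}^{-1} = q_0^{(k)}$ using $c_i = \bar{c} \cdot e_i$. For a general $p \in e_j A_\mathfrak{m} e_i$, both $b_{k,j} p$ and $f_{k,j}(p)$ lie in the same component $e_k A_\mathfrak{m} e_i$ with common $\bar{\tau}$-image $g_{k,j}\bar{\tau}(p)$, and so coincide because $\tau$ extends injectively to $A_\mathfrak{m}$ via flatness of the localization $R \to R_\mathfrak{m}$. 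Summing gives $b := \sum_{j,k} b_{k,j} \in A_\mathfrak{m}$ with $\Phi(b) = f$.

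Injectivity of $\Phi$ is then a matrix-entry argument: decomposing $b = \sum_{u,v} b_{u,v}$ with $b_{u,v} \in e_u A_\mathfrak{m} e_v$, the hypothesis $\Phi(b) = 0$ forces each $b_{u,v}$ to annihilate every $y \in e_v A_\mathfrak{m} e_i$, and picking a nonzero $y_0$ (supplied, for instance, as a subpath of the cycle $c$) together with the domain property of the localization of $k[\mathcal{S}]$ yields $b_{u,v} = 0$. The main obstacle I foresee is justifying the extension of Lemma \ref{endo lemma} and its monomial reduction to the $R_\mathfrak{m}$-linear setting; the degenerate corner cases, where $i = j$ or the factorization of $c_j$ is trivial on one side, cause no difficulty because the identity $c_v = \bar{c} \cdot e_v$ in $A_\mathfrak{m}$ holds unconditionally.
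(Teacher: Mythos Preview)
Your approach is correct and takes a genuinely simpler route than the paper. Both proofs share the same opening moves: lift $\mathfrak{m}$ to $\mathfrak{n} \in U_{S/R}$, invoke the contrapositive of Lemma \ref{each vertex} to obtain a cycle through every vertex with $\bar{\tau}$-image outside $\mathfrak{n}$, and apply Lemma \ref{endo lemma} to extract a polynomial $g$ from $f$. They diverge in how the preimage in $A_{\mathfrak m}$ is manufactured. The paper builds an explicit path $t = t_4 t_3 t_2 t_1$: it first selects a simple matching $x$ with $x \nmid g\sigma^{-m}$, then picks $t_1$ so that the class $[t_1 p r]$ is componentwise dominated by $[qr]$ (using Theorem \ref{iff}), adjoins a cycle $t_3$ in the complementary class, and finally routes through the vertex $\ell = \operatorname{t}(t_3)$ via a cycle $t_4 t_2$ with $\overbar{t_4 t_2} \notin \mathfrak{n}$; Lemma \ref{al} is invoked to confirm the head is correct. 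You instead take the image $q_0^{(k)} = f_{k,j}(p_0)$ as a ready-made ingredient, set $b_{k,j} = q_0^{(k)} \cdot r \cdot \bar c^{\,-1}$ from the factorization $c_j = p_0 r$, and verify $b_{k,j}\, p = f_{k,j}(p)$ by comparing $\bar\tau$-images in the localized domain $k[\mathcal S] \otimes_R R_{\mathfrak m}$. This bypasses the class-matching machinery of Theorem \ref{iff} and Lemma \ref{al} entirely, trading them for the single observation that $\tau$ remains injective after the flat base change $R \to R_{\mathfrak m}$. The paper's argument is more explicitly constructive in that $t$ is described as a concrete path in $Q$; yours is shorter.

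The obstacle you flag is real but minor, and the paper glosses over it too (it fixes $f \in \operatorname{End}_R(Ae_i)$ rather than in $\operatorname{End}_{R_{\mathfrak m}}(A_{\mathfrak m} e_i)$). The proportionality $\overbar{e_k f(p)}/\overbar{p} = \overbar{e_k f(q)}/\overbar{q}$ in the proof of Lemma \ref{endo lemma} follows from $R$-linearity alone, hence from $R_{\mathfrak m}$-linearity, with values now in $\operatorname{Frac} k[\mathcal S]$; your argument needs only this ratio, not the divisibility conclusion $g \in k[\mathcal S]$, so the extension is immediate. Incidentally, your reduction to monomial $g_{k,j}$ is unnecessary: the same formula $b_{k,j} = f_{k,j}(p_0)\, r\, \bar c^{\,-1}$ works verbatim for arbitrary $g_{k,j}$, since injectivity of the localized $\tau$ does the comparison without any monomial hypothesis.
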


\begin{proof}
Fix $\mathfrak{m} \in \operatorname{Max}R$ for which $R_{\mathfrak{m}}$ is noetherian; vertices $i,j \in Q_0$; and a left $R$-module endomorphism $f \in \operatorname{End}_R(Ae_i)$.
By Theorem \ref{depiction theorem}, $S$ is a depiction of $R$, and thus there is an $\mathfrak{n} \in \operatorname{Max}S$ such that $\mathfrak{n} \cap R = \mathfrak{m}$ and $S_{\mathfrak{n}} = R_{\mathfrak{m}}$.

By the linearity of $f$, we may assume that the polynomial $g \in k[\mathcal{S}]$ corresponding to $f$ and $j$, as defined in Lemma \ref{endo lemma}, is a monomial.
Thus, by Lemma \ref{al} it suffices to show that there is a path $t \in kQe_j$ and cycle $s$ with $\overbar{s} \not \in \mathfrak{n}$, for which
$$g = \overbar{t} \overbar{s}^{-1} \in \bar{\tau}(e_{\operatorname{h}(t)}Ae_j) \otimes_R R_{\mathfrak{m}}.$$

Let $m \geq 0$ be maximum such that $\sigma^m \mid g$ and $\sigma^{m+1} \nmid g$.
If $g = \sigma^m$, then we may take $t = \sigma_{j}^m$ by Proposition \ref{G}.
So suppose there is a simple matching $x \in \mathcal{S}$ such that $x \nmid g \sigma^{-m}$.
Since $x$ is simple, there is a path $p \in e_jkQe_i$ such that $x \nmid \overbar{p}$.
Set $q := f(p)$ and $k := \operatorname{h}(q)$; then $\overbar{q} = \overbar{f(p)} = g \overbar{p}$.

Again since $x$ is simple, there are paths $r \in e_ikQe_k$ and $t_1 \in e_k kQe_j$ for which $x \nmid \overbar{r}$ and $x \nmid \overbar{t}_1$.
Then $t_1pr \in e_k kQ e_k$ is a cycle satisfying
\begin{equation} \label{t1pr}
x \nmid \overbar{t_1pr}.
\end{equation}
Furthermore,
$$x \nmid g\overbar{p}\overbar{r} \sigma^{-m} = \overbar{qr} \sigma^{-m}.$$
Thus, by Theorem \ref{iff} and the fact that $\overbar{pr} \mid \overbar{qr}$, we may choose $t_1$ so that each component of the class $[t_1pr] \in \mathbb{Z}^N$ satisfies
\begin{equation} \label{qk}
\operatorname{sign}([t_1pr]_{\nu}) = \operatorname{sign}( [qr]_{\nu}) \ \ \ \ \text{ and } \ \ \ \ |[t_1pr]_{\nu}| \leq |[qr]_{\nu}|,
\end{equation}
with $\nu \in [N]$.

Choose a cycle $t_3$ (at any vertex of $Q$) such that
\begin{itemize}
 \item[(i)] $[t_3] = [qr] - [t_1pr]$, and
 \item[(ii)] $\overbar{t}_3$ is minimally divisible by $\sigma$ such that (i) holds.
\end{itemize}
Set $\ell := \operatorname{t}(t_3)$.

Since $R_{\mathfrak{n}\cap R}$ is noetherian, there is a cycle $t_4 t_2 = t_4e_{\ell}t_2 \in e_k kQ e_k$ that passes through $\ell$ for which $\overbar{t_4t_2} \not \in \mathfrak{n}$, by Lemma \ref{each vertex}.
By (i) and Theorem \ref{iff} we have
$$\overbar{qr} = \overbar{t_3} \cdot \overbar{t_1pr} \sigma^n$$
for some $n \in \mathbb{Z}$.
Furthermore, by (ii), (\ref{t1pr}), and (\ref{qk}), we have $n \geq 0$.
Therefore, setting $t := t_4t_3t_2t_1$, we find
$$g = \frac{\overbar{q}}{\overbar{p}} = \frac{\overbar{t_4t_3t_2t_1p}}{\overbar{t_4t_2} \cdot \overbar{p}} = \frac{\overbar{t}}{\overbar{t_4t_2}} \in \bar{\tau}(e_kAe_j) \otimes_R R_{\mathfrak{m}}.$$
Consequently, $g$ is the $\bar{\tau}$-image of the cycle
$$\frac{t}{\overbar{t_4t_2}} \in e_k A_{\mathfrak{m}} e_j.$$

Conversely, every element of $A_{\mathfrak{m}}$ defines a left $R_{\mathfrak{m}}$-module endomorphism of $A_{\mathfrak{m}}e_i$ by left multiplication.
\end{proof}

\begin{Remark} \rm{
Theorem \ref{endo} is a generalization of the well known fact that cancellative dimer algebras $A$ on a torus are endomorphism rings: for each $i \in Q_0$, there is an algebra isomorphism $A \cong \operatorname{End}_R(Ae_i)$.
Indeed, a cancellative dimer algebra on a torus is a geodesic ghor algebra for which $U_{S/R} = \operatorname{Max}S = \operatorname{Max}R$ (noting that, in this case, $R = S$).
Thus Theorem \ref{endo} implies that for each $\mathfrak{m} \in \operatorname{Max}R$, the localization $A_{\mathfrak{m}}$ is an endomorphism ring, $A_{\mathfrak{m}} \cong \operatorname{End}_{R_{\mathfrak{m}}}(A_{\mathfrak{m}}e_i)$.
This in turn implies the isomorphism $A \cong \operatorname{End}_R(Ae_i)$.
}\end{Remark}

\ \\
\textbf{Acknowledgments.}
The authors were supported by the Austrian Science Fund (FWF) grant P 30549-N26.
The first author was also supported by the Austrian Science Fund (FWF) grant W 1230 and by a Royal Society Wolfson Fellowship.

\bibliographystyle{hep}
\def\cprime{$'$} \def\cprime{$'$}

\end{document}